\def\definetac{\newif\iftac}    
\else\usepackage{amsthm}\fi
\definecolor{darkgreen}{rgb}{0,0.45,0} 
\let\setof\Set
\let\ea\expandafter
\def\mdef#1#2{\ea\ea\ea\gdef\ea\ea\noexpand#1\ea{\ea\ensuremath\ea{#2}\xspace}}
\def\alwaysmath#1{\ea\ea\ea\global\ea\ea\ea\let\ea\ea\csname your@#1\endcsname\csname #1\endcsname
  \ea\def\csname #1\endcsname{\ensuremath{\csname your@#1\endcsname}\xspace}}
\DeclareRobustCommand\widecheck[1]{{\mathpalette\@widecheck{#1}}}
\def\@widecheck#1#2{%
    \setbox\z@\hbox{\m@th$#1#2$}%
    \setbox\tw@\hbox{\m@th$#1%
       \widehat{%
          \vrule\@width\z@\@height\ht\z@
          \vrule\@height\z@\@width\wd\z@}$}%
    \dp\tw@-\ht\z@
    \@tempdima\ht\z@ \advance\@tempdima2\ht\tw@ \divide\@tempdima\thr@@
    \setbox\tw@\hbox{%
       \raise\@tempdima\hbox{\scalebox{1}[-1]{\lower\@tempdima\box
\tw@}}}%
    {\ooalign{\box\tw@ \cr \box\z@}}}
\def\foreachletter#1#2#3{\foreachcount=#1
  \ea\loop\ea\ea\ea#3\@alph\foreachcount
  \advance\foreachcount by 1
  \ifnum\foreachcount<#2\repeat}
\def\foreachLetter#1#2#3{\foreachcount=#1
  \ea\loop\ea\ea\ea#3\@Alph\foreachcount
  \advance\foreachcount by 1
  \ifnum\foreachcount<#2\repeat}
\def\definescr#1{\ea\gdef\csname s#1\endcsname{\ensuremath{\mathscr{#1}}\xspace}}
\def\definecal#1{\ea\gdef\csname c#1\endcsname{\ensuremath{\mathcal{#1}}\xspace}}
\def\definebold#1{\ea\gdef\csname b#1\endcsname{\ensuremath{\mathbf{#1}}\xspace}}
\def\definebb#1{\ea\gdef\csname l#1\endcsname{\ensuremath{\mathbb{#1}}\xspace}}
\def\definefrak#1{\ea\gdef\csname f#1\endcsname{\ensuremath{\mathfrak{#1}}\xspace}}
\def\definebar#1{\ea\gdef\csname #1bar\endcsname{\ensuremath{\overline{#1}}\xspace}}
\def\definetil#1{\ea\gdef\csname #1til\endcsname{\ensuremath{\widetilde{#1}}\xspace}}
\def\definehat#1{\ea\gdef\csname #1hat\endcsname{\ensuremath{\widehat{#1}}\xspace}}
\def\definechk#1{\ea\gdef\csname #1chk\endcsname{\ensuremath{\widecheck{#1}}\xspace}}
\def\defineul#1{\ea\gdef\csname u#1\endcsname{\ensuremath{\underline{#1}}\xspace}}
\def\autofmt@n#1\autofmt@end{\mathrm{#1}}
\def\autofmt@b#1\autofmt@end{\mathbf{#1}}
\def\autofmt@l#1#2\autofmt@end{\mathbb{#1}\mathsf{#2}}
\def\autofmt@c#1#2\autofmt@end{\mathcal{#1}\mathit{#2}}
\def\autofmt@s#1#2\autofmt@end{\mathscr{#1}\mathit{#2}}
\def\autofmt@f#1\autofmt@end{\mathfrak{#1}}
\def\autofmt@u#1\autofmt@end{\underline{\smash{\mathsf{#1}}}}
\def\autofmt@U#1\autofmt@end{\underline{\underline{\smash{\mathsf{#1}}}}}
\def\autofmt@h#1\autofmt@end{\widehat{#1}}
\def\autofmt@r#1\autofmt@end{\overline{#1}}
\def\autofmt@t#1\autofmt@end{\widetilde{#1}}
\def\autofmt@k#1\autofmt@end{\check{#1}}
\def\auto@drop#1{}
\def\autodef#1{\ea\ea\ea\@autodef\ea\ea\ea#1\ea\auto@drop\string#1\autodef@end}
\def\@autodef#1#2#3\autodef@end{%
  \ea\def\ea#1\ea{\ea\ensuremath\ea{\csname autofmt@#2\endcsname#3\autofmt@end}\xspace}}
\def\autodefs@end{blarg!}
\def\autodefs#1{\@autodefs#1\autodefs@end}
\def\@autodefs#1{\ifx#1\autodefs@end%
  \def\autodefs@next{}%
  \else%
  \def\autodefs@next{\autodef#1\@autodefs}%
  \fi\autodefs@next}
\DeclareSymbolFont{bbold}{U}{bbold}{m}{n}
\DeclareSymbolFontAlphabet{\mathbbb}{bbold}
\mdef\delbar{\overline{\partial}}
\newcommand{\inv}{^{-1}}
\mdef\hf{\textstyle\frac12 }
\mdef\thrd{\textstyle\frac13 }
\mdef\qtr{\textstyle\frac14 }
\mdef\Id{\mathrm{Id}}
\mdef\id{\mathrm{id}}
\mdef\ten{\mathrel{\otimes}}
\mdef\sqten{\mathrel{\boxtimes}}
\mdef\we{\overset{\sim}{\longrightarrow}}
\mdef\leftwe{\overset{\sim}{\longleftarrow}}
\let\maps\colon
\let\xto\xrightarrow
\def\rightarrowtailfill@{\arrowfill@{\Yright\joinrel\relbar}\relbar\rightarrow}
\newcommand\xrightarrowtail[2][]{\ext@arrow 0055{\rightarrowtailfill@}{#1}{#2}}
\def\twoheadrightarrowfill@{\arrowfill@{\relbar\joinrel\relbar}\relbar\twoheadrightarrow}
\newcommand\xtwoheadrightarrow[2][]{\ext@arrow 0055{\twoheadrightarrowfill@}{#1}{#2}}
\def\slashedarrowfill@#1#2#3#4#5{%
  $\m@th\thickmuskip0mu\medmuskip\thickmuskip\thinmuskip\thickmuskip
   \relax#5#1\mkern-7mu%
   \cleaders\hbox{$#5\mkern-2mu#2\mkern-2mu$}\hfill
   \mathclap{#3}\mathclap{#2}%
   \cleaders\hbox{$#5\mkern-2mu#2\mkern-2mu$}\hfill
   \mkern-7mu#4$%
}
\def\rightslashedarrowfill@{%
  \slashedarrowfill@\relbar\relbar\mapstochar\rightarrow}
\newcommand\xslashedrightarrow[2][]{%
  \ext@arrow 0055{\rightslashedarrowfill@}{#1}{#2}}
\mdef\hto{\xslashedrightarrow{}}
\mdef\htoo{\xslashedrightarrow{\quad}}
\long\def\my@drawfill#1#2;{%
\@skipfalse
\fill[#1,draw=none] #2;
\@skiptrue
\draw[#1,fill=none] #2;
}
\newif\if@skip
\newcommand{\skipit}[1]{\if@skip\else#1\fi}
\newcommand{\drawfill}[1][]{\my@drawfill{#1}}
\newif\ifhyperref
  \let\your@state\state
  \def\state#1{\gdef\currthmtype{#1}\your@state{#1}}
  \let\your@staterm\staterm
  \def\staterm#1{\gdef\currthmtype{#1}\your@staterm{#1}}
  \let\defthm\newtheorem
  \def\currthmtype{}
    \def\autoref#1{\ref*{label@name@#1}~\ref{#1}}
    \def\autoref#1{\ref{label@name@#1}~\ref{#1}}
    \let\old@label\label%
    \def\label#1{%
      {\let\your@currentlabel\@currentlabel%
        \edef\@currentlabel{\currthmtype}%
        \old@label{label@name@#1}}%
      \old@label{#1}}
    \def\defthm#1#2{%
      \newtheorem{#1}{#2}[section]%
      \expandafter\def\csname #1autorefname\endcsname{#2}%
      \expandafter\let\csname c@#1\endcsname\c@thm}
    \def\defthm#1#2{\newtheorem{#1}[thm]{#2}}
\let\SK@label\label\fi
    \let\old@label\label
    \let\your@thm\@thm
    \def\@thm#1#2#3{\gdef\currthmtype{#3}\your@thm{#1}{#2}{#3}}
    \def\currthmtype{}
    \def\label#1{{\let\your@currentlabel\@currentlabel\def\@currentlabel%
        {\currthmtype~\your@currentlabel}%
        \SK@label{#1@}}\old@label{#1}}
    \def\autoref#1{\ref{#1@}}
\newtheorem{thm}{Theorem}[section]
\iftac\theoremstyle{plain}\else\theoremstyle{definition}\fi
\iftac\theoremstyle{plain}\else\theoremstyle{remark}\fi
\def\thmqedhere{\expandafter\csname\csname @currenvir\endcsname @qed\endcsname}
  \let\c@equation\c@subsection
  \let\c@equation\c@thm
\numberwithin{equation}{section}
\mdef\ep{\varepsilon}
\mdef\ph{\varphi}
\let\al\alpha
\let\be\beta
\let\de\delta
\let\om\omega
\let\ze\zeta
\let\th\theta
\let\Om\Omega
\def\xihat{\widehat{\xi}}
\title{The Shape of Infinity}
\author{Michael Shulman}
\mdef\vd{{\vec{d}}}
\mdef\Ud{{\overline{d}}}
\mdef\ud{{\underline{d}}}
\newcommand\frc[2]{{\textstyle\frac{#1}{#2}}}
\begin{document}
\maketitle

\section{First Introduction: for students}
\label{sec:introduction}

What is there at infinity?

Well, it depends on what there is \emph{not} at infinity.  Consider
first a plane; what is there at the infinity of a plane?

\textbf{Answer \#1:} a point.  In \emph{stereographic
  projection} we place a sphere
tangent to the plane (at the origin, say) and draw lines from the
north pole, mapping each point on the plane to the point where the
corresponding line intersects the sphere.  Now the plane has been
transformed into the sphere minus the North pole, and the North pole
is a `point at infinity'.

This is cool, but it's also weird in some ways.  Since there is just
one point at infinity, if I go off to infinity in one direction I can
then stand at infinity, turn around, and come back from infinity from
any other direction I want to.  Shouldn't, maybe, \emph{this} infinity
be a little different from \emph{that} infinity?

\textbf{Answer \#2:} a circle.  We can add one point in each
direction.  A way to visualize this, similar to stereographic
projection, is \emph{hemispherical central projection}, drawing lines
from the center of the sphere.  Now the plane is transformed into the
Southern hemisphere, and the equator is a circle at infinity.  Here we
have one point for each direction; for instance, parallel lines, or
asymptotic curves, meet off at infinity.

Actually, however, parallel lines now meet in \emph{two} points: one
in each direction.  One might argue that one purpose for adding points
at infinity is to make parallel lines not so different from
intersecting ones (because they should intersect ``at infinity''), but
surely having them intersect in two points makes them rather different
from other lines that intersect only in one point.

Moreover, in this model the points at infinity are weird in another
way: they are different from all other points.  If I go off in one
direction and end up at a point at infinity, I can't keep going any
further in that direction; I have to turn around and come back.  Maybe
this is the way you think of infinity, as a `boundary' of space, but
it's also nice to have all points of the resulting space be the same.

\textbf{Answer \#3:} a `line'.  We do spherical central projection
again, but now we consider a point and its antipodal point to be the
`same' point of the resulting space.  Now the two points at infinity
in opposite directions are actually the same point.  This is called
the \emph{projective plane}, in which parallel lines intersect at a
single point at infinity, making them just like all other pairs of
lines that intersect in a single finite point.  And if I go to
infinity in one direction, I can keep on going and come back from the
opposite direction.

That means, by the way, that all the `lines' in projective space are
actually circles, since they close up at infinity.  Thus, the circle
at infinity can be called a `(projective) line' at infinity, and
usually is.

\medskip Now, what about spaces other than the plane?  We can do similar
things for 3-space or $n$-space, of course.  But what about, say, an
infinite cylinder?
\begin{itemize}
\item We could add a single point at infinity, getting a `pinched torus'.
\item We could add a circle at infinity, getting an ordinary torus.
\item We could add a circle at infinity in a different way to get a
  Klein bottle.
\item We could add one point at infinity in each direction,
  getting---well, actually it's topologically a sphere.
\item We could add a circle at infinity in each direction, getting a
  finite cylinder with boundary.
\end{itemize}

What about a sphere?  How could we add points at infinity to it?  A
torus?  There isn't any place to add points at infinity to these
spaces, because there is no way to ``get off to infinity'' from them.
We say they are already \textbf{compact}, like the projective plane or
the closed hemisphere, and our process of adding points at infinity is
called \textbf{compactification}.

Often in mathematics, when there are many ways to do something, it is
useful to look for the \emph{best} possible way.  Usually if there is
a best possible way, then it contains the most information.  We talk
about these ``best ways'' having a \emph{universal property}.

Here, we can see that different compactifications ``distinguish''
different points at infinity.  The stereographic projection sees only
one point at infinity.  The projective plane sees a whole projective
line at infinity, making distinctions between different ways to go off
to infinity that the stereographic compactification can't tell the
difference between.  The hemispherical compactification makes even
more distinctions: it can tell the difference between going off to
infinity in exactly opposite directions.  We can even see
geometrically the process of ``losing information'' as we pass between
these compactifications.  If we start from a hemisphere and squash all
the equator together into a point, we end up with a sphere.

Thus, it is natural to think that the \emph{best} possible
compactification, in a formal sense, would be one which makes the
\emph{most possible} distinctions between points at infinity.  This
would give us the most possible information about how a space behaves
as we go off to infinity, and would let us recover any other
compactification by squashing some points together.  For instance, can
we do any better than the hemispherical compactification?  Can we
distinguish between, say, going off to infinity as $x\to \infty$ along
the lines $y=0$ and $y=1$?

Our goal in these notes is to construct this ``best possible''
compactification of any space, which is called the
\textbf{Stone-\v{C}ech compactification}.  In order to do this, we'll
need to define ``spaces'' in a very general sense, and make precise
what we mean by ``compact''.

\section{Second introduction: for experts}
\label{sec:expert-intro}

\emph{If you already know what a topological space is, and perhaps
  have seen the Stone-\v{C}ech compactification before, then you may
  be interested in the following comments.  If not, then this section
  probably won't make a whole lot of sense, so I recommend you skip
  ahead to \S\ref{sec:gauge-spaces} on
  page~\pageref{sec:gauge-spaces}.}

\medskip

The goal of these notes is to develop the basic theory of the
Stone-\v{C}ech compactification without reference to open sets, closed
sets, filters, or nets.  In particular, this means we cannot use any
of the usual definitions of topological space.  This may seem like
proposing to run a marathon while hopping on one foot, but I hope to
convince you that it is easier than it may appear, and not devoid of
interest.

These notes began as a course taught at Canada/USA
Mathcamp\footnote{\url{http://www.mathcamp.org}}, a summer program for
mathematically talented high school students.  Mathcampers are generally
very quick and can handle quite abstract concepts, but are
(unsurprisingly) lacking in formal mathematical background.  Thus,
reducing the prerequisite abstraction is important when designing a
Mathcamp class.

It is common in introductory topology courses to use \emph{metric
  spaces} instead of, or at least prior to, abstract topological
spaces.  The idea of ``measuring the distance between points'' is
generally regarded as more intuitive than abstract notions such as
``open set'' or ``closed set''.  In particular, metric spaces can be
used to bridge the gap between $\ep$-$\de$ notions of continuity that
students may have seen in calculus and an abstract definition in terms
of open sets.

The main disadvantage of metric spaces, compared to topological ones,
is that they are not very general.  In particular, Stone-\v{C}ech
compactifications are not usually metrizable, so metric spaces alone
are unsuitable for our purposes here.  However, I was pleased to
discover a notion which I think is not much harder to understand than
a metric space, but which is significantly more general: a set
equipped with a \emph{family} of metrics.\footnote{In these notes I
  depart from the venerable tradition according to which all metrics
  are Hausdorff.  I prefer the philosophy of point-set topology, that
  separation properties should be considered as 
  \emph{properties}, rather than part of a definition.  Thus, my
  ``metrics'' are what are traditionally called ``pseudometrics''.}

In these notes, I have called such a set (plus an inessential closure
condition on its metrics) a \emph{gauge space}.  Some authors call
them \emph{(pseudo)metric uniformities}, due to the amazing theorem (a
version of Urysohn's lemma) that any \emph{uniform space} can be
presented using a family of metrics.\footnote{Uniform spaces,
  sadly often lacking from undergraduate and even graduate curricula,
  are to uniformly continuous functions as topological spaces are to
  continuous functions.  References
  include~\cite{james:top-unif,james:intro-unif,page:uniform,howes:maat}.}
In particular, since all completely regular spaces are uniformizable,
they are also ``gaugeable''.  Thus gauge spaces are a much wider class
than metric spaces: they include in particular all compact
Hausdorff spaces, and thus all Stone-\v{C}ech compactifications.
Pleasingly, complete regularity is also precisely the necessary
condition for a space to embed into its Stone-\v{C}ech
compactification, so this is a very appropriate level of generality
for us.

Gauge spaces are also \emph{better} than topological spaces, in that
they automatically give us more structure than a topology: they give a
uniformity.  Thus we can discuss ``uniform'' concepts such as uniform
continuity and completeness, at a much more general level than that of
metric spaces, but using \ep-\de definitions that are again just the same
as the standard ones for real numbers that students may have seen
before.

(I also believe there is something to be said for gauge spaces over
the more classical ``entourage'' or ``covering'' notions of uniformity
even aside from pedagogy.  In particular, the somewhat obscure and
difficult ``star-refinement'' property of a uniformity is replaced by
the simple operation of dividing $\ep$ by two.)

The presence of uniform structure in a gauge space also provides a
solution to the next problem which arises: how to define compactness.
Compactness is also traditionally a difficult concept for
students.  In a metric space, compactness is equivalent to sequential
compactness, which is easier to understand; but in a gauge space an
analogous statement would have to involve nets or filters, which are
themselves difficult concepts.

Fortunately, we can take an end run around the whole issue, because
compactness of a uniform space is also equivalent to the conjunction
of \emph{total boundedness} and \emph{completeness}.  Interestingly,
these are both uniform rather than topological properties, but their
conjunction is equivalent to the topological property of compactness.
This description of compactness also provides a pleasing way to
construct compactifications (including the Stone-\v{C}ech
compactification): pass to a topologically equivalent totally bounded
gauge, then complete it.

Total boundedness is not so hard to understand as a strong form of
being ``finite in extent'': no matter how small a mesh we want to draw
on our space, we only need finitely many grid points.  But the final
hurdle is defining completeness and completion.  Completeness of
metric spaces is usually defined using Cauchy sequences, while for
uniform spaces one has to use Cauchy nets or filters
instead---something we wanted to avoid!

The solution to this last puzzle comes from Lawvere's
description~\cite{lawvere:metric-spaces} of metric spaces as enriched
categories, and completeness as representability of certain
profunctors (or equivalently, existence of certain weighted limits).
This involves no Cauchy sequences: instead a ``Cauchy point'' is
specified simply by giving what its distances to all actual points
ought to be, plus some very obvious axioms.  It has the further
advantage that no passage to equivalence classes is necessary in
constructing the completion (quotient sets being another traditionally
difficult concept for students).

Clementino, Hofmann, and
Tholen~\cite{ch:top-laxalg,met-top-unif-approach} have shown that
uniform spaces, and their completions~\cite{ch:lawverecplt}, can be
described in a framework which generalizes Lawvere's.  I have not been
able to find such a framework which reproduces gauge spaces exactly,
although they are a special case of the \emph{prometric} spaces
of~\cite{met-top-unif-approach}.  However, a naive generalization of
Lawvere's completeness criterion from metric spaces to gauge spaces
seems to work quite well.

This, then, is how we can define the Stone-\v{C}ech compactification
$\be X$ without ever mentioning open and closed sets, nets, or
filters: work with gauge spaces, define compactness to mean total
boundedness plus completeness, construct completions using
Lawvere-style Cauchy points, and build $\be X$ by completing an
appropriate totally bounded gauge.  The astute reader will notice that
filters make a somewhat disguised appearance in
\S\ref{sec:points-in-betaX}, but overall I believe I can declare
victory in the stated aim of avoiding point-set topological notions.

It turns out that this construction of $\be X$ is also convenient for
proving basic facts about compactness, such as the fact that it is a
topological property (which is not obvious from the definition we are
using!)  I have left this and other interesting facts to the reader as
exercises---of which there are many.  The starred exercises are
generally the more difficult ones (though there is a wide variety of
difficulty within the starred exercises).  But all of the exercises
should be doable using only the concepts and results that have been
introduced in the paper up to that point (perhaps including previous
exercises).

One last unusual concept that appears in these notes is that of
\emph{proximity}.  This is a level of structure that lies in between
uniformity and topology, and can be studied abstractly in its own
right~\cite{nw:proximity}.  In fact, a proximity is essentially the
same as a totally bounded uniformity, and every uniformity has an
``underlying'' proximity which can be thought of as its
``totally-boundedification''.  Proximity is a natural concept to
introduce when discussing compactifications, because totally bounded
uniformities compatible with a given topological space (or
equivalently proximities compatible with such a space) are equivalent
to compactifications thereof.  Proximity is also a natural
midway-point when trying to motivate the notion of ``topology'' or
``continuity'', starting from metric or gauge notions.  However, to
avoid too much proliferation of concepts (and because the notion of
proximity is somewhat weird to think about), I have mostly relegated
proximity notions to the exercises.

\section{Gauge spaces}
\label{sec:gauge-spaces}

A space has a set of points, but it is more than that.  What is it
that makes the plane cohesive, rather than just a collection of
points?  We need to have a way to judge how close together or far
apart points are.  Here's a natural such abstract notion.

\begin{defn}
  A \textbf{metric} on a set $X$ is a function $d\maps X\times X\to
  [0,\infty)$ such that
  \begin{enumerate}
  \item $d(x,x)=0$ for each $x\in X$ (\emph{reflexivity}).
  \item $d(x,y)=d(y,x)$ for each $x,y\in X$ (\emph{symmetry}).
  \item $d(x,y)+d(y,z)\ge d(x,z)$ for each $x,y,z\in X$
    (\emph{transitivity} or the \emph{triangle inequality}).
  \end{enumerate}
  A space equipped with a metric is called a \textbf{metric space}.
\end{defn}

Reflexivity says that a point is as close as possible to itself.
Symmetry says that if $x$ is close to $y$, then $y$ is just as close
to $x$.  Transitivity says that if $x$ is close to $y$ and $y$ is
close to $z$, then $x$ is close to $z$.

\begin{egs}\ 
  \begin{itemize}
  \item \lR\ with $d(x,y) = |x-y|$.  You are probably familiar with
    the triangle inequality in this case.
  \item $\lR^2$ with $d(x,y) = \sqrt{(x_0-y_0)^2+(x_1-y_1)^2}$.  I use
    $x$ and $y$ for points, and natural numbers to label coordinates.
    This is where the triangle inequality gets its name.
  \item More generally, $\lR^n$.
  \item Let $d$ be a metric on $X$ and let $A$ be a subset of $X$.
    Then $d|_{A\times A}$ is a metric on $A$.
  \item This includes $S^2 = \{(x,y,z) \mid x^2+y^2+z^2=1\}\subset
    \lR^3$, for example.
  \item Any set $X$ with $d(x,y)=0$ for all $x,y\in X$.  This is
    called the \emph{indiscrete} metric; all the points are squished
    so close together as to be indistinguishable.
  \item Any set $X$ with \[d(x,y) =
    \begin{cases}
      0 & x=y \\ 1 & x\neq y.
    \end{cases}
    \]
    Here no two distinct points are close together at all.  This is
    called the \emph{discrete} metric.
  \end{itemize}
\end{egs}

We say a metric is \textbf{separated} (or \emph{Hausdorff}) if
$d(x,y)=0$ only when $x=y$.  Many people include this in the
definition of a metric (calling our more general metrics
``pseudometrics'').

A good way to get a feel for a particular metric is to look at
\emph{balls} around points.  Given a point $x\in X$ and a real number
$\ep>0$, we define the \textbf{(open) ball} around $x$ of radius \ep\
to be the set
\[B_d(x,\ep) = \setof{y\in X | d(y,x)<\ep}.
\]
In $\lR^2$, a ball is the interior of a circle centered at $x$ with
radius \ep.  In \lR, it is an open interval centered at $x$ of length
$2\ep$.  In the indiscrete metric, every open ball is the whole space.
In the discrete metric, every open ball is just a single point.  Here
are some other examples:

\begin{itemize}
\item Consider $\lR^2$ with $d'(x,y) = |x_0-y_0| + |x_1-y_1|$.  This
  is also a perfectly good metric, different from the usual one.  It
  is sometimes called the \emph{taxicab} or \emph{Manhattan metric}.
  Now an open ball around $x$ is a diamond-shape.
\item Another metric on $\lR^2$ is $d''(x,y) =
  \max(|x_0-y_0|,|x_1-y_1|)$.  Now an open ball around $x$ is a square
  centered at $x$ with side length $2\ep$.
\end{itemize}

Now a set equipped with a metric is a good notion of `space', but not
quite good enough for our purposes.  Here is one example of why.  We
have seen that $\lR^n$ has a natural metric for all finite $n$, but
what about
\[\lR^\om = \{ (x_0,x_1,x_2,\dots)\mid x_i \in \lR\}?\]
Our natural inclination is to write
\[d(x,y) = \sqrt{(x_0-y_0)^2+(x_1-y_1)^2 + \dots}\]
but of course this makes no sense; what is
\[d\Big((0,0,0,0,\dots), (1,1,1,1,\dots)\Big)?\]
We could try
\[d(x,y) = \max(|x_0-y_0|,|x_1-y_1|,\dots)\]
but then what about
\[d\Big((0,0,0,0\dots), (1,2,3,4\dots)\Big)?\]
And yet, it does seem to make sense to talk about two points in
$\lR^\om$ being close together.  Two points in $\lR^n$ are close
together when all their coordinates are close together, so it makes
sense to say the same in $\lR^\om$.

In fact, $\lR^\om$ can be given sensible metrics.  One possibility is
to choose a function $g:[0,\infty) \to [0,1)$ with the property that
$g(a+b) \le g(a) + g(b)$, and define
\begin{equation}
  d_\infty(x,y) = \sup_i g(|x_i - y_i|).\label{eq:dom1}
\end{equation}
Another somewhat fancier possibility is
\begin{equation}
  d_\Sigma(x,y) = \sum_i g(|x_i - y_i|) \cdot 2^{-i}.\label{eq:dom2}
\end{equation}
One $g$ which works is $g(a) = \min(a,1)$; I'll generally use this one
in the future.

Note that~\eqref{eq:dom1} still makes sense if we replace $\om$ by an
\emph{uncountable} set, but~\eqref{eq:dom2} does not.  On the other
hand, we will see in the next section that~\eqref{eq:dom2} is
``topologically correct'' but not ``uniformly correct'',
whereas~\eqref{eq:dom1} is not even topologically correct.  Therefore,
to treat the uncountable case correctly, we need to generalize the
notion of metric space.  This generalization will also be necessary
for the Stone-\v{C}ech compactification, later on.


If $d,d'$ are two metrics on a set $X$, we say that $d\ge d'$ if for
all $x,y\in X$, $d(x,y)\ge d'(x,y)$.

\begin{defn}
  A \textbf{gauge} on a set $X$ is a nonempty set $\cG = \{d_\alpha\}$
  of metrics on $X$, called \textbf{gauge metrics} or
  \textbf{$X$-metrics}, such that
  \begin{equation}
    \parbox{4in}{for any two gauge metrics $d_1$ and $d_2$, there exists a
      gauge metric $d_3$  with  $d_3\ge d_1$ and $d_3\ge d_2$.}\label{eq:filt}\tag{$\ast$}
  \end{equation}
  A \textbf{gauge space} is a set $X$ equipped with a gauge \cG; we
  write it as $(X,\cG)$ or just $X$ if there is no danger of confusion.
\end{defn}

In particular, a metric space is a gauge space.  However, we also have
other examples.
\begin{itemize}
\item Consider $\lR^2$ with three metrics: $d_0(x,y) = |x_0-y_0|$,
  $d_1(x,y) = |x_1-y_1|$, and $d_{01}(x,y) =
  \max(|x_0-y_0|,|x_1-y_1|)$.  This generalizes easily to a gauge on
  $\lR^n$.
\item We equip $\lR^\om$ with a countably infinite set of metrics
  defined by \[d_N(x,y) = \max_{1\le i\le N}|x_i-y_i|.\]
\end{itemize}

The intuition is that two points in a gauge space are close when the
distance between them is small in \emph{all} the metrics.  This is why
the above gauge on $\lR^\om$ gives what we want.  For example, we call
a gauge space \textbf{separated} (or \emph{Hausdorff}) if whenever
$d(x,y)=0$ for \emph{all} gauge metrics $d$, then $x=y$.

In a gauge space, by an \textbf{open ball} we mean simply an open ball
with respect to \emph{some} metric in the gauge.  We write
$B_d(x,\ep)$ to specify the metric $d$ being used.

We call condition~\eqref{eq:filt} the \emph{filteredness property}.
We can omit it (and some people do), but in that case there is a
``whack-a-mole'' behavior that comes into play: in some other places
the definitions become more complicated.  We'll come back to it later;
for now we note that given two open balls $B_{d_1}(x,\ep_1)$ and
$B_{d_2}(x,\ep_2)$, the filteredness condition ensures that we can
find a metric $d_3$ such that $B_{d_3}(x, \min (\ep_1,\ep_2))$ is
contained in both $B_{d_1}(x,\ep_1)$ and $B_{d_2}(x,\ep_2)$.

Note that any set \cM of metrics \emph{generates} a gauge by adding to
it the metrics $\max (d_0,\dots, d_n)$ for any finite set of metrics
$d_0,\dots, d_n \in \cM$.  But we can't we allow infinite subsets of
\cM, because then the maximum might not exist---even a supremum might
not exist!  We already had this problem with $\lR^\om$.  We could try
to circumvent it as in~\eqref{eq:dom1}, but we'll see in the next
section that this would be the wrong thing to do.

\section{Topology}
\label{sec:topology}

Now, when we did stereographic projection, we identified the plane
with the points on the sphere that their lines meet.  But that
bijection does \emph{not} preserve distance!  What does it preserve?
A word for it is \emph{topology}, but what does that mean?  We'd like
to say that it preserves ``infinite closeness,'' but no two points can
be infinitely close without being at distance zero (which, in most
spaces, is only possible if they are the same point).  However,
\emph{sets} can be infinitely close without being identical (or even
intersecting).



\begin{defn}
  For $d$ a metric on $X$ and nonempty subsets $A,B\subseteq X$ we define
  \[ d(A,B) = \inf_{a\in A, b\in B} d(a,b) \]
  For $x\in X$ we write
  \[ d(x,B) = d(\{x\},B) \]
  In a gauge space, we write $A\approx B$ if $d(A,B)=0$ for all gauge
  metrics $d$.  By convention, we say $\emptyset \not\approx A$ for all
  sets $A$.
\end{defn}

Evidently if $A\cap B \neq \emptyset$, then $A\approx B$.  Can you think
of an example of two sets with $A\cap B = \emptyset$ and $A\approx B$?

\begin{egs}
  \begin{itemize}
  \item Let $A$ be the open disc in the plane with center $(-1,0)$ and
    radius $1$, and $B$ the similar disc with center $(1,0)$.
  \item Let $A$ be the curve $y=1/x$ in the plane, and $B$ the curve $y=-1/x$.
  \item Let $A$ be the set $\setof{1,\frac12,\frac13, \frac14,\dots}$
    and $B$ the set $\setof{0}$.
  \end{itemize}
\end{egs}

\begin{lem}
  $A\approx B$ iff for every gauge metric $d$ and every $\ep>0$, there
  exists $a\in A$ and $b\in B$ with $d(a,b)< \ep$.
\end{lem}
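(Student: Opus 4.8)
The plan is to recognize this as nothing more than the standard $\ep$-characterization of when an infimum of nonnegative reals equals zero, applied separately to each gauge metric. Unwinding the definitions, $A\approx B$ asserts that $d(A,B)=0$ for every gauge metric $d$, where $d(A,B)=\inf\setof{d(a,b) | a\in A,\,b\in B}$. Both sides of the biconditional are universally quantified over gauge metrics $d$ (the right-hand side reads ``for all gauge $d$, for all $\ep>0$, there exist $a,b$ with $d(a,b)<\ep$''), so it suffices to fix a single $d$ and prove that $d(A,B)=0$ holds if and only if for every $\ep>0$ there exist $a\in A$ and $b\in B$ with $d(a,b)<\ep$; quantifying over all $d$ then gives the lemma.

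For the forward direction I would assume $d(A,B)=0$ and fix $\ep>0$. Since $d(A,B)$ is by definition the greatest lower bound of $\setof{d(a,b) | a\in A,\,b\in B}$ and $0<\ep$, the number $\ep$ cannot itself be a lower bound for this set; hence some value $d(a,b)$ lies strictly below $\ep$, producing the desired $a$ and $b$. For the converse I would use that each $d(a,b)$ is nonnegative, so $d(A,B)\ge 0$ automatically, and that the hypothesis lets me choose, for each $\ep>0$, points $a,b$ with $d(a,b)<\ep$, whence $d(A,B)\le d(a,b)<\ep$. As $0\le d(A,B)<\ep$ for every $\ep>0$, I conclude $d(A,B)=0$.

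I expect no real obstacle, since the argument is purely a restatement of the definition of infimum; the only point worth a word of care is the edge case in which $A$ or $B$ is empty. There the infimum ranges over an empty index set and is read as $+\infty$, so $d(A,B)=0$ fails and $A\not\approx B$, in agreement with the stated convention, while the right-hand condition also fails for lack of any $a\in A$ (or $b\in B$). Thus both sides are false and the equivalence holds vacuously, so the reasoning above needs no modification beyond observing that all the content lives in the nonempty case.
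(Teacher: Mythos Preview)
Your argument is correct; it is precisely the routine unwinding of ``$\inf = 0$'' that the paper has in mind when it writes ``Easy.'' as its entire proof. Your handling of the empty case is also consistent with the paper's convention.
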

\begin{proof}
  Easy.
\end{proof}

Does the stereographic bijection preserve $\approx$?  No; two parallel
lines in $\lR^2$ have $d(A,B)$ positive (their distance apart) but
their stereographic projections are at distance 0.  But it \emph{does}
preserve $\approx$ between a \emph{point} and a set (though this may
not be obvious yet).  This leads us to separate the following two notions.

\begin{itemize}
\item The \textbf{proximity} of a gauge space is the relation $A\approx
  B$ on pairs of subsets.
\item The \textbf{topology} of a gauge space is the relation $x\approx
  A$ on (point, subset) pairs.
  (We write $x\approx A$ to mean $\setof{x}\approx A$.)
\end{itemize}
When we say that a concept or definition is \emph{topological}, we
mean that it depends only on the relation ``$x\approx A$'' between
points and subsets.  Similarly, we can talk about a concept or
definition being \emph{proximal}.

Two different gauges on a set can define the same topology or
proximity.  In the case of topologies, there is a nice criterion for
this.

\begin{lem}
  Two gauges \cG and $\cG'$ on a set $X$ define the same topology (are
  \emph{topologically equivalent}) if and only if both
  \begin{itemize}
  \item For every metric $d\in\cG$ and open ball $B_d(x,\ep)$, there
    exists a metric $d'\in\cG'$ and an open ball
    $B_{d'}(x,\ep')\subseteq B_d(x,\ep)$, and
  \item For every metric $d'\in\cG'$ and open ball $B_{d'}(x,\ep')$,
    there exists a metric $d\in\cG$ and an open ball
    $B_{d}(x,\ep)\subseteq B_{d'}(x,\ep')$.
  \end{itemize}
\end{lem}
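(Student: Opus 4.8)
The plan is to first translate the topology into the language of balls, which turns the whole statement into a near-triviality. By the earlier characterization of $\approx$ (the lemma with the one-word proof), for a point $x$ and a subset $A$ we have $x\approx A$ in the gauge $\cG$ exactly when every ball $B_d(x,\ep)$ with $d\in\cG$ meets $A$; equivalently, $x\not\approx A$ in $\cG$ exactly when \emph{some} ball $B_d(x,\ep)$ with $d\in\cG$ is disjoint from $A$, i.e.\ satisfies $B_d(x,\ep)\subseteq X\setminus A$. I would note in passing that the convention $\emptyset\not\approx A$ is consistent with this dictionary, since every ball is disjoint from $\emptyset$. This reformulation is the entire engine of the argument: the clause ``a ball lies inside $U$'' becomes ``the ball is disjoint from $X\setminus U$,'' which is a statement about the relation $\approx$.

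For the backward direction I would argue by contraposition. Suppose the two refinement conditions hold and $x\not\approx A$ in $\cG$, witnessed by a ball $B_d(x,\ep)$ disjoint from $A$. The first refinement condition supplies $d'\in\cG'$ and $\ep'$ with $B_{d'}(x,\ep')\subseteq B_d(x,\ep)$, and this smaller ball is then also disjoint from $A$, so $x\not\approx A$ in $\cG'$. Thus ``$x\not\approx A$ in $\cG$'' implies ``$x\not\approx A$ in $\cG'$''; the second refinement condition gives the reverse implication by the identical argument with $\cG$ and $\cG'$ interchanged. Together these say that $\cG$ and $\cG'$ induce the same relation $x\approx A$, that is, the same topology.

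For the forward direction, suppose the gauges are topologically equivalent and fix a ball $B_d(x,\ep)$ with $d\in\cG$; I must produce a $\cG'$-ball around $x$ contained in it. The key move is to test topological equivalence against the complement: set $A=X\setminus B_d(x,\ep)$. Then $B_d(x,\ep)$ is itself a $\cG$-ball around $x$ disjoint from $A$, so $x\not\approx A$ in $\cG$. Topological equivalence transfers this to $x\not\approx A$ in $\cG'$, which by the dictionary yields a ball $B_{d'}(x,\ep')$ with $d'\in\cG'$ disjoint from $A$, i.e.\ $B_{d'}(x,\ep')\subseteq B_d(x,\ep)$. This is precisely the first refinement condition, and the second follows by swapping the roles of $\cG$ and $\cG'$.

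Everything is elementary once the dictionary is in place, so I expect no computational obstacle. The one genuinely load-bearing idea, and the step I would flag as the crux, is recognizing that ``one ball sits inside another'' is not directly a statement about the relation $x\approx A$, and that the correct way to make it one is to feed the \emph{complement} of the ball into $\approx$. A minor point I would check is that the empty-set convention does not disrupt the dictionary in the degenerate case where the chosen ball is all of $X$, so that its complement $A$ is empty; there the forward argument still goes through, since the nonempty gauge $\cG'$ supplies some ball, which is automatically contained in $X$.
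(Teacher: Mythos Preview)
Your proof is correct and is essentially the paper's own argument. Both hinge on the same key move in the forward direction---testing topological equivalence against the complement $A=X\setminus B_d(x,\ep)$---and both use the ball-refinement in the backward direction; you phrase things via the contrapositive and an explicit ``dictionary,'' whereas the paper argues one direction by contradiction and the other directly, but the content is identical.
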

\begin{proof}
  Suppose they define the same topology, and given $B_d(x,\ep)$, and
  suppose contrarily that every open ball $B_{d'}(x,\ep')$ contains a
  point outside $B_d(x,\ep)$.  Then for every $d'\in\cG'$ and
  $\ep'>0$, there exists a point $y$ in $X \setminus B_d(x,\ep)$ with
  $d'(x,y)<\ep'$, so $x \approx (X\setminus B_d(x,\ep))$ relative to
  $\cG'$.  But $\cG$ and $\cG'$ define the same topology, so $x
  \approx (X\setminus B_d(x,\ep))$ also relative to $\cG$.  In
  particular, there exists a point $y$ outside of $B(x,\ep)$ such that
  $d(x,y)<\ep$, clearly absurd.

  Conversely, suppose the ball conditions and that $x\approx A$ with
  respect to $\cG$, i.e.\ for every $d\in\cG$ and $\ep>0$ there is a
  $y\in A$ with $d(x,y)<\ep$.  Suppose given a $d'\in\cG'$ and an
  $\ep'>0$; then there is a $B_d(x,\ep) \subseteq B_{d'}(x,\ep')$,
  hence a $y\in A \cap B_d(x,\ep) \subseteq Y \cap B_{d'}(x,\ep')$,
  which is what we want.
\end{proof}

The relation $\approx$ is a good way to \emph{think} about topological
equivalence, but the conditions in this lemma are usually the best way
to \emph{work} with it.

\begin{eg}
  The usual metric, the taxicab metric, and the maximum metric on
  $\lR^n$ are all topologically equivalent.  They are also equivalent
  to the gauge generated by the metrics $d_i(x,y) = |x_i - y_i|$.
\end{eg}

Similarly, we say that a bijection $f:X\leftrightarrow Y$ is a
\textbf{topological isomorphism} or a \textbf{homeomorphism} if we have
$x\approx A \iff f(x) \approx f(A)$.  There is an analogous condition
for this.  Some examples:

\begin{itemize}
\item Stereographic projection is a topological isomorphism from the
  plane $\lR^2$ to $S^2 \setminus \{N\}$.
\item The real line $\lR$ is topologically isomorphic to the open
  interval $(0,1)$, via $f(x) = \frac12+\frac1\pi\arctan x$ whose
  inverse is $f\inv(x) = \tan (\pi(x-\frac12))$.
\item In the discrete metric, replacing 1 with any other
  positive real number produces a different, but topologically
  equivalent, metric.
\end{itemize}

So the first step in the process of compactification involves passing
to a new topologically isomorphic space, in which it is somehow
``easier to see'' where to put points at infinity.  Soon we'll think
about what property of the new space that is, and how to add the
missing points.

\section{Exercises on gauges}
\label{sec:ex-gauge}

\begin{ex}
  Does the distance between subsets
  \[ d(A,B) = \inf_{a\in A, b\in B} d(a,b)
  \]
  define a metric on the power set $\cP X$ of $X$?
\end{ex}

\begin{ex}
  There is a gauge on any set $X$ which consists of \emph{all}
  possible metrics on $X$.  Prove that this gauge is topologically
  equivalent to a discrete metric.
\end{ex}


\begin{ex}\label{ex:fin-gauge}
  Prove that any gauge containing only a finite number of metrics is
  topologically equivalent to some gauge with only a single metric.
\end{ex}

\begin{ex}
  Prove that there is a largest gauge that is topologically equivalent
  to any given gauge.  That is, prove that for any gauge \cG\ there is
  a gauge $\cH$, which is equivalent to \cG, and such that if $\cG'$
  is topologically equivalent to \cG, then $\cG'\subseteq\cH$.
\end{ex}

\begin{ex}\label{ex:eqv-bdd}
  A metric is \textbf{bounded} if there is an $N\in[0,\infty)$ such
  that $d(x,y)<N$ for all points $x,y$.  Prove that every gauge is
  topologically equivalent to a gauge containing only bounded metrics.
\end{ex}

\begin{ex}
  Consider the following two metrics on $\lR^\om$:
  \begin{align*}
    d_\infty(x,y) &= \sup_i \big( \min(|x_i - y_i|,1)\big)\\
    d_\Sigma(x,y) &= \sum_{i=0}^\infty \min(|x_i-y_i|,1) \cdot 2^{-i}
  \end{align*}
  Convince yourself that both are, indeed, metrics.  Is either of them
  (that is, the singleton gauges $\{d_\infty\}$ and $\{d_\Sigma\}$)
  topologically equivalent to the gauge on $\lR^\om$ defined above,
  which consisted of metrics
  \[d_N(x,y) = \max_{1\le i\le N}|x_i-y_i| \qquad ?\]
\end{ex}

\begin{exstar}
  Find a set $X$ admitting a gauge which you can \emph{prove} is
  not topologically equivalent to any single metric.
\end{exstar}

\begin{exstar}\label{ex:top-space}
  A \textbf{topology} on a set $X$ is usually defined to be a
  collection of subsets of $X$, called \emph{open sets}, which is
  closed under finite intersections and arbitrary unions.  (This
  includes the intersection of no sets, which is $X$, and the union of
  no sets, which is $\emptyset$.)  Prove that every gauge on $X$ gives
  rise to a topology on $X$, and that this topology is exactly
  characterized by the relation ``$x\approx A$'' between points and
  subsets.
\end{exstar}

\begin{exstar}\label{ex:proximity}
  Suppose \cG and $\cG'$ are two gauges on the same set $X$.
  \begin{enumerate}
  \item Find an \ep-style condition which is sufficient to ensure that
    \cG and $\cG'$ define the same \emph{proximity}.
  \item Find an \ep-style condition which is both sufficient and
    \emph{necessary} to ensure that \cG and $\cG'$ define the same
    proximity.
  \item Which of the previous topological equivalences are actually
    proximal equivalences?
  \end{enumerate}
\end{exstar}

\begin{ex}\label{ex:quasi-gauge}
  A \textbf{quasi-metric} on a set $X$ is a function $d\maps X\times X
  \to \lR$ satisfying the reflexivity and transitivity properties of
  a metric, but not necessarily symmetry.  A \textbf{quasi-gauge} is a
  set of quasi-metrics satisfying the same ``filteredness'' condition as a
  gauge.  See how much of the theory of gauge spaces you can mimic for
  quasi-gauges (you may want to come back to this exercise as we
  develop more of this theory).
\end{ex}

\begin{exstar}
  Prove that every topology on a set $X$ (see \autoref{ex:top-space})
  is induced by some quasi-gauge on $X$ (see \autoref{ex:quasi-gauge}).
\end{exstar}

\section{Continuity}
\label{sec:continuity}


The most important kind of morphism between gauge spaces is one that
preserves the topology (only).

\begin{defn}
  A function $f:X\to Y$ between gauge spaces is \textbf{continuous} if
  whenever $x\approx A$ in $X$, then $f(x) \approx f(A)$ in $Y$.
\end{defn}

As we did for topological equivalence, we can reformulate this in a less
intuitive, but more useful, way.

\begin{lem}
  $f\maps X\to Y$ is continuous iff for any $x\in X$, any $Y$-metric
  $d_Y$, and any $\ep>0$, there exists an $X$-metric $d_X$ and a
  $\de>0$ such that for any $x'\in X$, if $d_X(x,x')<\de$ then
  $d_Y(f(x),f(x'))<\ep$.
\end{lem}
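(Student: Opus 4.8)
The plan is to prove the two implications separately; the $(\Leftarrow)$ direction is a direct unwinding of definitions, while for $(\Rightarrow)$ I would argue by contraposition, cooking up a witnessing subset.

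For the easy direction, suppose the $\ep$-$\de$ condition holds and that $x\approx A$ in $X$. To check $f(x)\approx f(A)$ I would fix an arbitrary $Y$-metric $d_Y$ and an arbitrary $\ep>0$ and, using the reformulation of $\approx$ proved earlier (that $u\approx B$ means: for every gauge metric and every radius, some point of $B$ lands in the corresponding ball), try to produce a point of $f(A)$ within $\ep$ of $f(x)$ in $d_Y$. The $\ep$-$\de$ condition hands me an $X$-metric $d_X$ and a $\de>0$; feeding $d_X$ and $\de$ into the hypothesis $x\approx A$ yields some $a\in A$ with $d_X(x,a)<\de$, whence $d_Y(f(x),f(a))<\ep$ and $f(a)\in f(A)$ is exactly the point I want.

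For the harder direction I would assume the $\ep$-$\de$ condition fails and manufacture a set $A$ violating continuity. Concretely, failure gives a point $x$, a $Y$-metric $d_Y$, and an $\ep>0$ such that every ball $B_{d_X}(x,\de)$ (for every $X$-metric $d_X$ and every $\de>0$) contains a point $x'$ with $d_Y(f(x),f(x'))\ge\ep$. The natural candidate is the ``bad set''
\[ A = \setof{x'\in X | d_Y(f(x),f(x'))\ge \ep}. \]
The failure statement says precisely that every open ball around $x$ meets $A$, i.e.\ $x\approx A$ (and in particular $A$ is nonempty, so the $\emptyset\not\approx A$ convention is no obstruction). On the other hand, by the very definition of $A$ every point of $f(A)$ is at $d_Y$-distance at least $\ep$ from $f(x)$, so $d_Y(f(x),f(A))\ge\ep>0$ and hence $f(x)\not\approx f(A)$. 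This contradicts continuity, so the $\ep$-$\de$ condition must hold.

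I expect the only real content to be in the harder direction, and specifically in spotting the right set $A$: once $A$ is chosen as the set of points whose images are ``$\ep$-far'' from $f(x)$, both $x\approx A$ and $f(x)\not\approx f(A)$ read off almost immediately from the definitions. It is worth noting that filteredness is never invoked here---each half of the argument uses only a single metric at a time---so the proof would go through verbatim even for a non-filtered family of metrics.
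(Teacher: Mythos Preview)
Your proof is correct and essentially identical to the paper's: both directions use the same argument, and in particular the ``hard'' direction uses exactly the same set $A = \setof{x'\in X | d_Y(f(x),f(x'))\ge \ep}$ to derive a contradiction. Your remark that filteredness is never invoked is a valid additional observation not made explicit in the paper.
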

\begin{proof}
  Suppose $f$ is continuous, and given $x$, $d_Y$, and $\ep$, and
  suppose the contrary: for every $d_X$ and $\de$ there exists an
  $x'\in X$ with $d_X(x,x')<\de$ but $d_Y(f(x),f(x')) \ge \ep$.
  Define
  \[ A = \setof{ x'\in X | d_Y(f(x),f(x')) \ge \ep } \]
  Then the assumption implies that $x\approx A$.  Hence $f(x) \approx
  f(A)$, so there should be a point $y \in f(A)$ with
  $d_Y(f(x),y)<\ep$, which is absurd.

  Conversely, suppose the condition and that $x\approx A$.  To
  prove $f(x) \approx f(A)$, say given $d_Y$ and $\ep>0$; we want to
  find a $y\in f(A)$ with $d_Y(f(x),y)<\ep$.  Choose $d_X$ and $\de$
  as in the condition; then since $x\approx A$ there exists $x'\in A$
  with $d_X(x,x')<\de$.  Take $y = f(x')$.
\end{proof}

Examples:
\begin{itemize}
\item The identity function $\id_x\maps X\to X$ is always continuous.
\item Any constant function $f\maps X\to Y$, $f(x)=a$ for some fixed
  $a\in Y$, is continuous.
\item Addition $+\maps \lR^2\to\lR$ is continuous; take $\de =
  \ep/2$.
\item Multiplication $\cdot\maps \lR^2\to\lR$ is continuous.  Given
  $(x_0,x_1)\in\lR^2$, take
  \[\de = \min\left(\frac{\ep}{3|x_0|}, \frac\ep {3|x_1|}, \sqrt{\frac{\ep}3}\right).\]
\item The composite of continuous functions is continuous.
\item Therefore, any polynomial $p\maps \lR\to\lR$ is continuous
  (using the diagonal as well).
\item If $X$ has a discrete gauge or $Y$ has an indiscrete gauge, then
  any function $X\to Y$ is continuous.
\end{itemize}

Non-examples:
\begin{itemize}
\item The step function \[\th(x) =
  \begin{cases}
    0 & x \le 0\\ 1 & x > 0
  \end{cases}\]
  is not continuous.  Take $x=0$ and $\ep < 1$.  Then there is no ball
  around $0$ such that everything in that ball is mapped to within
  \ep\ of $\th(0)=0$.
\item Similarly for the blip function \[\de(x) =
  \begin{cases}
    0 & x\neq 0\\ 1 & x = 0
  \end{cases}.\]
\end{itemize}

A continuous function preserves all the topological properties of a
space.  It extends our previous notion of topological isomorphism, as
follows.

\begin{thm}
  A bijection $f:X\leftrightarrow Y$ is a topological isomorphism if
  and only if both it and its inverse are continuous.
\end{thm}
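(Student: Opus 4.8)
The plan is to unwind the definitions and observe that the defining biconditional of a topological isomorphism decomposes exactly into the two continuity conditions. Recall that $f$ being a topological isomorphism means $x\approx A \iff f(x)\approx f(A)$ for all $x\in X$ and $A\subseteq X$, whereas continuity of $f$ is precisely the forward implication $x\approx A \implies f(x)\approx f(A)$. So the only real content is to match the \emph{reverse} implication against continuity of $f\inv$, and for this the bijectivity of $f$ is what does the work.

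For the forward direction, I would assume $f$ is a topological isomorphism. Then continuity of $f$ is immediate, being literally the left-to-right half of the biconditional. To see that $f\inv$ is continuous, I would suppose $y\approx B$ in $Y$; since $f$ is a bijection I can write $y=f(x)$ and $B=f(A)$ with $x=f\inv(y)$ and $A=f\inv(B)$, so the hypothesis reads $f(x)\approx f(A)$. The right-to-left half of the biconditional then yields $x\approx A$, i.e.\ $f\inv(y)\approx f\inv(B)$, which is exactly continuity of $f\inv$.

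For the converse, I would assume that both $f$ and $f\inv$ are continuous. Continuity of $f$ gives the implication $x\approx A \implies f(x)\approx f(A)$ directly. For the other direction, suppose $f(x)\approx f(A)$ and apply continuity of $f\inv$ to the point $f(x)$ and the set $f(A)$: this gives $f\inv(f(x))\approx f\inv(f(A))$, which simplifies to $x\approx A$. Combining the two implications gives the full biconditional, so $f$ is a topological isomorphism.

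The one point that requires care — and the only place bijectivity is used — is the bookkeeping with images and preimages: I am repeatedly using that every subset $B\subseteq Y$ has the form $f(A)$ for $A=f\inv(B)$, together with the identities $f\inv(f(A))=A$ (injectivity) and $f(f\inv(B))=B$ (surjectivity). These let me transport the relation $\approx$ freely across $f$, and without the bijection hypothesis neither direction would go through. No \ep-\de estimates or appeal to the filteredness property are needed; the argument is purely formal manipulation of the $\approx$ relation.
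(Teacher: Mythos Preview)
Your proof is correct and is exactly the unwinding the paper has in mind: the paper's own proof consists of the single phrase ``Essentially by definition.''  You have simply made explicit the bookkeeping with images and preimages that the paper leaves implicit.
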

\begin{proof}
  Essentially by definition.
\end{proof}

We saw lots of topological isomorphisms last time.  Here is an
important sort of counterexample to keep in mind.

\begin{eg}
  The map $t\mapsto (\cos t, \sin t)$ from $[0,2\pi)$ to $S^1$ is
  continuous and bijective, but not a topological isomorphism.
\end{eg}

The notion of continuous function gives a reason why we don't want to
use $d_\infty(x,y) = \max_{i\in\lN} |x_i-y_i|$ as a single metric on
$\lR^\om$, and similarly why we don't want to close up a set of
metrics under infinite suprema.

\begin{prop}
  Suppose that $\lR^\om$ has the metric
  \[ d_\infty(x,y) = \sup_i \big( \min(|x_i - y_i|,1)\big)\] and \lR\ the
  usual one.  Then the function $f(t) =
  (t,\sqrt{|t|},\sqrt[3]{|t|},\sqrt[4]{|t|},\dots)$ from \lR\ to
  $\lR^\om$ is not continuous at $t=0$.
\end{prop}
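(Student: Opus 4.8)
The plan is to use the \ep-\de\ characterization of continuity from the preceding lemma. Since both \lR\ and $\lR^\om$ carry a single metric here, $f$ is continuous at $0$ precisely when for every $\ep>0$ there is a $\de>0$ such that $|t|<\de$ implies $d_\infty(f(0),f(t))<\ep$. I will exhibit a single bad value of $\ep$ (say $\ep=\hf$) for which no such $\de$ can exist.

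First I would record that $f(0)=(0,0,0,\dots)$, so that for any $t$,
\[ d_\infty(f(0),f(t)) = \sup_i \min(|f(t)_i|,1), \]
where the coordinates of $f(t)$ are $f(t)_0=t$ and $f(t)_i = |t|^{1/(i+1)}$ for $i\ge 1$. The key step is then to evaluate this supremum for $t$ with $0<|t|<1$. For such $t$ the tail coordinates $|t|^{1/(i+1)}$ form an increasing sequence---the exponents $1/(i+1)$ decrease toward $0$, and a number in $(0,1)$ raised to a smaller positive power gets larger---and $|t|^{1/(i+1)} \to |t|^0 = 1$ as $i\to\infty$. The $0$-th coordinate contributes only $\min(|t|,1)=|t|<1$, so it is dominated. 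Hence $\sup_i \min(|f(t)_i|,1) = 1$, and crucially this holds \emph{independently} of how small $|t|$ is.

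With this in hand the conclusion is immediate: given any $\de>0$, I choose any $t$ with $0<|t|<\min(\de,1)$; then $|t|<\de$ but $d_\infty(f(0),f(t))=1 > \hf$. Thus the continuity condition fails for $\ep=\hf$, and $f$ is not continuous at $0$.

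There is no serious obstacle here; the only point to get right is the limit $|t|^{1/n}\to 1$ as $n\to\infty$ for fixed $t\ne 0$, which is exactly the phenomenon that makes the sup-metric $d_\infty$ ``topologically incorrect.'' Intuitively, no matter how close $t$ is to $0$, the later coordinates of $f(t)$ are pushed out toward distance $1$ from the origin, so $f(t)$ never enters any small $d_\infty$-ball around $f(0)$.
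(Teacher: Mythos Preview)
Your proof is correct and follows essentially the same idea as the paper's: both exploit the fact that for any fixed $t\neq 0$ the coordinates $|t|^{1/n}$ eventually exceed any prescribed $\ep<1$. You sharpen this slightly by computing the supremum exactly as $1$ (using $|t|^{1/n}\to 1$) and then working with the single value $\ep=\hf$, whereas the paper leaves $\ep$ arbitrary and merely exhibits, for each $t$, one coordinate larger than $\ep$; but the underlying argument is the same.
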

\begin{proof}
  Fix $\ep>0$.  Then $|\sqrt[n]{|t|}|<\ep$ is equivalent to $|t| <
  \ep^n$.  Since for any $t\neq 0$, there is an $n$ such that $|t| >
  \ep^n$, there is no $\de>0$ such that $|t|<\de$ implies
  $|\sqrt[n]{|t|}|<\ep$ for all $n$.
\end{proof}

However, the actual gauge on $\lR^\om$ we used does work.  More
generally, we have the following.  Let $X$ be a gauge space, $A$ a
set, and $X^A$ the set of all functions $A\to X$, which we write as
$(x_a)_{a\in A}$.  Let $\pi_a \maps X^A \to X$ be defined by $\pi_a(x)
=x_a$ for each $a\in A$.  For each $X$-metric $d$ and
each \emph{finite} subset $B\subseteq A$, define a metric $d_B$ on $X^A$
by $d_B(x,y) = \max_{a\in B} d(x_a,y_a)$.  The set of all these
metrics, as $B$ varies over finite subsets of $A$, is called the
\emph{pointwise gauge} on $X^A$.

\begin{prop}\label{thm:pointwise}
  For any gauge space $Z$, a function $f\maps Z\to X^A$ is continuous
  (where $X^A$ has the pointwise gauge) if and only if for each $a\in
  A$, the function $\pi_a \circ f\maps Z\to X$ is continuous.
\end{prop}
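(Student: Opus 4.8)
The plan is to prove both directions using the $\ep$-$\de$ characterization of continuity established in the continuity lemma, which says $f\maps Z\to X^A$ is continuous iff for every $z\in Z$, every $X^A$-metric, and every $\ep>0$, there is a $Z$-metric $d_Z$ and a $\de>0$ so that $d_Z(z,z')<\de$ forces the $X^A$-distance between $f(z)$ and $f(z')$ to be less than $\ep$. The key observation that makes everything work is that the $X^A$-metrics are exactly the metrics $d_B(x,y)=\max_{a\in B}d(x_a,y_a)$ indexed by a single $X$-metric $d$ and a \emph{finite} subset $B\subseteq A$, together with $\pi_a\circ f = (\pi_a\circ f)$ being the $a$-th component of $f$. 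So ``closeness in $X^A$'' is by definition just simultaneous closeness in finitely many coordinates.

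First I would prove the easy direction: continuity of $f$ implies continuity of each $\pi_a\circ f$. Since $\pi_a$ is itself continuous (given any $X$-metric $d$ and $\ep>0$, the singleton-gauge metric $d_{\{a\}}(x,y)=d(x_a,y_a)$ on $X^A$ witnesses continuity of $\pi_a$ with $\de=\ep$), this follows immediately because the composite of continuous functions is continuous, a fact already recorded in the examples following the definition of continuity. So this direction is essentially free.

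For the substantive direction, suppose each $\pi_a\circ f$ is continuous; I want to show $f$ is continuous. Fix $z\in Z$, an $X^A$-metric $d_B$ with $B=\{a_1,\dots,a_n\}$ finite, and $\ep>0$. For each $a_k\in B$, continuity of $\pi_{a_k}\circ f$ supplies a $Z$-metric $d_k$ and a $\de_k>0$ such that $d_k(z,z')<\de_k$ implies $d(f(z)_{a_k},f(z')_{a_k})<\ep$. Here is where I use the \emph{filteredness property}~\eqref{eq:filt} of the gauge $\cG_Z$: since $B$ is finite, I may inductively combine $d_1,\dots,d_n$ into a single $Z$-metric $d_Z$ with $d_Z\ge d_k$ for all $k$, and set $\de=\min_k\de_k$. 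Then $d_Z(z,z')<\de$ forces $d_k(z,z')\le d_Z(z,z')<\de\le\de_k$ for every $k$, hence $d(f(z)_{a_k},f(z')_{a_k})<\ep$ for all $k$, which is exactly $d_B(f(z),f(z'))=\max_k d(f(z)_{a_k},f(z')_{a_k})<\ep$.

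The main obstacle, and the whole point of the proposition, is the handling of the index set: the argument works precisely because $B$ is finite, so that only finitely many conditions on $z'$ must be satisfied simultaneously, and these can be amalgamated into one metric and one $\de$ using finiteness of the minimum together with the filteredness of the gauge. This is exactly what fails for an infinite index set and is the conceptual content that distinguishes the correct pointwise gauge from the single-metric attempt $d_\infty$; indeed, the preceding \autoref{thm:pointwise}'s non-continuity example is built on the impossibility of choosing a uniform $\de$ across infinitely many coordinates. I therefore expect no genuine difficulty beyond making sure the finiteness of $B$ and the filteredness axiom are invoked cleanly; the rest is a routine unwinding of the $\ep$-$\de$ definition.
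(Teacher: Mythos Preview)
Your proof is correct and follows essentially the same route as the paper's: both directions are handled identically, first showing each $\pi_a$ is continuous via the singleton metric $d_{\{a\}}$ and invoking composition, then for the converse fixing a finite $B$, pulling back each coordinate to get $(d_k,\de_k)$, and using filteredness of the $Z$-gauge together with $\de=\min_k\de_k$ to amalgamate. The only difference is cosmetic---the paper phrases the argument in terms of balls and preimages while you use the $\ep$--$\de$ formulation directly---and your closing remarks about why finiteness of $B$ is essential match the paper's subsequent discussion exactly.
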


In other words, a function is continuous if and only if all its
coordinates are continuous.

\begin{proof}
  Firstly, the function $\pi_a \maps X^A \to X$ defined by $\pi_a(x)
  =x_a$ is evidently continuous; the preimage of each $B_d(x,\ep)$
  contains $B_{d_{\{a\}}}(x,\ep)$.  Thus, $\pi_a\circ f$ is continuous
  if $f$ is.  (This much is also true for the metric $d_\infty$.)

  Now suppose that each $\pi_a \circ f$ is continuous.  Thus, for each
  $a\in A$, $X$-metric $d$, and ball $B_d(f(z)_a,\ep)$ there is a
  $Z$-metric $d_a$ and a ball $B_{d_a}(z,\de_a)$ contained in the
  preimage of $B_d(f(z)_a,\ep)$.  Choose any ball around $f(z)$ in
  $X^A$; it is of the form $B_{d_B}(f(z),\ep)$ for some finite
  $B\subseteq A$.  By filteredness of the gauge on $Z$, there is a
  $Z$-metric $d'$ with $d'\ge d_a$ for all $a\in B$.  If we let $\de =
  \min_{a\in B} \de_a$, then it follows that $B_{d'}(z,\de)$ is
  contained in the preimage of $B_{d_B}(f(z),\ep)$, as desired.
\end{proof}

Here is the first place we've used filteredness, but this may not be
satisfying: we only used it because we decided to close up the metrics
on $X^A$ under finite maxes.  What if we didn't close them up under
any maxes at all?

Consider $\lR\times\lR$ with the two metrics $d_0(x,y) = |x_0-y_0|$
and $d_1(x,y) = |x_1-y_1|$.  This is not a gauge by our definition,
but it would be if we omitted the filteredness condition.  But in this
``gauge'', and with our definition of $\approx$, we would have
\[ (0,0) \approx \{(1,0), (0,1) \} \] because for each of $d_0$ and
$d_1$, there is an element of the right-hand set which is at distance
zero from the left-hand point.  Clearly this is not the right topology
on $\lR\times \lR$.  Thus, if we didn't require the filteredness
condition, we would have to define $A\approx B$ with reference to
arbitrary finite sets of metrics, which is more of a pain to think
about.  (We will, however, have to do something similar later on when
we talk about evaluation data in \S\ref{sec:points-in-betaX}.)

\begin{rmk}
  Lest the reader come away with too negative an impression of
  $d_\infty$, let me say that it also has important uses.  It is
  called the \textbf{supremum metric} or \textbf{sup-metric}, and
  induces the \textbf{topology of uniform convergence} on $X^A$.
\end{rmk}

\section{Exercises on continuity}
\label{sec:ex-continuity}

\begin{ex}
  Let $X$ and $Y$ be metric spaces (that is, gauge spaces whose gauge
  contains only one metric).  A \textbf{contraction} is a function
  $f\maps X\to Y$ such that $d(f(x),f(x')) \le d(x,x')$.  Prove that
  any contraction is continuous.
\end{ex}

\begin{ex}\label{ex:dist-cts}
  Let $A\subseteq X$ be nonempty.  Prove that for any $X$-metric $d$,
  the function $d_A\maps X\to \lR$ defined by $d_A(y) = d(A,y)$ is
  continuous.
\end{ex}

\begin{ex}
  Prove that two gauges on the same set $X$ are topologically
  equivalent if and only if they agree about which functions $f:X\to
  \lR$ are continuous.
\end{ex}



\begin{ex}\label{ex:sep-quot}
  We can consider the relation $\approx$ between pairs of
  \emph{points} of a gauge space as well: we have $x\approx y$ iff
  $d(x,y)=0$ for all gauge metrics $d$.  Prove that:
  \begin{enumerate}
  \item $\approx$ is an equivalence relation on $X$.
  \item The quotient $X/\approx$ is a separated gauge space.
  \item The quotient map $X\to X/\approx$ is continuous.
  \end{enumerate}
\end{ex}

\begin{ex}\label{ex:collapsing-metric}
  Let $(X,\cG)$ be a gauge space and $A\subseteq X$ a subset.  For each
  gauge metric $d$, define a new metric by
  \[d_A(x,y) = \min\big(d(x,y), d(x,A)+d(A,y)\big).\]
  \begin{enumerate}
  \item Prove that $d_A$ is a metric on $X$, and that the set of these
    metrics defines a new gauge $\cG_A$ on $X$.
  \item Prove that the identity is a continuous function $(X,\cG)\to
    (X,\cG_A)$.
  \end{enumerate}
\end{ex}

\begin{exstar}\label{ex:top-cts}
  If $X$ and $Y$ are sets equipped with topologies as in
  \autoref{ex:top-space}, a function $f:X\to Y$ is usually defined to
  be \emph{continuous} if for every open subset $U\subseteq Y$, the
  preimage $f^{-1}(U)\subseteq X$ is also open.  Prove that a function
  between gauge spaces is continuous in our sense if and only if it is
  continuous in this sense with respect to the underlying topologies
  you defined in \autoref{ex:top-space}.
\end{exstar}


\begin{exstar}\label{ex:proxcts}
  A function $f:X\to Y$ is called \textbf{proximally continuous} if
  $A\approx B$ in $X$ implies $f(A) \approx f(B)$ in $Y$.  Find an
  \ep-style condition which is sufficient (or, better, necessary and
  sufficient) for $f$ to be proximally continuous.  Which of the
  continuous functions we have looked at are proximally continuous?
  (See \autoref{ex:proximity}.)
\end{exstar}

\section{Total boundedness}
\label{sec:tbdd}

Our examples of compactification of $\lR^2$ all had two steps.  First
we mapped $\lR^2$ to a space that was `finite in extent', making the
desired `points at infinity' into `holes' at a finite location.
Second, we then added those points to make the space compact.  Our
goal is to formalize both of those steps to apply them to arbitrary
gauge spaces.

Here is a rough roadmap of the concepts that we will introduce.

\begin{itemize}
\item A space is called \textbf{compact} if you can't escape from it,
  either to infinity or into a hole.
\item A space is called \textbf{totally bounded} if it is `finite in
  extent', or equivalently if you can't escape to infinity (although
  you might escape into a hole).
\item A space is called \textbf{complete} if it has no holes, so the
  only way to escape from it is to infinity.
\end{itemize}
Clearly a space should be compact if and only if it is both totally
bounded and complete.  Some examples:
\begin{itemize}
\item Sphere, toruses, discs, $RP^n$ are all compact.
\item A sphere minus a point is totally bounded, but not complete,
  since it has a hole.
\item An ordinary plane is complete, but not totally bounded, since
  it is `infinite in extent'.
\item A plane minus a point is neither complete nor totally bounded.
\end{itemize}

It turns out that compactness is a topological property, i.e.\ it is
invariant under topological equivalence.  (This will not be obvious
from our definition, but it is true.)  But total boundedness is
\emph{not} a topological property, and neither will completeness be.
This is a \emph{good} thing for our process of compactification:
\begin{enumerate}[label=\arabic*.]
\item Given a gauge space, find a topologically equivalent gauge which
  is totally bounded.
\item `Complete' the resulting totally bounded gauge space, to produce
  a space which is both totally bounded and complete, hence compact.
\end{enumerate}
It is possible to compactify `in one step' by directly adding points
at infinity.  However, it turns out to be easier, and more
comprehensible, to first bring those points `in from infinity' and
then add them at some finite location.

Now let's try to define ``total boundedness'' precisely.  What
important properties does, say, the sphere-minus-a-point have which
distinguish it from the plane for our purposes?

Intuitively, it is ``bounded'' in some sense.  The obvious notion of
\emph{boundedness} for a metric space $X$ is that $d(x,y)<N$ for some
$N$.  Equivalently, if $X = B(x,N)$ for some $x$ and $N$.  However,
this is not a very well-behaved concept.  Suppose $d$ is any metric;
then we can define a new metric by
\[ d'(x,y) = \min(d(x,y),1) \]
This is bounded, but at small distance scales it looks like $d$.  For
instance, on $\lR^2$ we have
\[ d'(x,y) = \min\left( \sqrt{(x_0-y_0)^2 + (x_1-y_1)^2}, 1 \right) \]
Then we have $\lR^2 = B_{d'}(\mathbf{0},N)$ for any $N> 1$.  But if we
shrink $N$ just a little, this becomes \emph{wildly false}: it takes
infinitely many $d'$-balls of radius $1$ to cover $\lR^2$.

This doesn't happen on a sphere: for any $\ep>0$, it only takes
finitely many balls of radius $\ep$ to cover the sphere.  Thus, the
following definition captures this notion of being ``bounded at
arbitrarily small distance scales.''

\begin{defn}
  A gauge space $X$ is \textbf{totally bounded} if for any $X$-metric
  $d$ and $\ep>0$, we can write $X$ as the union of finitely many
  balls $B_d(x_i,\ep)$:
  \[ X = B_d(x_0,\ep) \cup \cdots\cup B_d(x_n,\ep). \]
\end{defn}

Of course, $X$ can always be covered by \emph{some} number of
$(d,\ep)$-balls.  Take, for instance, \emph{all} the balls
$B_d(x,\ep)$ for $x\in X$.  The point is whether we can do it with
finitely many.

Equivalently, $X$ is totally bounded if for any $d$ and $\ep>0$, there
exists a finite subset $A\subseteq X$ such that $d(x,A)<\ep$ for all
$x\in X$.

\begin{egs}
  \begin{itemize}
  \item The unit square $[0,1]^2$ is totally bounded; given any \ep\
    we can take a finite \ep-spaced grid as the centers of \ep-sized
    balls.
  \item The plane $\lR^2$ is not totally bounded; any finite number of
    balls has only finite area.
  \item Any subspace of a totally bounded space is totally bounded.
    Thus, for instance, $\lQ \cap [0,1]$ is totally bounded, even
    though it has lots and lots of holes.
  \end{itemize}
\end{egs}

\section{Uniformity}
\label{sec:uniformity}

Note that total boundedness is \emph{not} a topological property: the
plane is not totally bounded, but the sphere-minus-a-point is, and
they are topologically isomorphic.  In fact, total boundedness is not
even a proximity concept!

\begin{eg}
  Let $X$ be an arbitrary set, and consider the following two gauges
  on $X$.  Let \cG be the discrete gauge, containing only one metric
  with $d(x,y) = 1$ iff $x\neq y$.

  To define $\cG'$, suppose we have a finite partition of $X$:
  \[ X = A_1 \sqcup \dots \sqcup A_n.
  \]
  (Writing $\sqcup$ instead of $\cup$ means that $A_i \cap A_j =
  \emptyset$ for $i\neq j$.)  Then define a metric on $X$ by
  \[ d_{A_1,\dots,A_n}(x,y) =
  \begin{cases}
    0 &\text{if } x,y\in A_i \text{ for some } i\\
    1 &\text{if } x\in A_i,\; y\in A_j \text{ for } i\neq j.
  \end{cases}
  \]
  The collection of these metrics, for all partitions of $X$, defines
  a gauge $\cG'$ on $X$.

  I claim that these two gauges are proximally equivalent.  First of
  all, in the discrete gauge, we clearly have $A\approx B$ iff $A\cap
  B\neq\emptyset$.  Since $A\cap B\neq\emptyset$ always implies
  $A\approx B$, it suffices to prove that if $A\cap B =\emptyset$,
  then $A\not\approx B$ in $\cG'$.  But defining $C = X \setminus
  (A\cup B)$, we have a partition $X = A \sqcup B \sqcup C$, and
  clearly
  \[ d_{A,B,C}(A,B) = 1\]
  so that $A\not\approx B$.

  However, $\cG'$ is totally bounded, but (if $X$ is infinite) \cG is
  not.  The latter is obvious: there is no finite cover of $X$ by
  \ep-sized balls for any $\ep<1$.  The former is almost as obvious:
  for any partition and $\ep>0$, each $A_i$ is contained in some
  $(d_{A_1,\dots,A_n},\ep)$-ball (in fact, if $\ep<1$ it is equal to
  some such ball), so that $n$ such balls suffice.
\end{eg}

There ought to be some notion of equivalence which does preserve total
boundedness, though.  It turns out that the following is the right
one.

\begin{defn}
  A function $f:X\to Y$ between gauge spaces is \textbf{uniformly
    continuous} iff for any $Y$-metric $d_Y$ and $\ep>0$, there exists
  an $X$-metric $d_X$ and $\de>0$ such that \emph{for all $x,x'\in
    X$}, we have
  \[ d_X(x,x') <\de \;\Longrightarrow\; d_Y(f(x),f(x'))<\ep. \]
\end{defn}

The difference between uniform continuity and continuity is that in
the uniform case, $\de$ is only allowed to depend on $\ep$, not on
$x$.

\begin{eg}
  $f:\lR\to\lR$ defined by $f(x)=x^2$ is not uniformly continuous.
\end{eg}

Thus, uniform continuity is a very strong concept!

\begin{defn}
  A \textbf{uniform isomorphism} is a bijection $X\leftrightarrow Y$
  which is uniformly continuous in both directions.
\end{defn}

\begin{prop}
  If $f:X\to Y$ is a uniform surjection and $X$ is totally
  bounded, so is $Y$.
\end{prop}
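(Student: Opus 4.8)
The plan is to verify the defining condition for total boundedness of $Y$ directly, feeding the relevant data backward through $f$ and then pushing a finite cover of $X$ forward. So I would fix an arbitrary $Y$-metric $d_Y$ and an arbitrary $\ep>0$, with the goal of producing finitely many points of $Y$ whose $(d_Y,\ep)$-balls cover $Y$. The first move is to invoke uniform continuity of $f$ at these data: there exist an $X$-metric $d_X$ and a $\de>0$ such that, for all $x,x'\in X$, $d_X(x,x')<\de$ implies $d_Y(f(x),f(x'))<\ep$. The essential feature — and the reason ordinary continuity would not do — is that this single $\de$ works simultaneously for all $x$, so it can serve as a uniform mesh size.

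Next I would apply total boundedness of $X$ to the metric $d_X$ and the radius $\de$, obtaining centers $x_0,\dots,x_n$ with $X = B_{d_X}(x_0,\de)\cup\cdots\cup B_{d_X}(x_n,\de)$. The candidate cover of $Y$ is then the finite family $B_{d_Y}(f(x_0),\ep),\dots,B_{d_Y}(f(x_n),\ep)$. To check it covers, take any $y\in Y$; surjectivity supplies an $x\in X$ with $f(x)=y$, the cover of $X$ places $x$ in some $B_{d_X}(x_i,\de)$ so that $d_X(x,x_i)<\de$, and the uniform-continuity estimate then yields $d_Y(y,f(x_i)) = d_Y(f(x),f(x_i)) < \ep$, i.e.\ $y\in B_{d_Y}(f(x_i),\ep)$. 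Since $d_Y$ and $\ep$ were arbitrary, this exhibits $Y$ as totally bounded.

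There is no genuine obstacle here; the argument is a direct chaining of the three hypotheses, using symmetry of the metrics implicitly when passing between $d_X(x,x_i)$ and membership in the ball $B_{d_X}(x_i,\de)$. The one point demanding care is the order of quantifiers: the $\de$ from uniform continuity must be chosen \emph{before} the total-boundedness cover, so that the same $\de$-balls of $X$ are guaranteed to land inside $\ep$-balls of $Y$. Surjectivity enters exactly once, to ensure every point of $Y$ has a preimage whose containing $\de$-ball provides a nearby center among the $f(x_i)$. Filteredness plays no role in this particular argument.
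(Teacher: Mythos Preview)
Your proof is correct and follows essentially the same approach as the paper's: fix $d_Y$ and $\ep$, pull back via uniform continuity to obtain $d_X$ and $\de$, cover $X$ by finitely many $(d_X,\de)$-balls, then push forward using surjectivity to get a finite $(d_Y,\ep)$-cover of $Y$. The paper's version is more terse, but the logical content is identical.
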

\begin{proof}
  Suppose given a $Y$-metric $d_Y$ and $\ep>0$, and choose $d_X$ and
  $\de>0$ as in uniform continuity.  Since $X$ is totally bounded, it
  is the union of finitely many $(d_X,\de)$-balls.  By definition, the
  $f$-image of each of those balls is contained in a $(d_Y,\ep)$-ball,
  and $f$ is surjective, so finitely many of the latter suffice to
  cover $Y$.
\end{proof}

\begin{cor}
  If $X\leftrightarrow Y$ is a uniform isomorphism and $X$ is totally
  bounded, so is $Y$.
\end{cor}

\section{Exercises on total boundedness and uniformity}
\label{sec:ex-tbdd-unif}

\begin{ex}
  Which of the three topologically metrics on $\lR^2$ that we
  considered in \S\ref{sec:gauge-spaces} are uniformly equivalent?
\end{ex}

\begin{ex}\label{ex:dist-ucts}
  Prove that the functions $d_A$ from \autoref{ex:dist-cts} are, in
  fact, uniformly continuous.
\end{ex}

\begin{ex}
  Show that the metric $d_\Sigma$ on $\lR^\om$ is not uniformly
  equivalent to the many-metric gauge defined in
  \S\ref{sec:gauge-spaces}.
\end{ex}

\begin{ex}\label{ex:ucts-pcts}
  Prove that any uniformly continuous function is proximally
  continuous.  Conclude that any two uniformly equivalent gauges are
  also proximally equivalent.
\end{ex}

\begin{ex}
  Which of the other continuous functions (or topological
  isomorphisms) that we have seen so far are actually uniformly
  continuous (or uniform isomorphisms)?
\end{ex}

A sequence $(x_0,x_1,x_2,\dots)$ in a gauge space $X$ is said to
\textbf{converge} to $x_\infty\in X$ if for any gauge metric $d$ and
any $\ep>0$, there exists an $N\in\lN$ such that $n>N \Longrightarrow
d(x_n,x_\infty)<\ep$.

\begin{ex}
  Prove that if $(x_0,x_1,x_2,\dots)$ converges to $x_\infty$ in $X$, and
  $f:X\to Y$ is continuous, then $(f(x_0),f(x_1),f(x_2),\dots)$
  converges to $f(x_\infty)$ in $Y$.
\end{ex}

A sequence $(x_0,x_1,x_2,\dots)$ is said to be \textbf{Cauchy} if
for any gauge metric $d$ and any $\ep>0$, there exists an $N\in\lN$
such that if $n,m>N$, then $d(x_n,x_m)<\ep$.

\begin{ex}
  Prove that if a sequence in a gauge space converges, then it is
  Cauchy.
\end{ex}

\begin{ex}
  Prove that if $(x_0,x_1,x_2,\dots)$ is Cauchy in $X$, and $f:X\to Y$
  is \emph{uniformly} continuous, then $(f(x_0),f(x_1),f(x_2),\dots)$
  is Cauchy in $Y$.
\end{ex}

\begin{ex}
  Prove that $X$ is a gauge space such that every sequence in $X$ has
  a Cauchy subsequence, then $X$ is totally bounded.
\end{ex}

\begin{ex}
  Prove that if $X$ is a totally bounded \emph{metric} space (a gauge
  space with exactly one gauge metric), then every sequence in $X$ has
  a Cauchy subsequence.
\end{ex}

\begin{exstar}
  Find an example of a totally bounded gauge space which contains a
  sequence that has no Cauchy subsequence.
\end{exstar}

\begin{exstar}\label{ex:tbddproxunif}
  Prove that if $f:X\to Y$ is proximally continuous and $Y$ is totally
  bounded, then $f$ is uniformly continuous.  Conclude that if two
  totally bounded gauges on a set $X$ are proximally equivalent, then
  they are in fact uniformly equivalent.
\end{exstar}

\begin{exstar}
  Prove that if $f:X\to Y$ is proximally continuous and $X$ is a
  metric space, then $f$ is uniformly continuous.
\end{exstar}

\begin{exstar}\label{ex:pscpt}
  A gauge space $X$ is \textbf{pseudocompact} if every continuous
  $f:X\to \lR$ is bounded.
  \begin{enumerate}
  \item Prove that if $X$ is pseudocompact, then every continuous
    $f:X\to \lR$ achieves a maximum and a minimum.\label{item:psc1}
  \item Prove that any pseudocompact gauge space is totally
    bounded.\label{item:psc2}
  \end{enumerate}
\end{exstar}

\section{Completeness and completion}
\label{sec:cauchy-points}

Recall from \S\ref{sec:tbdd} that we compactify in two steps: first we
make a space totally bounded, then we \emph{complete} it.  We've
already defined total boundedness; let's look now at completeness, or
``filling in the holes''.  What does it mean for a space to ``have a
hole''?  That is, looking at a gauge space $X$, how can we
characterize a ``point which should exist in $X$, but doesn't''?  One
obvious way is to give its distances from all the other points of $X$.

\begin{defn}
  A \textbf{Cauchy point} of a gauge space $X$ consists of, for each
  gauge metric $d$, a function $\xi_d\maps X\to [0,\infty)$ such that
  \begin{enumerate}
  \item $d(x,y)+\xi_d(y) \ge \xi_d(x)$ for each $d$ and each $x,y\in X$,
  \item $\xi_d(x)+\xi_d(y) \ge d(x,y)$ for each $d$ and each $x,y\in X$, 
  \item $\inf_{x\in X} \xi_d(x) = 0$ for each $d$ (\emph{locatedness}), and
  \item For any two gauge metrics $d_1$ and $d_2$, there exists a
    gauge metric $d_3$ such that $d_3\ge \max(d_1,d_2)$ and $\xi_{d_3}
    \ge \max(\xi_{d_1},\xi_{d_2})$.
  \end{enumerate}
\end{defn}

We think of $\xi_d(x)$ as ``$d(\xi,x)$''.  The first two conditions are
then just two versions of the triangle inequality, and can be jointly
rephrased as the `reverse triangle inequality'
\[|\xi_d(x) - d(x,y)| \le \xi_d(y).
\]
The locatedness condition can be phrased as ``every ball $B_d(\xi,\ep)$
contains a point of $X$''.  This ensures that the `new point' $\xi$ is
`right next to $X$'; we don't want to think about potential points
that are sitting far away from $X$.  And the fourth condition ensures
that the filteredness condition on gauge metrics carries over to the
functions $\xi_d$, since they are supposed to be like values of the
metrics.

We note that in conjunction with the first three conditions, the
fourth implies the following stronger version of itself.

\begin{lem}\label{thm:fourth}
  If $\xi$ is a Cauchy point and $d_1$ and $d_2$ are gauge metrics with
  $d_1\le d_2$, then $\xi_{d_1}\le \xi_{d_2}$.
\end{lem}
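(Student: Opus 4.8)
The plan is to exploit the reading of $\xi_d(x)$ as the putative distance ``$d(\xi,x)$'': if $d_1\le d_2$ then distances measured by $d_1$ ought to be no larger than those measured by $d_2$, so $\xi_{d_1}$ should lie below $\xi_{d_2}$. To make this precise I would fix a point $x\in X$ and an arbitrary $\ep>0$, and aim to prove the approximate inequality $\xi_{d_1}(x)<\xi_{d_2}(x)+2\ep$; letting $\ep\to 0$ then yields the claim at $x$, and hence everywhere. The strategy for the approximate inequality is to sandwich both quantities against the single distance $d_2(x,z)$ for a point $z$ chosen to be ``very close to $\xi$'' simultaneously for both metrics.

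The crux---and the step I expect to be the only real obstacle---is producing a single point $z$ at which \emph{both} $\xi_{d_1}$ and $\xi_{d_2}$ are small. Locatedness (condition (iii)) only guarantees, separately for each metric, points where that one function is small, and there is no reason a priori that the two families of almost-witnesses overlap. This is precisely what the fourth condition is for: applying it to $d_1,d_2$ produces a gauge metric $d_3$ with $\xi_{d_3}\ge\max(\xi_{d_1},\xi_{d_2})$, and then locatedness applied to $d_3$ yields a $z$ with $\xi_{d_3}(z)<\ep$, whence $\xi_{d_1}(z)<\ep$ and $\xi_{d_2}(z)<\ep$ at one and the same point. I note that I will not even need the $d_3\ge\max(d_1,d_2)$ half of condition (iv), only its effect on the functions $\xi$.

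With such a $z$ in hand the remainder is routine bookkeeping. The triangle inequality (condition (i)) for $d_1$, combined with the hypothesis $d_1\le d_2$, gives $\xi_{d_1}(x)\le d_1(x,z)+\xi_{d_1}(z)<d_2(x,z)+\ep$. The reverse direction (condition (ii)) for $d_2$ gives $d_2(x,z)\le\xi_{d_2}(x)+\xi_{d_2}(z)<\xi_{d_2}(x)+\ep$. Chaining these two estimates yields $\xi_{d_1}(x)<\xi_{d_2}(x)+2\ep$, and since $\ep>0$ was arbitrary we conclude $\xi_{d_1}(x)\le\xi_{d_2}(x)$ for every $x\in X$, i.e.\ $\xi_{d_1}\le\xi_{d_2}$.
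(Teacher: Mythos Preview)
Your proof is correct and essentially identical to the paper's: both apply condition (iv) to obtain a $d_3$ dominating $\xi_{d_1}$ and $\xi_{d_2}$, use locatedness for $d_3$ to find a single point where both are small, and then chain conditions (i) and (ii) with the hypothesis $d_1\le d_2$ to obtain the $2\ep$ estimate. The only difference is notational (you fix $x$ and find $z$, the paper fixes $y$ and finds $x$).
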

\begin{proof}
  Choose $d_3$ as assumed in the fourth condition.  Now for any
  $\ep>0$, use locatedness to choose an $x\in X$ with
  $\xi_{d_3}(x)<\ep$, hence also $\xi_{d_1}(x)<\ep$ and $\xi_{d_2}<\ep$.
  Thus for any $y\in X$,
  \begin{align}
    \xi_{d_1}(y)
    &\le d_1(x,y) + \xi_{d_1}(x)\\
    &\le d_2(x,y) + \ep\\
    &\le \xi_{d_2}(y) + \xi_{d_2}(x) + \ep\\
    &\le \xi_{d_2}(y) + 2\ep.
  \end{align}
  Since this is true for all $\ep>0$, we have $\xi_{d_1} \le \xi_{d_2}$.
\end{proof}


If $z\in X$, then we have a Cauchy point called \zhat\ defined by
$\zhat_d(x) = d(z,x)$.  We say a Cauchy point is \textbf{represented
  by $z$} if it is equal to $\zhat$.  We say that a gauge space is
\textbf{complete} if every Cauchy point is represented (by some
point).

\begin{lem}
  A Cauchy point $\xi$ is represented by $z$ iff $\xi_d(z)=0$
  for all $d$.
\end{lem}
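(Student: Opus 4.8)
The plan is to prove both implications directly from the two triangle-inequality axioms of a Cauchy point (the first two conditions in the definition); notably, neither locatedness nor the fourth (filteredness) condition is needed.

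For the forward direction, I would assume $\xi = \zhat$ and simply evaluate at the point $z$ itself: by reflexivity of the metric, $\xi_d(z) = \zhat_d(z) = d(z,z) = 0$ for every gauge metric $d$. This is immediate from the definition of $\zhat$.

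For the converse, I would assume $\xi_d(z) = 0$ for all $d$ and show that $\xi_d(x) = d(z,x)$ for every $x\in X$ and every gauge metric $d$, which is precisely the assertion $\xi = \zhat$. The cleanest route is the reverse triangle inequality $|\xi_d(x) - d(x,y)| \le \xi_d(y)$ recorded just after the definition: specializing $y = z$ gives $|\xi_d(x) - d(x,z)| \le \xi_d(z) = 0$, whence $\xi_d(x) = d(x,z) = d(z,x)$ by symmetry. If one prefers not to invoke the packaged form, the same conclusion follows by using the two conditions separately: the first condition with $y=z$ yields $d(z,x) \ge \xi_d(x)$ (using $\xi_d(z)=0$ and symmetry), while the second condition with $y=z$ yields $\xi_d(x) \ge d(x,z) = d(z,x)$; combining the two inequalities gives equality.

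There is no genuine obstacle here. The only points to keep straight are that the comparison of $\xi_d$ with $\zhat_d$ must be carried out at every point $x$ and for every gauge metric $d$ simultaneously, and that symmetry of the metric is what lets us pass between $d(x,z)$ and $d(z,x) = \zhat_d(x)$. It is perhaps worth remarking that the argument uses only the two triangle-inequality conditions, so the equivalence would persist even for a weaker notion of Cauchy-point datum lacking locatedness or the filteredness condition.
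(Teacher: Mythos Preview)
Your proof is correct and follows essentially the same route as the paper: the forward direction is immediate from $d(z,z)=0$, and the converse uses the two triangle-inequality conditions with $y=z$ to bound $\xi_d(x)$ above and below by $d(x,z)$. Your additional remark that locatedness and filteredness are not needed is accurate and worth noting.
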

\begin{proof}
  Clearly if $\xi$ is represented by $z$, then $\xi_d(z) = d(z,z)=0$.
  Conversely, if $\xi_d(z)=0$, then for any $x$ we have
  \[\xi_d(x) \le d(x,z) + \xi_d(z) = d(x,z)\]
  and
  \[\xi_d(x) = \xi_d(x) + \xi_d(z) \ge d(x,z)\]
  and thus $\xi_d(x) = d(x,z)$, so $\xi = \zhat$.
\end{proof}

It's easy to come up with silly examples of non-represented Cauchy
points.
\begin{itemize}
\item If $X=\lR^2 \setminus \{0\}$, then $\xi_d(x) = d(0,x)$ is a
  Cauchy point which is not represented (it should be represented by
  0).  Thus $\lR^2\setminus \{0\}$ is not complete.
\item More generally, if $X$ is any gauge space and $y\in X$, then on
  $Y=X\setminus \{y\}$ the collection of functions $\xi_d(z) = d(y,z)$
  (the distance taken in $X$) defines a non-representable Cauchy point
  of $Y$, so $Y$ is not complete.
\end{itemize}

In these cases, it is trivial to see how to `complete' the incomplete
spaces; add back in the missing point!  In general, what we can do is
use the Cauchy points \emph{themselves} to stand for the missing
points.

Let \Xhat\ be the set of Cauchy points in $X$.  Each point of $X$
gives a Cauchy point \xhat, so we have an inclusion $X\to \Xhat$.  We
want to make $\Xhat$ a gauge space; that is, we need to somehow extend
the metrics on $X$ to $\Xhat$.

By definition, a Cauchy point knows what its distances are to any
point of $X$, but not its distances to \emph{another} Cauchy point.
However, each Cauchy point is approximated arbitrarily closely by
points of $X$, so we should be able to get pretty close to the
distance between two Cauchy points $\xi$ and $\ze$ by going first from
$\xi$ to some point of $X$, then to $\ze$.

This motivates the following definition.  For each metric $d$ on $X$,
define a metric \dhat\ on \Xhat\ by setting
\[\dhat(\xi,\ze) = \inf_{x\in X} \big(\xi_d(x) + \ze_d(x)\big).\]
The locatedness of Cauchy points implies $\dhat(\xi,\xi)=0$.  Symmetry is
clear.  For transitivity, we have
\begin{align*}
  \dhat(\xi,\ze) + \dhat(\ze,\chi)
  &= \inf_x \big( \xi_d(x) + \ze_d(x) \big) + \inf_y \big(\ze_d(y) + \chi_d(y)\big)\\
  &= \inf_{x,y} \big( \xi_d(x) + \ze_d(x) + \ze_d(y) + \chi_d(y)\big)\\
  &\ge \inf_{x,y} \big( \xi_d(x) + d(x,y) + \chi_d(y)\big)\\
  &\ge \inf_{x} \big( \xi_d(x) + \chi_d(x) \big)\\
  &= \dhat(\xi,\chi).
\end{align*}
Finally, \autoref{thm:fourth} implies that if $d\ge d'$, then $\dhat
\ge \widehat{d'}$.  Therefore, the collection of metrics $\{\dhat\}$
forms a gauge on \Xhat, since given $\widehat{d_1}$ and
$\widehat{d_2}$ we can find $d_3$ with $d_3\ge d_1$ and $d_3\ge d_1$,
hence $\widehat{d_3}\ge\widehat{d_1}$ and $\widehat{d_3}\ge
\widehat{d_1}$.

\begin{lem}[Yoneda]\label{thm:yoneda}
  For any $z\in X$ and $\xi\in\Xhat$, and any $X$-metric $d$, we have
  $\dhat(\zhat,\xi) = \xi_d(z)$.  In particular, for $z,w\in X$ we
  have $\dhat(\zhat,\what) = d(z,w)$.
\end{lem}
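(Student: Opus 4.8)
The plan is to prove the statement $\dhat(\zhat,\xi) = \xi_d(z)$ directly from the definitions, and then obtain the ``in particular'' clause as an immediate consequence. Recall that $\zhat_d(x) = d(z,x)$ by definition, and that
\[
\dhat(\zhat,\xi) = \inf_{x\in X} \big(\zhat_d(x) + \xi_d(x)\big) = \inf_{x\in X} \big(d(z,x) + \xi_d(x)\big).
\]
So the task reduces to showing that $\inf_{x\in X}\big(d(z,x) + \xi_d(x)\big) = \xi_d(z)$. This is a two-sided inequality, and I expect both directions to be short applications of facts already in hand.

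For the inequality $\inf_{x}\big(d(z,x) + \xi_d(x)\big) \le \xi_d(z)$, I would simply plug in $x = z$: the term becomes $d(z,z) + \xi_d(z) = 0 + \xi_d(z) = \xi_d(z)$ using reflexivity of $d$, and the infimum can only be smaller. For the reverse inequality $\xi_d(z) \le \inf_{x}\big(d(z,x) + \xi_d(x)\big)$, I would use the first Cauchy-point axiom, which says $d(x,y) + \xi_d(y) \ge \xi_d(x)$ for all $x,y$. Setting $y = x$ (the running variable) and the first argument to $z$ gives $d(z,x) + \xi_d(x) \ge \xi_d(z)$ for every $x\in X$; since $\xi_d(z)$ is then a lower bound for the quantity being inf'd, it is at most the infimum. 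Combining the two inequalities yields the equality.

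For the ``in particular'' clause, I would take $\xi = \what$ for $w\in X$. Then $\xi_d(z) = \what_d(z) = d(w,z)$ by definition of $\what$, and the main formula gives $\dhat(\zhat,\what) = \what_d(z) = d(w,z) = d(z,w)$ by symmetry of $d$.

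The only point requiring any care is making sure the correct Cauchy-point axiom is invoked with the arguments in the right slots; since the first axiom is stated as $d(x,y)+\xi_d(y)\ge\xi_d(x)$, one must match $x\mapsto z$ and $y\mapsto$ (the infimum variable) so that the conclusion bounds $\xi_d(z)$ from below. I do not anticipate any genuine obstacle here: the result is a direct translation of the Yoneda lemma into the metric-enriched setting, and all the needed ingredients (reflexivity, symmetry, the triangle-type axiom, and the definition of $\zhat$ and $\dhat$) are already available. The locatedness axiom is not even needed for this particular statement.
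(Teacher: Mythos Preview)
Your proof is correct and essentially identical to the paper's: both bound the infimum below by invoking the first Cauchy-point axiom $d(z,x)+\xi_d(x)\ge\xi_d(z)$, and bound it above by evaluating at $x=z$. Your added observation that locatedness is not used here is correct.
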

\begin{proof}
  By the triangle inequality for \xi, for any $x$ we have
  \[ \zhat_d(x) + \xi_d(x) = d(z,x) + \xi_d(x) \ge \xi_d(z) \]
  so that $\dhat(\zhat,\xi) \ge \xi_d(z)$.  But on the other hand, we
  also have
  \[ \dhat(\zhat,\xi) \le \zhat_d(z) + \xi_d(z) = \xi_d(z). \qedhere \]
\end{proof}

Thus, the function $\widehat{(-)}\maps X\to \Xhat$ sending $z$ to
$\zhat$ is an ``embedding'' --- though it may not be injective.  In
fact, we have:

\begin{lem}\label{ex:cplt-sep}
  $\widehat{(-)}\maps X\to \Xhat$ is injective if and only if $X$ is
  separated.  Moreover, \Xhat\ is always separated.
\end{lem}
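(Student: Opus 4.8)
The plan is to establish the ``moreover'' clause first---that \Xhat\ is always separated---since the injectivity statement then follows cleanly from it together with the Yoneda lemma (\autoref{thm:yoneda}).

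First I would show that \Xhat\ is separated. Recall that the gauge metrics on \Xhat\ are exactly the \dhat, so I must prove: if $\dhat(\xi,\ze)=0$ for every $X$-metric $d$, then $\xi=\ze$, i.e.\ $\xi_d(y)=\ze_d(y)$ for all $d$ and all $y\in X$. Fix $d$, $y$, and $\ep>0$. Since $\dhat(\xi,\ze)=\inf_{x}\big(\xi_d(x)+\ze_d(x)\big)=0$, I can choose $x\in X$ with $\xi_d(x)+\ze_d(x)<\ep$. Then I chain the two triangle inequalities of a Cauchy point: the first, applied to \xi, gives $\xi_d(y)\le d(x,y)+\xi_d(x)$, and the second, applied to \ze, gives $d(x,y)\le\ze_d(x)+\ze_d(y)$, so that
\[ \xi_d(y)\le \ze_d(y)+\big(\xi_d(x)+\ze_d(x)\big)<\ze_d(y)+\ep. \]
Letting $\ep\to 0$ yields $\xi_d(y)\le\ze_d(y)$, and the symmetric argument (swapping the roles of \xi\ and \ze) gives the reverse inequality; hence $\xi=\ze$.

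With \Xhat\ known to be separated, the injectivity claim is immediate. By \autoref{thm:yoneda} we have $\dhat(\zhat,\what)=d(z,w)$ for every $X$-metric $d$. Since \Xhat\ is separated, $\zhat=\what$ holds precisely when $\dhat(\zhat,\what)=0$ for all $d$, that is, precisely when $d(z,w)=0$ for all $d$. Therefore $\widehat{(-)}$ is injective exactly when ``$d(z,w)=0$ for all $d$'' forces $z=w$---which is exactly the definition of $X$ being separated.

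I expect the only real obstacle to be the \ep-argument in the first step: one must use locatedness (packaged into the vanishing of the infimum defining \dhat) to produce a \emph{single} point $x\in X$ that is simultaneously close to both \xi\ and \ze, and then combine the ``\xi-half'' of one triangle inequality with the ``\ze-half'' of the other so that the term $d(x,y)$ cancels out. Everything else is unwinding definitions, and the injectivity part is a direct corollary of separatedness of \Xhat\ via the Yoneda lemma.
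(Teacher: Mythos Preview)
Your proof is correct. The paper's own argument is a little different in ordering: it proves the injectivity clause directly, by observing that $\xhat=\yhat$ (as families of functions) iff $d(x,z)=d(y,z)$ for all $d$ and $z$, which---by setting $z=x$ in one direction and using the triangle inequality in the other---holds exactly when $x\approx y$. It then leaves separatedness of $\Xhat$ as an exercise. Your route instead establishes separatedness of $\Xhat$ first and then derives the injectivity statement from it via Yoneda; this is a clean reduction, at the cost of invoking the harder ``moreover'' clause to handle the easier first clause. Either way, the $\ep$-argument you give for separatedness (choosing a single $x$ with $\xi_d(x)+\ze_d(x)<\ep$ and chaining the two Cauchy-point triangle inequalities so that the $d(x,y)$ term cancels) is exactly what is needed.
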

\begin{proof}
  It's easy to see that $\xhat=\yhat$ if and only if $x\approx y$,
  which proves the first statement.
  The second is an exercise.
\end{proof}

The locatedness of Cauchy points also implies that $X$ is
\textbf{dense} in \Xhat, i.e.\ that $\xi\approx X$ for any
$\xi\in\Xhat$.

Finally, we show \Xhat\ is complete.  Suppose $\Xi$ is a Cauchy point of
\Xhat, and define a Cauchy point $\xi$ of $X$ by
\[\xi_d(x) = \Xi_{\dhat}(\xhat).
\]
Now we check the axioms for $\xi$ to be a Cauchy point of $X$.
\begin{enumerate}
\item $d(x,y) + \xi_d(y) = \dhat(\xhat,\yhat) + \Xi_{\dhat}(\yhat) \ge
  \Xi_{\dhat}(\xhat) = \xi_d(x)$.
\item $\xi_d(x) + \xi_d(y) = \Xi_{\dhat}(\xhat) + \Xi_{\dhat}(\yhat) \ge
  \dhat(\xhat,\yhat) = d(x,y)$
\item Since $\inf_{\ze\in\Xhat} \Xi_{\dhat}(\ze) = 0$, for any $\ep>0$ we
  have an $\ze\in\Xhat$ with $\Xi_{\dhat}(\ze) <\frac\ep2$.  And since $\ze$
  is a Cauchy point of $X$, we have an $x\in X$ with $\ze_d(x)
  <\frac\ep2$.  Thus,
  \[ \textstyle \xi_d(x) = \Xi_{\dhat}(\xhat) \le \dhat(\xhat,\ze) + \Xi_{\dhat}(\ze) =
  \ze_d(x) + \Xi_{\dhat}(\ze) \le \frac\ep2 + \frac\ep2 = \ep \]
  Since this is true for all \ep, we have $\inf_{x\in X} \xi_d(x) = 0$.
\item If $d \ge d'$, then $\dhat \ge \dhat'$, so
  \[ \xi_d(x)  = \Xi_{\dhat}(\xhat) \ge \Xi_{\dhat'}(\xhat) = \xi_{d'}(x) \]
\end{enumerate}

Now I claim that $\Xi=\xihat$.  We know it's enough to show $\Xi_{\dhat}(\xi) = 0$ for
all $d$.  To show this, fix an $\ep>0$ and find an $x$ with
$\xi_d(x)<\ep$.  Then we have
\begin{equation}
  \Xi_{\dhat}(\xi) \;\le\; \Xi_{\dhat}(\xhat) + \dhat(\xhat,\xi)
  \;=\; \xi_d(x) + \xi_d(x)
  \;<\; 2\ep.
\end{equation}
Since this is true for all $\ep>0$, we must have $\Xi_d(\xi) = 0$, hence
$\Xi=\xihat$.

We can now give a definition of compactness.

\begin{defn}
  A gauge space is \textbf{compact} if it is totally bounded and
  complete.
\end{defn}

If you've seen compactness before, you may have seen a different
definition.  Ours is a bit of a cheat, but it is correct.

Finally, remember that we want to construct compactifications by
finding an equivalent totally bounded gauge, then completing it.  For
this to work, we need the following.

\begin{prop}
  The completion of a totally bounded space is totally bounded.
\end{prop}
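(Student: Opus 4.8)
The plan is to reduce total boundedness of \Xhat\ directly to that of $X$, via the Yoneda lemma (\autoref{thm:yoneda}) together with locatedness. Recall first that every metric in the gauge on \Xhat\ has the form \dhat\ for some $X$-metric $d$; hence it suffices, for each such $d$ and each $\ep>0$, to cover \Xhat\ by finitely many \dhat-balls of radius $\ep$. Since $X$ is totally bounded, I would apply the definition to the metric $d$ and the radius $\ep/2$ to obtain points $x_0,\dots,x_n\in X$ with $X = B_d(x_0,\ep/2)\cup\cdots\cup B_d(x_n,\ep/2)$. The claim to establish is then that the finitely many balls $B_{\dhat}(\widehat{x_0},\ep),\dots,B_{\dhat}(\widehat{x_n},\ep)$, centered at the \emph{represented} points, already cover all of \Xhat.

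To verify the claim, fix an arbitrary Cauchy point $\xi\in\Xhat$. By locatedness (condition (iii) in the definition of a Cauchy point) there is a point $x\in X$ with $\xi_d(x)<\ep/2$, and by the chosen cover of $X$ there is an index $i$ with $d(x_i,x)<\ep/2$. The key computation uses the Yoneda lemma to rewrite the distance in \Xhat\ from the represented point $\widehat{x_i}$ to $\xi$ as a value of $\xi$ itself, namely $\dhat(\widehat{x_i},\xi)=\xi_d(x_i)$. Combining this with the first triangle-inequality axiom (condition (i)) of a Cauchy point gives $\dhat(\widehat{x_i},\xi)=\xi_d(x_i)\le d(x_i,x)+\xi_d(x)<\ep/2+\ep/2=\ep$, so that $\xi\in B_{\dhat}(\widehat{x_i},\ep)$. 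As $\xi$ was arbitrary, these balls cover \Xhat, which is what we wanted.

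There is no genuine obstacle here; the whole content of the argument is the realization that it suffices to place the ball centers at the \emph{represented} points $\widehat{x_i}$, at which point Yoneda turns the completion-metric \dhat\ back into the raw data $\xi_d$ of the Cauchy point, and locatedness guarantees that this data dips arbitrarily close to $0$ somewhere in $X$. The only step requiring a moment's bookkeeping is the splitting of $\ep$ into two halves: one half is spent approximating $\xi$ by an honest point $x\in X$ (locatedness), and the other half is spent moving from $x$ to the nearest cover-center $x_i$ (total boundedness of $X$).
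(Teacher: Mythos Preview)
Your argument is correct, and it is exactly the intended one: the paper states this proposition but defers its proof to the exercises, so there is no alternative approach to compare. Your use of locatedness plus Yoneda to pull the centers back to represented points, with the $\ep/2$ split, is precisely the standard solution.
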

\begin{proof}
  Exercise.
\end{proof}

\section{Exercises on completeness}
\label{sec:exerc-compl}

\begin{ex}
  Prove that the completion of a totally bounded gauge space is
  totally bounded (hence compact).
\end{ex}

\begin{ex}
  Prove that \Xhat is always separated.
\end{ex}

\begin{ex}
  Prove that a Cauchy point $\xi$ is represented by $z$ if and only if
  for every gauge metric $d$ and every $\ep>0$, there is a point $x$
  with $\xi_d(x)\le \ep$ and $d(x,z)\le\ep$.
\end{ex}


\begin{ex}
  Prove that \lR\ is complete with the usual metric $d(x,y)=|x-y|$,
  and that it is uniformly equivalent to the completion of its
  subspace \lQ.  \emph{(Hint: you'll need Dedekind-completeness or
    Cauchy-completeness of \lR.)}
\end{ex}

\begin{ex}
  Prove that \lR\ is \emph{not} complete with the metric \[d^*(x,y) =
  \big|\arctan x-\arctan y\,\big|\] (which is topologically equivalent to the
  usual metric).
\end{ex}


\begin{ex}
  Prove that if $X$ is complete, then every Cauchy sequence converges
  to some point.
\end{ex}

\begin{ex}
  Suppose $X$ is a metric space, i.e.\ a gauge space with only one
  gauge metric.  Prove that if every Cauchy sequence in $X$ converges
  to some point, then $X$ is complete.
\end{ex}

\begin{exstar}
  Find an example of a non-complete gauge space in which every Cauchy
  sequence converges.
\end{exstar}

\begin{exstar}\label{ex:tbdd-cauchy}
  Suppose $X$ is a totally bounded gauge space and $\xi,\ze$ are two
  Cauchy points of $X$.  Prove that if $\xi\approx A \Leftrightarrow
  \ze \approx A$ for all $A\subseteq X$, then $\xi=\ze$.  (Of course,
  $\xi\approx A$ means that $\inf_{a\in A} \xi_d(a) = 0$ for all
  $X$-metrics $d$.)
\end{exstar}

\begin{exstar}\label{ex:cplt-xtn}
  Prove that if $f:X\to Y$ is uniformly continuous, then it extends
  uniquely to a uniformly continuous function $\fhat:\Xhat\to\Yhat$.
\end{exstar}

\begin{exstar}\label{ex:cplt-unif}
  Prove that completeness is a uniform property: if $X$ and $Y$ are
  uniformly isomorphic and $X$ is complete, then so is $Y$.
\end{exstar}

\section{Exercises on compactness}
\label{sec:ex-cpt}

\begin{ex}\label{thm:metric-seqcpt}
  Prove that a metric space is compact if and only if every sequence
  has a convergent subsequence.
\end{ex}

\begin{ex}\label{ex:inf-cube}
  Let $X$ be $[0,1]^\om$; that is, the set of infinite sequences of
  real numbers in $[0,1]$.  Equip it with the gauge defined by the
  metrics $d_N(x,y) = \max_{1\le i\le N} |x_i-y_i|$ for
  $N=0,1,2,\dots$ (this is the restriction of the gauge on $\lR^\om$
  considered in class).  Prove that $X$ is compact.
\end{ex}

\begin{ex}\label{ex:cpt-union}
  Let $K,L$ be compact subsets of $X$ (with respect to their induced
  gauges).  Prove that $K\cup L$ is compact.  Conclude by induction
  that the union of any finite number of compact subsets of $X$ is
  compact.
\end{ex}

\begin{ex}
  Prove that a gauge space $X$ is compact if and only if its separated
  quotient from \autoref{ex:sep-quot} is compact.
\end{ex}



\begin{exstar}[Tychonoff's Theorem]
  Let $\{X_\al\}_{\al\in A}$ be a family of gauge spaces and $X =
  \Pi_{\al\in A} X_\al$ their cartesian product.  By definition, that
  means the elements of $X$ are families $(x_\al)_{\al\in A}$ where
  $x_\al\in X_\al$ for all \al.  Each metric $d$ on some $X_\al$
  defines a metric on $X$ via $d_\al(x,y) = d(x_\al,y_\al)$, and the
  collection of all these metrics generates a gauge on $X$ (by taking
  finite maxes).  For each of completeness, total boundedness, and
  compactness, prove that if each $X_\al$ has the property in
  question, so does $X$.
\end{exstar}

\section{The one-point compactification}
\label{sec:one-point}

Recall our procedure for constructing compactifications: first find a
topologically equivalent, totally bounded gauge, then complete it.  In
the next section we'll construct the Stone-\v{C}ech compactification.
But first, as a warm-up, let's try adding exactly one point at
infinity, as in stereographic projection.

It turns out that we need some condition on $X$ for this to work.
Define a gauge space to be \textbf{locally compact} if for every point
$x$ and open ball $B$ containing $x$, there exists a compact closed ball $\Bbar$
with $x\in\Bbar\subseteq B$.  Here a closed ball is a set of the form
\[\Bbar = \overline{B}_d(x,\ep) = \{ y \mid d(x,y)\le \ep\}.
\]

Let $X$ be a locally compact, but noncompact, gauge space.  For every
compact $K\subseteq X$ and gauge metric $d$, denote by $\neg K$ the
complement of $K$ in $X$, and define $d_K$ by
\begin{equation*}
  d_K(x,y) =
  \min\Big(d(x,y),\; d(x,\neg K)+d(\neg K,y)\Big).
\end{equation*}
By \autoref{ex:collapsing-metric}, $d_K$ is a metric.  Clearly if
$d\ge d'$ and $K\supset K'$, then $d_K\ge d'_{K'}$.  Thus, using
\autoref{ex:cpt-union}, we conclude that the collection of all the
$d_K$, as $d$ and $K$ vary, is a new gauge on $X$.

\begin{thm}\label{thm:1pt-tb-eqv}
  When $X$ is locally compact, this new gauge is totally bounded and
  topologically equivalent to the original gauge.
\end{thm}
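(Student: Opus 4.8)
The plan is to verify the two assertions separately, using the lemma that characterizes topological equivalence by open-ball refinement for the second one. The single fact driving everything is the shape of the collapsing metric: since $d_K(x,y)=\min\big(d(x,y),\,d(x,\neg K)+d(\neg K,y)\big)$, we always have $d_K\le d$, and moreover any two points of $\neg K$ satisfy $d_K=0$ (because $d(\cdot,\neg K)$ vanishes on $\neg K$). So $d_K$ behaves like $d$ on points buried deep inside $K$, but crushes all of $\neg K$ together into a single ``point at infinity''.

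For total boundedness, fix a new gauge metric $d_K$ and $\ep>0$. Since $X$ is noncompact and $K$ is compact, $K\neq X$, so $\neg K$ is nonempty; pick $p\in\neg K$. Being compact, $K$ is totally bounded, so finitely many balls $B_d(x_0,\ep),\dots,B_d(x_n,\ep)$ cover $K$; as $d_K\le d$, each is contained in $B_{d_K}(x_i,\ep)$. Finally every $y\in\neg K$ has $d_K(p,y)=0<\ep$, so the single ball $B_{d_K}(p,\ep)$ swallows all of $\neg K$. Hence $X=B_{d_K}(x_0,\ep)\cup\dots\cup B_{d_K}(x_n,\ep)\cup B_{d_K}(p,\ep)$ is a finite cover, and $X$ is totally bounded in the new gauge.

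For topological equivalence I would apply the ball-refinement lemma. One direction is immediate from $d_K\le d$: given any new ball $B_{d_K}(x,\ep)$, the original ball $B_d(x,\ep)$ sits inside it. The other direction is the substance of the theorem: given an original ball $B_d(x,\ep)$, I must produce a new ball $B_{d'_K}(x,\ep')\subseteq B_d(x,\ep)$. This is where local compactness enters. Its content is that $x$ has a compact neighborhood inside $B_d(x,\ep)$: there are a gauge metric $d'$, a radius $\de>0$, and a compact set $K$ with $\overline{B}_{d'}(x,\de)\subseteq K\subseteq B_d(x,\ep)$. Then $d'(x,\neg K)\ge\de$, since every $z\notin K$ lies outside $\overline{B}_{d'}(x,\de)$. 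Now take $\ep'=\min(\ep,\de)$ and consider $B_{d'_K}(x,\ep')$: if $d'_K(x,y)<\ep'$, the branch $d'(x,\neg K)+d'(\neg K,y)\ge\de\ge\ep'$ already exceeds $d'_K(x,y)$, so the minimum must be realized by $d'(x,y)$; hence $d'(x,y)<\ep'\le\de$, giving $y\in\overline{B}_{d'}(x,\de)\subseteq K\subseteq B_d(x,\ep)$. Thus $B_{d'_K}(x,\ep')\subseteq B_d(x,\ep)$, as required.

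The main obstacle is precisely this last step, and more specifically extracting from local compactness that $x$ is \emph{interior} to a compact set $K$ lying inside the given ball --- i.e.\ that some closed ball $\overline{B}_{d'}(x,\de)$ around $x$ fits in $K$, equivalently that $d'(x,\neg K)>0$. This positivity is exactly what forces the collapsing metric $d'_K$ to coincide with the honest distance $d'$ in a neighborhood of $x$, so that collapsing $\neg K$ does no topological damage near $x$; without it (for an $x$ on the ``boundary'' of $K$) the argument breaks, since the second branch of the minimum could drop below $\ep'$. Everything else --- the finite cover for total boundedness and the trivial $d_K\le d$ inclusion --- is routine once this compact neighborhood is in hand.
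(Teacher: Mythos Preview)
Your proof is correct and follows essentially the same approach as the paper's. The only difference is cosmetic: the paper simply takes $K$ to be the compact closed ball $\overline{B}_{d'}(x,\de)$ itself (which is what the definition of local compactness gives directly), so that $d'(x,\neg K)\ge\de$ immediately yields $B_{d'_K}(x,\de)=B_{d'}(x,\de)\subseteq B$, avoiding the intermediate compact set and the $\min(\ep,\de)$.
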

\begin{proof}
  Given any $d_K$ and $\ep>0$, choose a finite $(d,\ep)$-sized cover
  of $K$ (which exists since $K$ is totally bounded).  Then together
  with $\neg K$, which is certainly $(d_K,\ep)$-sized (all points in
  it are $d_K$-distance 0 from each other), we have a
  $(d_K,\ep)$-sized cover of $X$ in the new gauge.  Thus, the new
  gauge is totally bounded.

  For topological equivalence, since $d_K(x,y)\le d(x,y)$, every ball
  in the new gauge trivially contains a ball in the old gauge.
  Conversely, suppose given $x$ and an open ball $x\in B$ in the old
  gauge.  Choose $x\in \overline{B}_d(x,\ep)\subseteq B$ as in the
  definition of local compactness, and consider the ball
  $B_{d_K}(x,\ep)$ in the new gauge, where $K=\overline{B}_d(x,\ep)$.
  Since $d(x,\neg K)\ge \ep$, we have
  \[B_{d_K}(x,\ep) = B_{d}(x,\ep) \subseteq B.\] This proves topological
  equivalence.
\end{proof}

\begin{thm}
  There is exactly one non-represented Cauchy point in this new gauge
  space.
\end{thm}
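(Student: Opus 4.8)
The plan is to exhibit one explicit non-represented Cauchy point---the ``point at infinity''---and then show every non-represented Cauchy point equals it. For existence, define $\om$ by $\om_{d_K}(x) = d(x,\neg K)$ (the $d_K$-distance from $x$ to $\neg K$, which one checks equals $d(x,\neg K)$ since all points of $\neg K$ are at $d_K$-distance $0$ from one another). First I would verify the four Cauchy-point axioms: locatedness is immediate because $\neg K\neq\emptyset$ (as $X$ is noncompact) and $\om_{d_K}$ vanishes there; the two triangle inequalities are the standard ``distance to a set'' estimates, the second one crucially using that $\neg K$ is collapsed by $d_K$; and the fourth axiom follows by taking $d_3\ge\max(d_1,d_2)$ and $K_3=K_1\cup K_2$, compact by \autoref{ex:cpt-union}, together with the monotonicity $d_K\ge d'_{K'}$ for $d\ge d'$, $K\supseteq K'$. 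That $\om$ is not represented is exactly local compactness: for any $z$ choose, via a compact closed ball $\overline B_d(z,\ep)$ around $z$, a compact $K$ with $d(z,\neg K)\ge\ep>0$, so $\om_{d_K}(z)>0$, and a Cauchy point is represented by $z$ only if all its functions vanish at $z$.

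For uniqueness, let $\xi$ be any Cauchy point. Since $d_K(x,y)=0$ for $x,y\in\neg K$, the first triangle inequality forces $\xi_{d_K}$ to be constant on $\neg K$; call this value $c_{d,K}$. A short computation shows $d_K(x,y)=d(x,\neg K)$ for every $x\in K$, $y\in\neg K$, and feeding $y\in\neg K$ into the two triangle inequalities for $\xi$ gives the key estimate $|\xi_{d_K}(x)-d(x,\neg K)|\le c_{d,K}$ for all $x\in X$; that is, $\xi$ differs from $\om$ by at most $c_{d,K}$ in each metric $d_K$. Consequently, if $c_{d,K}=0$ for all $d$ and $K$, then $\xi=\om$. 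So it remains to rule out the alternative: I must show that if $c_{d_0,K_0}>0$ for some $d_0,K_0$, then $\xi$ is in fact represented (and hence is not a counterexample to uniqueness).

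This last step is where the real work lies. Assuming $c:=c_{d_0,K_0}>0$, locatedness together with $\xi_{(d_0)_{K_0}}\equiv c>0$ on $\neg K_0$ forces $\inf_{x\in K_0}\xi_{(d_0)_{K_0}}(x)=0$. For $\ep<c/2$ I would study the near-minimizer sets $N_\ep=\{x:\xi_{(d_0)_{K_0}}(x)\le\ep\}\subseteq K_0$. Comparing with points of $\neg K_0$ via the first triangle inequality shows every $x\in N_\ep$ has $d_0$-depth $d_0(x,\neg K_0)\ge c-\ep$, so that the ``$\min$'' defining $(d_0)_{K_0}$ is inactive on $N_\ep$; the second triangle inequality for $\xi$ then gives $d_0(x,y)\le\xi_{(d_0)_{K_0}}(x)+\xi_{(d_0)_{K_0}}(y)\le2\ep$ for $x,y\in N_\ep$. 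Replacing $(d_0,K_0)$ by $(\max(d_0,d),K_0\cup K)$---still in this case by monotonicity of $c$---shows these sets have diameter $\le2\ep$ in \emph{every} gauge metric. Thus the shrinking, filtered family $\{N_\ep\}$ determines a Cauchy point of the subspace $K_0$, which is complete because $K_0$ is compact; its representative $z\in K_0$ satisfies $\xi_{e_L}(z)=0$ for a cofinal family of metrics $e_L$, hence $\xi_{d_K}(z)=0$ for all $d,K$ by \autoref{thm:fourth}, so $\xi=\zhat$.

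The main obstacle is precisely this construction of the representing point. The subtlety is that $\xi$ only ever ``sees'' the capped distances $d_K$, whereas representability is a statement about the original metrics; the depth estimate $d_0(x,\neg K_0)\ge c-\ep$ is what makes the cap inactive near the minimizer and lets me recover genuine $d$-Cauchyness. A secondary care point is that $K_0$ is a general compact gauge space, not a metric space, so I cannot extract a convergent subsequence; instead I must package the near-minimizers as a Lawvere-style Cauchy point of $K_0$ and invoke completeness, and then check that the resulting $z$ is a \emph{simultaneous} zero of all the $\xi_{d_K}$.
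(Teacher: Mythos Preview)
Your approach is correct and handles uniqueness by a genuinely different decomposition. The paper argues metric-by-metric: fixing $(d,K)$, it observes a dichotomy---either $\inf_{k\in K}\xi_{e_L}(k)=0$ for every new-gauge metric $e_L$ (so $\xi$ restricts to a Cauchy point of $K$ and is represented by compactness), or locatedness fails for some $e_L$, in which case for small $\ep$ any simultaneous near-minimizer $z$ for $\xi_{e_L}$ and $\xi_{d_K}$ must lie in $\neg K$, and two applications of the triangle inequality then give $\xi_{d_K}=d(-,\neg K)$ directly. You instead isolate the constant $c_{d,K}=\xi_{d_K}|_{\neg K}$, prove the uniform estimate $|\xi_{d_K}-\om_{d_K}|\le c_{d,K}$ for all $(d,K)$ at once, and then show that a single positive $c_{d_0,K_0}$ forces representation via your depth argument. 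The paper's route is shorter---once locatedness in $K$ fails, the computation of $\xi_{d_K}$ is two lines---while your estimate is a clean quantitative bound, and your depth argument (showing $(d_0)_{K_0}=d_0$ on $N_\ep$, hence recovering genuine $d$-Cauchyness) goes further than the paper in justifying why compactness of $K_0$ in the \emph{original} gauge suffices, a point the paper passes over with the bare phrase ``since $K$ is compact''. Your remark that non-representedness of $\om$ is \emph{exactly} local compactness is also more explicit than the paper's ``easy to check''.
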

\begin{proof}
  For existence, set $\xi_{d_K}(x) = d(x,\neg K)$.  It is easy to
  check that this defines a non-represented Cauchy point.  For
  uniqueness, suppose that $\xi$ is a Cauchy point.  Pick $K\subseteq
  X$ compact.  If $\inf_{k\in K} \xi_{d_L}(k) = 0$ for all $d_L$, then
  $\xi$ would induce a Cauchy point of $K$, and would thus be
  represented since $K$ is compact.  Therefore, there must be some
  $X$-metric $d$, compact $L\subseteq X$, and $\de>0$ such that
  $\xi_{d_L}(k)>\de$ for all $k\in K$.

  Now since $\xi$ is a Cauchy point, for any $\ep>0$, there is some
  $z\in X$ such that both $\xi_{d_L}(z)<\ep$ and $\xi_{d_K}(z)<\ep$.
  But by the above, as long as $\ep<\de$, this $z$ cannot be in $K$,
  so that $d(z,\neg K) = 0$.  Thus, for any $x\in X$ we have
  \[ d_K(x,z) \le d(x,\neg K).
  \]
  We therefore have
  \begin{equation}
    \xi_{d_K}(x) \le d_K(x,z) + \xi_{d_K}(z) \le d(x,\neg K) + \ep
  \end{equation}
  As this is true for all $\ep>0$, we have $\xi_{d_K} \le d(-,\neg
  K)$.  In particular, we have $\xi_{d_K}(y) \le d(y,\neg K) = 0$ for
  any $y\notin K$.  Thus, fixing such a $y$, for any $x\in X$ we have
  \begin{equation}
    d(x,\neg K) = d_K(x,y) \le \xi_{d_K}(x) + \xi_{d_K}(y) = \xi_{d_K}(x).
  \end{equation}
  and thus $\xi_{d_K} = d(-,\neg K)$.
\end{proof}


We call the completion of this new gauge the \textbf{one-point
  compactification} of $X$, since it contains only one point in
addition to $X$.  We write it as $X_\infty$ or $\al X$.
For example:
\begin{itemize}
\item $S^n$ is topologically isomorphic to the one-point
  compactification of $\lR^n$.
\item The one-point compactification of the infinite cylinder is the
  pinched torus.
\end{itemize}

\section{The Stone-\v Cech Compactification}
\label{sec:stone-cech}

Finally, we're ready to consider the question: what is the \emph{best}
compactification?  Since we compactify by finding a totally bounded
gauge and then completing, this question essentially boils down to
finding the best totally bounded gauge which is topologically
equivalent to the one we started with.  We proceed as follows.

Let $n\in\lN$, let $\ph\maps X\to [0,1]^n$ be any continuous function, and define a
metric on $X$ by
\[d_\ph(x,y) = d_n(\ph(x),\ph(y)),
\]
where $d_n(a,b) = \max_{1\le i\le n} |a_i-b_i|$ is one of the
canonical metrics on $[0,1]^n$.  The function $d_\ph$ is clearly a
metric on $X$, and the collection of all such metrics defines a gauge
on $X$.  (For filteredness, given $\ph_1\maps X\to [0,1]^n$ and
$\ph_2\maps X\to [0,1]^m$ let $\ph_3=(\ph_1,\ph_2)\maps X\to
[0,1]^{n+m}$.)

\begin{thm}\label{thm:stonecechgauge}
  This gauge is totally bounded and topologically equivalent to the
  original one.
\end{thm}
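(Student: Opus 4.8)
The plan is to treat the two assertions separately: first total boundedness, which is nearly immediate from the total boundedness of the cubes $[0,1]^n$, and then topological equivalence, for which I would invoke the ball-criterion Lemma for topological equivalence in \S\ref{sec:topology}.

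For total boundedness, the key observation is that every metric in the new gauge has the form $d_\ph$ for a single continuous $\ph\maps X\to[0,1]^n$, so it suffices to verify the total-boundedness condition for one such $d_\ph$ and a given $\ep>0$. Since $[0,1]^n$ with $d_n$ is totally bounded (cover it by a finite $\ep$-spaced grid), I would cover $\ph(X)$ by finitely many $d_n$-balls of radius $\ep/2$ centered at points $p_1,\dots,p_k$, discarding any whose ball misses $\ph(X)$, and for each surviving $p_j$ choose $x_j\in X$ with $d_n(\ph(x_j),p_j)<\ep/2$. The triangle-inequality estimate $d_\ph(x,x_j)=d_n(\ph(x),\ph(x_j))\le d_n(\ph(x),p_j)+d_n(p_j,\ph(x_j))<\ep$ then shows the balls $B_{d_\ph}(x_j,\ep)$ cover $X$.

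For topological equivalence I would check the two ball-containment conditions. The forward direction is purely formal: because $\ph$ is continuous and $p\mapsto d_n(\ph(x),p)$ is a (continuous) distance function on $[0,1]^n$, the composite $y\mapsto d_\ph(x,y)$ is a continuous real-valued function vanishing at $x$, so the \ep-\de\ characterization of continuity supplies an original metric $d$ and a $\de>0$ with $B_d(x,\de)\subseteq B_{d_\ph}(x,\ep)$; hence every new-gauge ball contains an old-gauge ball.

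The reverse direction carries the real content, and it is the step I expect to be the main obstacle: given an original metric $d$ and a ball $B_d(x,\ep)$, I must exhibit a genuinely \emph{continuous} $[0,1]$-valued function whose associated metric detects this ball. The natural choice is $\ph(y)=\min(d(x,y),\ep)/\ep$, regarded as a map $X\to[0,1]$; it lands in $[0,1]$ since $\min(d(x,y),\ep)\in[0,\ep]$, and its continuity follows from \autoref{ex:dist-cts} (continuity of $d(x,-)$) composed with the continuous map $t\mapsto\min(t,\ep)/\ep$ on \lR. Thus $d_\ph$ genuinely belongs to the new gauge. Since $\ph(x)=0$ one computes $d_\ph(x,y)=\min(d(x,y),\ep)/\ep$, so that $B_{d_\ph}(x,\tfrac12)=B_d(x,\ep/2)\subseteq B_d(x,\ep)$, giving the required containment. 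The only subtlety is ensuring that the truncation-and-rescaling really stays within $[0,1]$ and preserves continuity, which is precisely why the continuity of distance functions is the load-bearing input here.
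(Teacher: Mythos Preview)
Your proof is correct and follows essentially the same route as the paper's: continuity of $\ph$ for one direction of topological equivalence, a truncated distance-to-$x$ function for the other, and total boundedness of $[0,1]^n$ pulled back along $\ph$ for the last part. Your normalization $\min(d(x,y),\ep)/\ep$ is a harmless variant of the paper's $\min(d(x,y),1)$, and your total-boundedness argument is slightly more explicit in choosing centers in $X$, but the ideas are the same.
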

\begin{proof}
  Suppose first we are given $x\in X$, $\ph\maps X\to [0,1]^n$ and
  $\ep>0$; we want to show that $B_{d_\ph}(x,\ep)$ contains a ball in
  the original gauge.  But
  \[B_{d_\ph}(x,\ep)
  = \{y \mid d_n(\ph(x),\ph(y))<\ep\}
  = \ph\inv(B(\ph(x),\ep))
  \]
  so this follows since \ph\ is continuous.

  Now suppose we are given $x\in X$, $d$, and $\ep>0$, and we want to
  show that $B_d(x,\ep)$ contains a ball in the new gauge.  But by
  \autoref{ex:dist-cts}, the function $\psi(y) = \max(d(x,y),1)$ is a
  continuous map $X\to [0,1]$, and it is easy to check that if
  $d_\psi(x,y)<\ep$, then $d(x,y)<\ep$.

  Finally, to show the new gauge is totally bounded, let $\ep>0$ and
  $\ph\maps X\to [0,1]^n$ be given.  Cover $[0,1]^n$ by finitely many
  \ep-sized sets $A_j$; then the sets $\ph\inv(A_j)$ are a finite
  $(d_\ph,\ep)$-sized cover of $X$.
\end{proof}

\begin{cor}\label{cor:eqv-tbdd}
  Every gauge space is topologically equivalent to a totally bounded
  one.
\end{cor}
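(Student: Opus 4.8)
The plan is to read the corollary directly off \autoref{thm:stonecechgauge}. Given an arbitrary gauge space $(X,\cG)$, that theorem constructs an explicit gauge on $X$ --- the collection of metrics $d_\ph$ indexed by continuous maps $\ph\maps X\to[0,1]^n$ --- and asserts that it is both totally bounded and topologically equivalent to $\cG$. This single gauge is precisely the witness the corollary demands, so there is essentially nothing left to prove beyond naming it.

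Before invoking the theorem, I would double-check that its construction applies to \emph{every} gauge space with no side conditions. In contrast to the one-point compactification of \autoref{thm:1pt-tb-eqv}, which required local compactness, \autoref{thm:stonecechgauge} carries no hypotheses on $X$ beyond its being a gauge space. I would also confirm (as was already verified in the paragraph preceding that theorem) that the collection $\{d_\ph\}$ is genuinely a gauge: it is nonempty because the constant maps $X\to[0,1]^n$ are continuous, and it is filtered because any pair $\ph_1,\ph_2$ can be combined into $(\ph_1,\ph_2)\maps X\to[0,1]^{n+m}$.

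There is no real obstacle here, since all the substantive work was carried out in proving \autoref{thm:stonecechgauge}. The only point worth flagging is conceptual rather than technical: because total boundedness is \emph{not} a topological invariant, the corollary does not say that the original gauge is totally bounded, but rather that its topological-equivalence class always contains a totally bounded representative. That distinction is exactly what makes the corollary the right springboard for the compactification program, in which one replaces $\cG$ by this equivalent totally bounded gauge and then completes.
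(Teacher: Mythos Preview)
Your proposal is correct and is exactly the approach the paper takes: the corollary is stated immediately after \autoref{thm:stonecechgauge} with no separate proof, since the gauge $\cG_\be$ constructed there furnishes the required totally bounded, topologically equivalent gauge. Your additional sanity checks (no side hypotheses, the collection is nonempty and filtered) are all accurate and already handled in the paragraph preceding that theorem.
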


Call this new gauge $\cG_\be$.  The completion of $(X,\cG_\be)$ is the
\textbf{Stone-\v Cech compactification} of $X$; we write it as $\be
X$.  Since $\cG_\be$ is totally bounded, $\be X$ is compact.  Note
that the inclusion $X\to \be X$ is uniformly continuous when $X$ has
the new gauge $\cG_\be$, but only continuous when $X$ has the original
gauge.

The points of $\be X$ are, of course, the Cauchy points of $(X,\cG_\be)$.
Such a Cauchy point consists of, for each continuous $\ph\maps X\to
[0,1]^n$, a function $\xi_\ph\maps X\to [0,\infty)$, such that
\begin{itemize}
\item $\xi_\ph(x) + d_n(\ph(x),\ph(y)) \ge \xi_\ph(y)$,
\item $\xi_\ph(x) + \xi_\ph(y) \ge d_n(\ph(x),\ph(y))$,
\item $\inf_x \xi_\ph(x) = 0$, and
\item If $d_\ph \le d_\psi$, then $\xi_\ph \le \xi_\psi$.
\end{itemize}
We can reformulate this in a convenient way as follows.  For each
continuous $\ph\maps X\to [0,1]$ and $\ep>0$, define
\begin{align*}
  A_{\ph,\ep} &= \inf_{\xi_\ph(x)\le \ep} \ph(x)\\
  B_{\ph,\ep} &= \sup_{\xi_\ph(x)\le \ep} \ph(x).
\end{align*}
Clearly $A_{\ph,\ep}\le B_{\ph,\ep}$, and moreover by the triangle
inequality, $B_{\ph,\ep} - A_{\ph,\ep} \le 2\ep$.  Therefore, by a
standard property of real numbers, the intersection $\bigcap_\ep
[A_{\ph,\ep},B_{\ph,\ep}]$ consists of a single point.  Call that
point $\ph(\xi)$.  More generally, if $\ph=(\ph_1,\dots,\ph_n)\maps
X\to [0,1]^n$, then we write $\ph(\xi) =
(\ph_1(\xi),\dots,\ph_n(\xi))$.

Of course, a point $x\in X$ induces a Cauchy point of $(X,\cG_\be)$,
which we write as $\xchk$ to avoid confusion with $\xhat$, the induced
Cauchy point of $X$ in its original gauge.  It is easy to see that we
have $\ph(\xchk) = \ph(x)$ for any such $x$.

We have shown that any Cauchy point of $(X,\cG_\be)$ gives a way to
`evaluate' functions \ph.  We now want to show that the Cauchy point
$\xi$ is determined by the operation $\ph \mapsto \ph(\xi)$.
Specifically, we want to show that $\xi_\ph(y) = d_n(\ph(y),\ph(\xi))$ for
any $y$.  To show this, let $\ep>0$ and choose some $x$ with
$\xi_\ph(x)\le \ep$ (possible by locatedness of $\xi$).  Then the
reverse triangle inequality gives
\[\Big|\xi_\ph(y) - d_n(\ph(x),\ph(y))\Big| \le \ep.\]
Now, by definition of $x$ and the filteredness of a Cauchy point,
we have $\xi_{\ph_i}(x)\le \ep$ for all coordinates $1\le i\le n$, and
therefore $\ph_i(x) \in [A_{\ph_i,\ep}, B_{\ph_i,\ep}]$.  Hence, by
definition of $\ph_i(\xi)$, we have $|\ph_i(x)-\ph_i(\xi)|\le 2\ep$, and
thus $d_n(\ph(x),\ph(\xi)) \le 2\ep$.  It follows that
\[\Big|\xi_\ph(y) - d_n(\ph(\xi),\ph(y))\Big| \le 3\ep.\]
But since this holds for all $\ep>0$, and the left-hand side is
independent of \ep, it must be zero; thus $\xi_\ph(y) =
d_n(\ph(\xi),\ph(y))$ as desired.

Now suppose we have an ``evaluation'' operation taking any continuous
function $\ph:X\to [0,1]$ to a number $\ph(\xi)\in[0,1]$.  As before,
if $\ph=(\ph_1,\dots,\ph_n)\maps X\to [0,1]^n$, then we write
$\ph(\xi) = (\ph_1(\xi),\dots,\ph_n(\xi))$.  Then if we define
$\xi_\ph(y) = d_n(\ph(y),\ph(\xi))$, we get a collection of functions
satisfying the first two conditions to be a Cauchy point, and the
definition of $\ph(\xi)$ when $n>1$ immediately implies the fourth
condition.  Thus, to obtain a Cauchy point in this way we only need to
ensure locatedness.  We summarize this as follows:

\begin{lem}\label{thm:cauchy-beta}
  To give a Cauchy point of $(X,\cG_\be)$ is equivalent to giving, for
  every continuous $\ph:X\to [0,1]$, a number $\ph(\xi)\in [0,1]$, such
  that
  \begin{quote}
    For any finite family $(\ph_i)_{1\le i\le n}$ and
    any $\ep>0$, there exists a point $x\in X$ such that $|\ph_i(x) -
    \ph_i(\xi)|<\ep$ for all $i$.
  \end{quote}
\end{lem}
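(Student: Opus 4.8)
The statement packages the two constructions assembled in the preceding paragraphs into a single bijection, so the plan is to exhibit both maps explicitly and then check that they are mutually inverse. Concretely, I would define a map from Cauchy points of $(X,\cG_\be)$ to evaluation operations $\ph \mapsto \ph(\xi)$, a map in the reverse direction, verify that each construction lands in the target class, and finally verify the two round-trips are the identity. Almost all of the analytic content — the well-definedness of $\ph(\xi)$ via the nested intervals $[A_{\ph,\ep},B_{\ph,\ep}]$ and the key identity $\xi_\ph(y) = d_n(\ph(\xi),\ph(y))$ — has already been carried out, so the work here is largely organizational.

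For the forward direction I would send a Cauchy point $\xi$ to the evaluation $\ph(\xi) := \bigcap_\ep [A_{\ph,\ep},B_{\ph,\ep}]$, which is a single point of $[0,1]$ because $B_{\ph,\ep} - A_{\ph,\ep} \le 2\ep$ and these intervals are nested. To see that the displayed locatedness condition holds, given $\ph_1,\dots,\ph_n$ and $\ep>0$ I would set $\ph=(\ph_1,\dots,\ph_n)$ and use the identity $\xi_\ph(y) = d_n(\ph(\xi),\ph(y))$ together with locatedness of the Cauchy point, $\inf_x \xi_\ph(x)=0$, to produce an $x$ with $\xi_\ph(x)<\ep$; since $\xi_\ph(x) = \max_i |\ph_i(\xi)-\ph_i(x)|$, this $x$ satisfies $|\ph_i(x)-\ph_i(\xi)|<\ep$ for every $i$.

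For the backward direction I would send an evaluation satisfying the condition to the collection $\xi_\ph(y) = d_n(\ph(y),\ph(\xi))$. The first two Cauchy-point axioms are instances of the triangle inequality for $d_n$, and the fourth follows at once from the coordinatewise definition of $\ph(\xi)$ when $n>1$; the only axiom that actually uses the hypothesis is locatedness, which I would obtain by applying the displayed condition to the single function $\ph$ to get an $x$ with $|\ph(x)-\ph(\xi)|<\ep$, i.e. $\xi_\ph(x)<\ep$, so that $\inf_x \xi_\ph(x)=0$.

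Finally I would check mutual inverseness. Starting from a Cauchy point and running both constructions recovers $\xi_\ph(y)=d_n(\ph(\xi),\ph(y))$, which is precisely the identity established above, so nothing is lost. Starting from an evaluation, I would compute $A_{\ph,\ep}$ and $B_{\ph,\ep}$ from $\xi_\ph(x)=|\ph(x)-\ph(\xi)|$: the constraint $\xi_\ph(x)\le\ep$ forces $\ph(x)\in[\ph(\xi)-\ep,\ph(\xi)+\ep]$, so $[A_{\ph,\ep},B_{\ph,\ep}]\subseteq[\ph(\xi)-\ep,\ph(\xi)+\ep]$, and the intersection over $\ep$ is the single point $\ph(\xi)$. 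The one place I would be careful is exactly this last step: the nested intervals must remain nonempty so the intersection does not collapse to the empty set, and it is precisely the displayed locatedness hypothesis that guarantees a point $x$ in each constraint set. That nonemptiness is the main (and essentially the only non-bookkeeping) obstacle in the argument.
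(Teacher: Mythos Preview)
Your proposal is correct and follows essentially the same route as the paper: the paper's ``proof'' is the discussion immediately preceding the lemma, which carries out exactly your forward and backward constructions and establishes the key identity $\xi_\ph(y)=d_n(\ph(\xi),\ph(y))$. The only difference is organizational: you make the two round-trip checks explicit, whereas the paper leaves the second (evaluation $\to$ Cauchy $\to$ evaluation) implicit. One small wording point: in the backward locatedness check you speak of applying the displayed condition ``to the single function $\ph$'', but for $\ph:X\to[0,1]^n$ with $n>1$ you must apply it to the finite family of coordinate functions $\ph_1,\dots,\ph_n$; the conclusion is the same.
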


In other words, the points of $\be X$ are the operations $\ph\mapsto
\ph(\xi)$ which are arbitrarily closely approximable by (evaluation at)
points of $X$.  This gives another way of viewing them as `virtual
points' of the space $X$.

We note in passing that \autoref{thm:cauchy-beta} can be reformulated
in the following concise way.  Let $C(X,[0,1])$ denote the set of
continuous functions $X\to [0,1]$, and consider the space
$[0,1]^{C(X,[0,1])}$ with the pointwise gauge as in
\autoref{thm:pointwise}.  There is a continuous function $e:X\to
[0,1]^{C(X,[0,1])}$ defined by $e(x)_\ph = \ph(x)$, and the points of
$\be X$ can be identified with the points $\xi\in [0,1]^{C(X,[0,1])}$ such
that $e(X) \approx \xi$.

The following lemma will also be useful.

\begin{lem}\label{thm:becauchy-le}
  Let $\xi$ be a Cauchy point of $(X,\cG_\be)$.
  \begin{enumerate}
  \item If $\ph,\psi\maps X\to [0,1]$ are continuous with $\ph \le
    \psi$, then $\ph(\xi)\le\psi(\xi)$.\label{item:becle1}
  \item If $\psi\maps [0,1]^n \to [0,1]$ and $\ph\maps X\to [0,1]^n$
    are continuous, then we have $\psi(\ph(\xi)) = (\psi\ph)(\xi)$.
  \end{enumerate}
\end{lem}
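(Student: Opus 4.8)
The plan is to derive both parts from the approximation (locatedness) property recorded in \autoref{thm:cauchy-beta}: for any finite family of continuous functions $X\to[0,1]$ and any tolerance, there is a single point of $X$ that simultaneously approximates all the values $\ph_i(\xi)$. The whole art in each part is choosing the right finite family so that one approximating point does all the work, after which everything reduces to the triangle inequality and a limit as the tolerance goes to zero.

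For part (i), I would fix $\ep>0$ and apply the approximation property to the two-element family $\{\ph,\psi\}$, obtaining a point $x\in X$ with $|\ph(x)-\ph(\xi)|<\ep$ and $|\psi(x)-\psi(\xi)|<\ep$. The hypothesis $\ph\le\psi$ then gives the chain $\ph(\xi) < \ph(x) + \ep \le \psi(x) + \ep < \psi(\xi) + 2\ep$. Since $\ep>0$ is arbitrary, this forces $\ph(\xi)\le\psi(\xi)$.

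For part (ii), which is the substantive half, I would first invoke continuity of $\psi$ at the point $\ph(\xi)\in[0,1]^n$: given $\ep>0$, there is a $\de>0$ such that $d_n(a,\ph(\xi))<\de$ implies $|\psi(a)-\psi(\ph(\xi))|<\ep$. I would then apply the approximation property to the \emph{enlarged} finite family $\{\ph_1,\dots,\ph_n,\,\psi\ph\}$ (noting that $\psi\ph\maps X\to[0,1]$ is continuous as a composite) with tolerance $\min(\de,\ep)$, producing a single $x\in X$ with $|\ph_i(x)-\ph_i(\xi)|<\de$ for every $i$ and also $|(\psi\ph)(x)-(\psi\ph)(\xi)|<\ep$. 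The first batch of estimates says precisely that $d_n(\ph(x),\ph(\xi))<\de$, so the choice of $\de$ yields $|\psi(\ph(x))-\psi(\ph(\xi))|<\ep$; since $\psi(\ph(x))=(\psi\ph)(x)$ by ordinary composition, the triangle inequality gives $|(\psi\ph)(\xi)-\psi(\ph(\xi))| < 2\ep$. Letting $\ep\to 0$ yields $(\psi\ph)(\xi)=\psi(\ph(\xi))$.

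The only real obstacle is part (ii), and it is overcome by this bundling trick: placing the coordinate functions $\ph_1,\dots,\ph_n$ and the composite $\psi\ph$ into \emph{one} finite family forces the same approximating point $x$ to control both the ``inner'' discrepancy $d_n(\ph(x),\ph(\xi))$, which is fed into the continuity of $\psi$, and the ``outer'' discrepancy between $(\psi\ph)(x)$ and $(\psi\ph)(\xi)$. Without this simultaneity one could not link the value $\psi(\ph(\xi))$, computed coordinatewise from the $\ph_i(\xi)$, to the independently-defined evaluation $(\psi\ph)(\xi)$ of the single composite. The remaining manipulations are routine and require no further structure.
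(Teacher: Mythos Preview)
Your proof is correct and follows essentially the same approach as the paper's. Both arguments hinge on choosing a single point $x\in X$ that simultaneously approximates the relevant finite family (namely $\{\ph,\psi\}$ in part~(i) and $\{\ph_1,\dots,\ph_n,\psi\ph\}$ in part~(ii)), then invoking continuity of $\psi$ and the triangle inequality before letting the tolerance tend to zero; the only cosmetic difference is that the paper phrases the simultaneous approximation via the Cauchy-point functions $\xi_{\ph,\psi}$ and $\xi_{\ph,\psi\ph}$ rather than via the equivalent characterization in \autoref{thm:cauchy-beta}.
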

\begin{proof}
  For the first statement, observe that for any $\ep>0$ there is an
  $x$ with $\xi_{\ph,\psi}(x)<\ep$, and hence both $\xi_\ph(x)<\ep$ and
  $\xi_\psi(x)<\ep$.  Therefore $\ph(x)\in [A_{\ph,\ep},B_{\ph,\ep}]$
  and $\psi(x)\in [A_{\psi,\ep},B_{\psi,\ep}]$.  Since
  $\ph(x)\le\psi(x)$, we have $A_{\ph,\ep}\le B_{\psi,\ep}$, from
  which $\ph(\xi)\le \psi(\xi)$ follows (taking $\ep\to 0$).

  Now consider the second statement.  Fix an $\ep>0$ until further
  notice.  By definition of $(\psi\ph)(\xi)$, there is a $\de>0$ such
  that if $\xi_{\psi\ph}(x)<\de$, then $|\psi\ph(x)-\psi\ph(\xi)|<\ep$.
  And since $\psi$ is continuous, there is a $\de'>0$ such that if
  $d_n(\ph (x), \ph(\xi))<\de'$, then $|\psi(\ph(x)) -
  \psi(\ph(\xi))|<\ep$.  But by definition of $\ph(\xi)$, there is then a
  $\de''$ such that if $\xi_\ph(x)<\de''$, then $d_n(\ph (x),
  \ph(\xi))<\de'$, and hence also $|\psi(\ph(x)) - \psi(\ph(\xi))|<\ep$.

  Now, since $\xi$ is a Cauchy point, there is an $x$ such that $\xi_{\ph,
    \psi\ph}(x)< \min(\de,\de'')$.  It follows that we have both
  $|\psi\ph(x)-\psi\ph(\xi)|<\ep$ and $|\psi(\ph(x)) -
  \psi(\ph(\xi))|<\ep$, and hence, by the triangle inequality,
  $|(\psi\ph)(\xi) - \psi(\ph(\xi))|<2\ep$.  Since this is true for all
  $\ep>0$, we must have $(\psi\ph)(\xi) = \psi(\ph(\xi))$, as desired.
\end{proof}

Finally, the following theorem is one way to express the idea that the
Stone-\v Cech compactification the \emph{best} compactification.

\begin{thm}[Extension Theorem]\label{thm:extension}
  Let $g\maps X\to Y$ be any continuous map, where $Y$ is compact.
  Then there exists a continuous map $\gbar\maps \be X\to Y$ such that
  for all $x\in X$, we have $\gbar(\xchk) \approx g(x)$.  Moreover, if
  $h:\be X \to Y$ is any other continuous map with this property, then
  for all $\xi\in \be X$ we have $\gbar(\xi)\approx h(\xi)$.
\end{thm}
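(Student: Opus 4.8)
The plan is to produce $\gbar$ by extending $g$ across the completion, having first recognised that for compact $Y$ the Stone-\v Cech gauge is already complete. I would begin by noting that $g$, regarded as a map $(X,\cG_\be)\to(Y,\cG_\be)$ between the Stone-\v Cech gauges, is not merely continuous but \emph{uniformly} continuous: for a generating metric $d_\psi$ on $Y$ arising from a continuous $\psi\maps Y\to[0,1]^n$, one has $d_\psi(g(x),g(x'))=d_n(\psi\circ g(x),\psi\circ g(x'))=d_{\psi\circ g}(x,x')$, and $\psi\circ g\maps X\to[0,1]^n$ is continuous, so $d_{\psi\circ g}\in\cG_\be$; thus $\de=\ep$ works uniformly. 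By \autoref{ex:cplt-xtn}, $g$ then extends uniquely to a uniformly continuous map $\ghat\maps\be X\to\be Y$ with $\ghat(\xchk)=\widecheck{g(x)}$ for $x\in X$.

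The crux is to show that the canonical embedding $Y\to\be Y$ is surjective up to $\approx$, i.e.\ that $(Y,\cG_\be)$ is already complete, so that $\ghat$ may be read as a map into $Y$ itself; this is the one place compactness of $Y$ is used essentially. I would prove it by representing an arbitrary $\cG_\be$-Cauchy point $\eta$ of $Y$ (an evaluation $\psi\mapsto\psi(\eta)$ as in \autoref{thm:cauchy-beta}) by an honest point. Since $Y$ is totally bounded, every original gauge metric $e$ on $Y$ has finite diameter $M_e$ (any two points lie in balls of a finite $\ep$-cover, hence are within $2\ep$ plus the largest of the finitely many centre-to-centre distances), so $\psi^e_y:=e(y,-)/M_e$ is a continuous map $Y\to[0,1]$ by \autoref{ex:dist-cts}, and I set $\ze_e(y):=M_e\,\psi^e_y(\eta)$. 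The two triangle inequalities and the filteredness clause for $\ze$ follow from monotonicity and functoriality of evaluation (\autoref{thm:becauchy-le}) together with the approximation property of $\eta$, while \emph{locatedness} of $\ze$ is exactly where total boundedness re-enters: covering $Y$ by finitely many balls $B_e(y_k,\ep)$ and applying the approximation property to the finite family $(\psi^e_{y_k})_k$ yields some $y_k$ with $\ze_e(y_k)$ arbitrarily small. Thus $\ze$ is a Cauchy point of $Y$ in its \emph{original} gauge, and completeness of the compact space $Y$ represents it by some $y_0$; a final $\ep$-argument using continuity of each $\psi$ at $y_0$ together with $\ze_e(y_0)=0$ shows $\psi(\eta)=\psi(y_0)$ for every $\psi$, so $\eta$ is represented by $y_0$. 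Composing this identification $\be Y\iso Y$ (a uniform embedding by the Yoneda \autoref{thm:yoneda}, now surjective up to $\approx$) with $\ghat$ produces $\gbar$, which is continuous and satisfies $\gbar(\xchk)\approx g(x)$.

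For uniqueness I would use no compactness at all, only that $X$ is dense in $\be X$ (every $\xi\in\be X$ has $\xi\approx X$). Suppose $h\maps\be X\to Y$ is continuous with $h(\xchk)\approx g(x)$ for all $x$. Fixing $\xi$, a $Y$-metric $e$, and $\ep>0$, continuity of $\gbar$ and of $h$ at $\xi$ (combined using filteredness) furnishes a single $\be X$-metric and a $\de$ controlling both $e(\gbar(\xi),\gbar(\xi'))$ and $e(h(\xi),h(\xi'))$ within $\ep/3$ whenever $\xi'$ is $\de$-close to $\xi$; choosing $\xi'=\xchk$ with $x$ close enough by density and using $\gbar(\xchk)\approx g(x)\approx h(\xchk)$, the triangle inequality gives $e(\gbar(\xi),h(\xi))<\ep$. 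As $e$ and $\ep$ were arbitrary, $\gbar(\xi)\approx h(\xi)$.

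The main obstacle is the middle step, the completeness of $(Y,\cG_\be)$: everything else is either a uniform-continuity bookkeeping calculation or a soft density argument, whereas here one must genuinely convert the abstract ``virtual point'' $\eta$ into an actual point of $Y$. This conversion fails without both halves of compactness — total boundedness to bound the metrics and to force locatedness of the transported Cauchy point, and completeness to supply the representing point $y_0$.
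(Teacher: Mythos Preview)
Your argument is correct, and it is a genuinely different route from the paper's proof. The paper constructs $\gbar(\xi)$ \emph{directly}: for each $\xi\in\be X$ it builds a Cauchy point of $Y$ in the \emph{original} gauge via the functions $\psi'_{d,g,y}(x)=\min(d(g(x),y),1)$, invoking a technical lemma (\autoref{thm:cauchy-smalldet}) to patch together these partially-defined data into a bona fide Cauchy point, and then verifies continuity of $\gbar$ by hand. Your proof instead factors through $\be Y$: you observe that $g$ is uniformly continuous between the Stone--\v Cech gauges, invoke \autoref{ex:cplt-xtn} to get $\ghat\maps\be X\to\be Y$, and then prove the standalone result that $(Y,\cG_\be)$ is complete whenever $Y$ is compact, so that $\be Y$ collapses back to $Y$. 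The paper explicitly anticipates this route as a starred exercise immediately following the theorem.

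What each buys: the paper's approach is self-contained and does not depend on the nontrivial \autoref{ex:cplt-xtn}, at the cost of the somewhat ad hoc \autoref{thm:cauchy-smalldet} and a longer continuity verification. Your approach is more modular and isolates the conceptually interesting fact ``$Y$ compact $\Rightarrow$ $(Y,\cG_\be)$ complete,'' which is close to the later result that compactness is topological; it also makes continuity of $\gbar$ nearly automatic (an isometry composed with a topological equivalence). One small point worth making explicit in your write-up: when $Y$ is not separated the section $\be Y\to Y$ involves a choice of representative, but since any two representatives are $\approx$-equivalent and the Yoneda lemma makes the section an isometry into $(Y,\cG_\be)$, the resulting $\gbar$ is still continuous and well-defined up to $\approx$, which is all the theorem claims. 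Your uniqueness argument is essentially identical to the paper's.
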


Let us consider how to prove this theorem.  Given $\xi\in\be X$,
represented as above by the operation $\ph\mapsto \ph(\xi)$, we would
like to define a Cauchy point $\gbar(\xi)$ of $Y$ by
\begin{equation}
  (\gbar(\xi))_d(y) = \psi_{d,g,y}(\xi)\label{eq:gbardef}
  \quad\text{where}\quad
  \psi_{d,g,y}(x) = d(g(x),y).
\end{equation}
Note that $\psi_{d,g,y}$ is continuous since $g$ and $d$ are.
Unfortunately, it will not generally take values in $[0,1]$.
We can remedy this by using instead
\begin{equation}
  (\gbar(\xi)')_d(y) = \psi'_{d,g,y}(\xi)
  \quad\text{where}\quad
  \psi'_{d,g,y}(x) = \min(d(g(x),y),1).
\end{equation}
but now we cannot expect $\gbar(\xi)'$ to be a Cauchy point; at large
distances it will fail the triangle inequality.  The solution to this
conundrum is the following lemma.

\begin{lem}\label{thm:cauchy-smalldet}
  Suppose $X$ is a gauge space and we have a family of subsets $A_d
  \subseteq X$, one for each $X$-metric $d$, and also functions
  $\xi_d:A_d\to [0,\infty)$ such that
  \begin{enumerate}
  \item $d(x,y)+\xi_d(y) \ge \xi_d(x)$ for each $d$ and each $x,y\in A_d$.
  \item $\xi_d(x)+\xi_d(y) \ge d(x,y)$ for each $d$ and each $x,y\in A_d$.
  \item $\inf_{x\in A_d} \xi_d(x) = 0$ for each $d$.
  \item For any two gauge metrics $d_1$ and $d_2$, there exists a
    gauge metric $d_3$ such that $A_{d_3}\subseteq A_{d_1} \cap
    A_{d_2}$ and $d_3\ge \max(d_1,d_2)$ and $\xi_{d_3} \ge
    \max(\xi_{d_1},\xi_{d_2})$.
  \end{enumerate}
  Then there exists a unique Cauchy point \ze of $X$ such that
  $\ze_d(x) = \xi_d(x)$ for all gauge metrics $d$ and all $x\in A_d$.
\end{lem}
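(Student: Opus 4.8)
The plan is to extend each partial function $\xi_d$ from $A_d$ to all of $X$ by the infimum-convolution formula
\[ \ze_d(y) = \inf_{a\in A_d}\big(d(a,y)+\xi_d(a)\big), \]
which encodes ``the distance from the virtual point to $y$, routed through $A_d$'' and is directly analogous to the formula defining $\dhat$ on the completion. Since locatedness (condition 3) forces each $A_d$ to be nonempty and $\xi_d\ge 0$, this is a well-defined function $X\to[0,\infty)$. First I would check that $\ze_d$ agrees with $\xi_d$ on $A_d$: for $y\in A_d$ the choice $a=y$ gives $\ze_d(y)\le\xi_d(y)$, while condition (1) for $\xi$ (relabelling the two points) gives $d(a,y)+\xi_d(a)\ge\xi_d(y)$ for every $a\in A_d$, so the infimum equals $\xi_d(y)$ exactly.

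Next I would verify the four Cauchy-point axioms for the family $\{\ze_d\}$. Conditions (1) and (2) follow by pushing the infimum through the triangle inequality for $d$ together with conditions (1),(2) for $\xi$: for (1), $d(x,y)+d(a,y)+\xi_d(a)\ge d(x,a)+\xi_d(a)\ge\ze_d(x)$, and for (2), $(d(a,x)+\xi_d(a))+(d(b,y)+\xi_d(b))\ge d(a,x)+d(a,b)+d(b,y)\ge d(x,y)$ using $\xi_d(a)+\xi_d(b)\ge d(a,b)$. Condition (3) is immediate since $\ze_d$ restricts to $\xi_d$ on $A_d$ and $\inf_{A_d}\xi_d=0$. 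The step needing care is the filteredness condition (4): given $d_1,d_2$ I take the $d_3$ supplied by condition (4) for $\xi$, so that $A_{d_3}\subseteq A_{d_1}\cap A_{d_2}$, $d_3\ge\max(d_1,d_2)$, and $\xi_{d_3}\ge\max(\xi_{d_1},\xi_{d_2})$ on $A_{d_3}$; then for every $a\in A_{d_3}$ (which lies in $A_{d_1}$) I have $d_3(a,y)+\xi_{d_3}(a)\ge d_1(a,y)+\xi_{d_1}(a)\ge\ze_{d_1}(y)$, and taking the infimum over $a\in A_{d_3}$ yields $\ze_{d_3}\ge\ze_{d_1}$, and symmetrically $\ze_{d_3}\ge\ze_{d_2}$.

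For uniqueness, I would show that \emph{any} Cauchy point $\ze'$ of $X$ with $\ze'_d=\xi_d$ on $A_d$ is forced to equal the formula above, which simultaneously confirms the formula is correct. The upper bound is free: condition (1) for $\ze'$ gives $d(a,y)+\xi_d(a)\ge\ze'_d(y)$ for each $a\in A_d$, hence $\ze'_d(y)\le\inf_a(d(a,y)+\xi_d(a))$. For the reverse bound I would use locatedness: since $\ze'$ restricts to $\xi_d$ on $A_d$ with $\inf_{A_d}\xi_d=0$, for each $\ep>0$ there is $z\in A_d$ with $\xi_d(z)<\ep$, and condition (2) for $\ze'$ gives $\ze'_d(y)\ge d(z,y)-\xi_d(z)>\big(d(z,y)+\xi_d(z)\big)-2\ep\ge\inf_a(d(a,y)+\xi_d(a))-2\ep$; letting $\ep\to0$ matches the two bounds. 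The main obstacle I anticipate is keeping the bookkeeping in (4) straight — using the containment $A_{d_3}\subseteq A_{d_1}\cap A_{d_2}$ at exactly the point where $d_3\ge d_i$ and $\xi_{d_3}\ge\xi_{d_i}$ are invoked — together with the realization that locatedness is precisely what makes uniqueness work, since without it a Cauchy point would not be pinned down by its restrictions to the $A_d$.
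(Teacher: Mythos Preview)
Your argument is correct and follows essentially the same route as the paper: define $\ze_d(y)=\inf_{a\in A_d}\big(d(a,y)+\xi_d(a)\big)$, then prove uniqueness by sandwiching any extension between the triangle-inequality upper bound and a locatedness-based lower bound. The only difference is that the paper declares the verification of the Cauchy-point axioms for this formula to be ``easy to check'' and omits it, whereas you spell out axioms (1)--(4) explicitly; your treatment of (4) via the containment $A_{d_3}\subseteq A_{d_1}\cap A_{d_2}$ is exactly the intended bookkeeping.
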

\begin{proof}
  I claim that if \ze is as asserted, then we must have
  \begin{equation}
    \ze_d(x) = \inf_{a\in A_d} (d(x,a) + \xi_d(a))\label{eq:cauchy-smalldet}
  \end{equation}
  for all $d$.  It is easy to check that this definition gives a
  Cauchy point, so it suffices to show that any such \ze must be equal
  to this one.  The triangle inequality immediately gives
  \begin{equation}
    \ze_d(x) \le d(x,a) + \ze_d(a) = d(x,a) + \xi_d(a)
  \end{equation}
  so it remains to prove the direction $\ge$
  of~\eqref{eq:cauchy-smalldet}.  For this, let $\ep>0$ and let $a_0\in
  A_d$ be such that $\xi_d(a_0) < \ep$.  Then
  \begin{align}
    \inf_{a\in A_d} (d(x,a) + \xi_d(a))
    &\le d(x,a_0) + \xi_d(a_0)\\
    &\le \ze_d(x) + \ze_d(a_0) + \xi_d(a_0)\\
    &= \ze_d(x) + \xi_d(a_0) + \xi_d(a_0)\\
    &\le \ze_d(x) + 2\ep.
  \end{align}
  Taking $\ep\to 0$ we obtain the desired inequality.
\end{proof}

\begin{proof}[Proof of \autoref{thm:extension}]
  Given $\xi\in\be X$, for any $Y$-metric $d$ define
  \[ A_d = \setof{ y\in Y | \psi_{d,g,y}(\xi) < 1 }. \]
  where $\psi_{d,g,y}(x) = d(g(x),y)$.
  Now as suggested above, for $y\in A_d$ define
  \begin{equation}
    (\gbar(\xi)')_d(y) = \psi'_{d,g,y}(\xi)\label{eq:gbardefprime}
    \quad\text{where}\quad
    \psi'_{d,g,y}(x) = \min(d(g(x),y),1).
  \end{equation}
  We will show that $\gbar(\xi)'$ with these sets satisfies the hypotheses of
  \autoref{thm:cauchy-smalldet}.

  Let $y_1,y_2\in A_d$, and let $\ep>0$ be smaller than $1 -
  \psi_{d,g,y_i}(\xi)$ for $i=1,2$.  Now choose an $x$ with
  $|\psi_{d,g,y_i}(x)-\psi_{d,g,y_i}(\xi)|<\frac\ep2$ for $i=1,2$;
  then we also have $\psi_{d,g,y_i}(x) < 1$ and hence
  $\psi'_{d,g,y_i}(x) = \psi_{d,g,y_i}(x) = d(g(x),y_i)$.  Thus we can compute
  \begin{multline*}
    (\gbar(\xi)')_d(y_1)  + d(y_1,y_2)
    = \psi'_{d,g,y_1}(\xi) + d(y_1,y_2)\\
    = \psi_{d,g,y_1}(\xi) + d(y_1,y_2)
    \ge \psi_{d,g,y_1}(x) - \frc\ep2 + d(y_1,y_2)
    = d(g(x),y_1) + d(y_1,y_2) - \frc\ep2\\
    \ge d(g(x),y_2) - \frc\ep2
    = \psi_{d,g,y_2}(x) - \frc\ep2
    \ge \psi_{d,g,y_2}(\xi) - \ep\\
    = \psi'_{d,g,y_2}(\xi) - \ep
    = (\gbar(\xi))_d(y_2) - \ep.
  \end{multline*}
  Since this is true for all $\ep>0$, we obtain
  \[(\gbar(\xi)')_d(y_1) + d(y_1,y_2) \ge (\gbar(\xi)')_d(y_2).\]
  Similarly, with the same \ep\ and $x$, we have
  \begin{multline}
    (\gbar(\xi)')_d(y_1) + (\gbar(\xi)')_d(y_2)
    = \psi'_{d,g,y_1}(\xi) + \psi'_{d,g,y_2}(\xi)
    = \psi_{d,g,y_1}(\xi) + \psi_{d,g,y_2}(\xi)\\
    \ge \psi_{d,g,y_1}(x) + \psi_{d,g,y_2}(x) - \ep
    = d(g(x),y_1) + d(g(x),y_2) - \ep
    \ge d(y_1,y_2) - \ep.
  \end{multline}
  Again, taking $\ep\to 0$ we obtain
  \[(\gbar(\xi)')_d(y_1) + (\gbar(\xi)')_d(y_2) \ge d(y_1,y_2).
  \]
  Thus $\gbar(\xi)$ satisfies the first two conditions of
  \autoref{thm:cauchy-smalldet}.  For the third, we want to show that
  given any $Y$-metric $d$ and any $\ep>0$, there exists a $y\in A_d$
  with $(\gbar(\xi)')_{d}(y)<\ep$.  We may assume $\ep<1$.  Now since
  $Y$ is totally bounded, we can cover it by finitely many balls
  $B_d(z_j,\frc\ep2)$.  Since $\xi$ is a Cauchy point of
  $(X,\cG_\be)$, we can choose an $x\in X$ such that for all $j$ we
  have $|\psi'_{d,g,z_j}(x) - \psi'_{d,g,z_j}(\xi)|<\frc\ep2$.
  Now since our balls covered $Y$, we must have $g(x) \in
  B_d(z_j,\frc\ep2)$ for some $j$; let $y$ be that $z_j$.  Then since
  $\psi_{d,g,y}(x) = d(g(x),y) < \frc\ep2$, we have $\psi'_{d,g,y}(x)
  = \psi_{d,g,y}(x) = d(g(x),y)$.  Thus:
  \begin{equation}
    (\gbar(\xi)')_{d}(y)
    = \psi'_{d,g,y}(\xi)
    \le \psi'_{d,g,y}(x) + \frc\ep2
    = d(g(x),y) + \frc\ep2
    \le \frc\ep2 + \frc\ep2 = \ep,
  \end{equation}
  as desired.

  Finally, suppose that $d\le d'$ are $Y$-metrics.  Then clearly
  $\psi'_{d,g,y} \le \psi'_{d',g,y}$, so by \autoref{thm:becauchy-le} we
  have $A_{d'} \subseteq A_d$ and $\gbar(\xi)'_d \le \gbar(\xi)'_{d'}$.

  Now \autoref{thm:cauchy-smalldet} gives us a (unique) Cauchy point of
  $Y$, which we may denote $\gbar(\xi)$, with the property that
  $\gbar(\xi)_d(y) = \psi_{d,g,y}(\xi)$ whenever the latter is $<1$.
  Since $Y$ is complete, this Cauchy point must be represented by some
  actual point; choose such a point and denote it also by
  $\gbar(\xi)$.  Thus for any $y\in Y$ and $Y$-metric $d$ we have
  \[ d(\gbar(\xi),y) = \psi_{d,g,y}(\xi) \]
  if the latter is $<1$.  In particular, for any $x\in X$, since
  $\psi_{d,g,g(x)}(\xchk) = d(g(x),g(x)) = 0 < 1$, we have
  \[ d(\gbar(\xchk),g(x)) = \psi_{d,g,y}(\xchk) = 0
  \]
  for any $d$, and therefore $\gbar(\xchk) \approx g(x)$ as desired.

  We now show that $\gbar$ is continuous.  Let $\xi\in\Xhat$, let $d$
  be a $Y$-metric, and let $\ep>0$.  Choose a $y\in Y$ which satisfies
  $y\in A_d$ (i.e. $\psi_{d,g,y}(\xi)<1$) and also
  $(\gbar(\xi)')_d(y)<\ep$.  Such a $y$ exists by the third condition
  of \autoref{thm:cauchy-smalldet}, which we proved above for
  $\gbar(\xi)'$.  Then since $y\in A_d$, we have $d(\gbar(\xi),y) =
  \psi_{d,g,y}(\xi) <\ep$.

  In particular, for any other $\ze\in\Xhat$, by
  \autoref{thm:becauchy-le}\ref{item:becle1} and the triangle
  inequality for $d$,
  \begin{align}
    \psi_{d,g,\gbar(\ze)}(\xi)
    &\le d(\gbar(\ze),y)+ \psi_{d,g,y}(\xi) \\
    &< d(\gbar(\ze),y) - d(y,\gbar(\xi)) + 2\ep\\
    &\le d(\gbar(\ze),\gbar(\xi)) + 2\ep.
  \end{align}
  Since this holds for any $\ep>0$, we have
  $\psi_{d,g,\gbar(\ze)}(\xi) \le d(\gbar(\ze),\gbar(\xi))$.  Since we
  similarly have
  \begin{align}
    \psi_{d,g,\gbar(\ze)}(\xi)
    &\ge d(\gbar(\ze),y) - \psi_{d,g,y}(\xi) \\
    &> d(\gbar(\ze),y) + d(y,\gbar(\xi)) - 2\ep\\
    &\ge d(\gbar(\ze),\gbar(\xi)) - 2\ep.
  \end{align}
  for all $\ep>0$, we must in fact have $\psi_{d,g,\gbar(\ze)}(\xi) =
  d(\gbar(\ze),\gbar(\xi))$.

  Let $d_X$ be the metric on $X$ in $\cG_\be$ induced by the function
  $\psi_{d,g,\gbar(\xi)}$, and let $\de=\ep$.  Then if
  $d_X(\xi,\ze)<\de$, we have
  \[ \Big|
  \psi_{d,g,\gbar(\xi)}(\xi) - \psi_{d,g,\gbar(\xi)}(\ze) \Big|
  \;=\;
  \Big| d(\gbar(\xi),\gbar(\xi)) - d(\gbar(\ze),\gbar(\xi)) \Big|
  \; =\;
  d(\gbar(\ze),\gbar(\xi)) \;<\;\ep
  \]
  which is exactly what we need for continuity of $\gbar$.
  
  Finally, if $h:\Xhat\to Y$ is continuous and satisfies $h(\xchk)
  \approx g(x)$ for all $x$, then we have $h(\xchk) \approx
  \gbar(\xchk)$ for all $x$.  Fix $\xi\in\Xhat$, a $Y$-metric $d_Y$,
  and $\ep>0$; we want to show $d_Y(h(\xi),\gbar(\xi))<\ep$.  Since
  $h$ is continuous, there exists an $\Xhat$-metric $d_1$ and
  $\de_1>0$ such that (in particular) if $d_1(\xi,\xchk)<\de_1$, then
  $d_Y(h(\xi),h(\xchk))<\frac\ep2$.  Similarly, since $\gbar$ is continuous,
  there exists an $\Xhat$-metric $d_2$ and $\de_2>0$ such that if
  $d_2(\xi,\xchk)<\de_2$, then $d_Y(\gbar(\xi),\gbar(\xchk))<\frac\ep2$.

  Pick $d_3 \ge \max(d_1,d_2)$ and $\de_3 = \min(\de_1,\de_2)$.  By
  locatedness of $\xi$, there exists $x\in X$ such that
  $d_3(\xi,\xchk)<\de_3$, hence $d_Y(h(\xi),h(\xchk))<\frac\ep2$ and
  $d_Y(\gbar(\xi),\gbar(\xchk))<\frac\ep2$.
  Since $d_Y(\gbar(\xchk),h(\xchk))=0$, the triangle inequality
  implies $d_Y(h(\xi),\gbar(\xi))<\ep$ as desired.
\end{proof}

If $Y$ is itself a (separated) compactification of $X$, this says that
we have a unique map from $\be X$ to $Y$ fixing the image of $X$.
That is, every point at infinity in $\be X$ becomes a point in $Y$,
but one point in $Y$ may come from multiple points in $\be X$.  This
is the sense in which $\be X$ is the \emph{best} compactification of
$X$.  (The technical term is \emph{initial}.  One also says that $\be$
is the \emph{left adjoint} to the inclusion of compact separated gauge
spaces in all gauge spaces.)


\section{Exercises on the Stone-\v Cech compactification}
\label{sec:ex-stonecech}

\begin{ex}
  Prove that if $X$ is already compact and separated, then $\be X$ is
  topologically isomorphic to $X$.
\end{ex}

\begin{ex}\label{ex:naturality}
  For any map $g:X\to Y$, the composite $X\xto{g} Y \to \be Y$ induces
  a map $\be g: \be X \to \be Y$.  Prove that the composites $X\to \be
  X \to \be Y$ and $X\to Y\to \be Y$ are equal.  (This is called
  \emph{naturality}.)
\end{ex}

\begin{exstar}
  Use \autoref{ex:cplt-xtn} to give an alternative proof of
  \autoref{thm:extension}.
\end{exstar}

\begin{ex}\label{ex:e-unif}
  Let $X$ be a gauge space, and define a new gauge $\cG_e$ similarly to
  $\cG_\be$, but using continuous functions $X\to\lR^n$ instead of
  $X\to [0,1]^n$.  Prove that:
  \begin{enumerate}
  \item $\cG_e$ is topologically equivalent to the original gauge.
  \item If $\cG_e$ is totally bounded, then $X$ is pseudocompact
    (see \autoref{ex:pscpt}).
  \end{enumerate}
\end{ex}

\begin{ex}
  Suppose that $X$ has a \emph{discrete} metric.  Prove that the
  points of $\be X$ can be identified with \textbf{ultrafilters} on
  $X$; that is, subsets $\cF\subseteq \cP X$ such that
  \begin{itemize}
  \item $X\in\cF$ and $\emptyset\not\in\cF$,
  \item If $A,B\in\cF$ then $A\cap B\in\cF$,
  \item If $A\in\cF$ and $A\subseteq B$, then $B\in \cF$, and
  \item For all $A\subseteq X$, either $A\in\cF$ or $\neg A\in\cF$.
  \end{itemize}
\end{ex}


\begin{exstar}
  Prove that no point of $\be \lN\setminus \lN$ (where \lN\ has the
  discrete metric) is the limit of a sequence of points of \lN.
  Conclude that $\be\lN$ is not metrizable (that is, not
  topologically equivalent to a space with a single metric).
\end{exstar}

\begin{exstar}
  Let $X$ be a gauge space, $\xi$ a point of $\be X \setminus X$, and
  let $Y = \be X \setminus \{\xi\}$.  Prove that $\be Y$ is
  topologically isomorphic to $\be X$.
\end{exstar}

\begin{exstar}
  Let \Om\ be an uncountable well-ordered set (that is, it has a total
  order $<$ and any subset of it has a least element) such that for
  any $a\in\Om$, the set $\{b\in\Om\mid b<a\}$ is countable.  For any
  $a\in\Om$ define
  \[d_a(x,y) =
  \begin{cases}
    0 & x=y \text{ or } (x>a \text{ and } y>a)\\
    1 & \text{otherwise.}
  \end{cases}\]
  \begin{enumerate}
  \item Verify that the set of all these metrics, $\{d_a \mid a\in
    \Om\}$, is a gauge on \Om.
  \item Prove that any continuous function $f\maps \Om\to [0,1]$ is
    eventually constant, i.e.\ there is an $a\in\Om$ such that
    $f(x)=f(a)$ whenever $x\ge a$.
  \item Prove that $\be\Om$ contains exactly one point not in \Om.
  \end{enumerate}
\end{exstar}

\begin{exstar}
  Suppose $X$ is equipped with a topology as in
  \autoref{ex:top-space}.  Classically, $X$ is said to be
  \textbf{completely regular} if for any point $x\in X$ and open
  subset $U\subseteq X$ such that $x\in U$, there exists a continuous
  function $\ph:X\to [0,1]$ (in the sense of \autoref{ex:top-cts})
  such that $\ph(x) = 1$ and $\ph(X\setminus U) = \{0\}$.  Prove that
  any such topology is induced by some gauge on $X$.
\end{exstar}

\begin{exstar}
  Prove that every gauge space is proximally equivalent to a totally
  bounded one.  (This is a sort of converse to \autoref{ex:tbddproxunif}.)
\end{exstar}

\begin{exstar}
  A \textbf{proximity} on a set $X$ is a relation $\approx$ between
  subsets of $X$ such that
  \begin{itemize}
  \item $A\not\approx \emptyset$ for all $A$ \emph{(nontriviality)}
  \item $A\cap B \neq \emptyset$ implies $A\approx B$ \emph{(reflexivity)}
  \item $A\approx B\iff B\approx A$ \emph{(symmetry)}
  \item If $A\not\approx B$ then there is a $C$ with $B\not\approx C$
    and $A\not\approx (X\setminus C)$ \emph{(transitivity)}
  \item $A\approx (B\cup C)\iff A\approx B$ or $A\approx C$ \emph{(filteredness)}
  \end{itemize}
  Prove that:
  \begin{enumerate}
  \item The relation $\approx$ induced by a gauge is a proximity.
  \item Every proximity is induced by a gauge.
  \end{enumerate}
\end{exstar}

\begin{exstar}
  (This exercise continues \autoref{ex:tbdd-cauchy}.)  Suppose $X$ is
  a totally bounded gauge space and $\sF$ is a nonempty collection of
  subsets of $X$ such that
  \begin{itemize}
  \item $\emptyset\notin\sF$ \emph{(nontriviality)}.
  \item $(A\cup B)\in\sF$ $\iff$ $A\in\sF$ or $B\in\sF$ \emph{(filteredness)}.
  \item If $A\notin\sF$, then there is a $B\in\sF$ such that
    $A\not\approx B$ \emph{(transitivity)}.
  \item If $A\in \sF$ and $B\in \sF$, then $A\approx B$ \emph{(locatedness)}.
  \end{itemize}
  Prove that there is a unique Cauchy point $\xi$ of $X$ such that
  $\sF = \setof{A\subseteq X | \xi \approx A}$.  Thus, completion of
  totally bounded gauge spaces (hence, compactifications) can
  equivalently be expressed using proximities.
\end{exstar}

\begin{exstar}
  A \textbf{uniformity} on a set $X$ is a nonempty collection of
  subsets of $X\times X$, called \emph{entourages}, such that
  \begin{itemize}
  \item If $U$ is an entourage and $U\subseteq V$, then $V$ is an
    entourage \emph{(saturation)}.
  \item If $U$ and $V$ are entourages, then so is $U\cap V$
    \emph{(filteredness)}.
  \item If $U$ is an entourage, then $(x,x)\in U$ for all $x\in X$
    \emph{(reflexivity)}.
  \item If $U$ is an entourage, then so is $U^{-1} = \setof{(y,x) |
      (x,y) \in U }$ \emph{(symmetry)}.
  \item If $U$ is an entourage, then there exists an entourage $V$
    such that
    \begin{equation}
      V\circ V = \setof{(x,z) | (x,y)\in V \;\text{and}\; (y,z)\in V}
    \end{equation}
    is a subset of $U$ \emph{(transitivity)}.
  \end{itemize}
  A function $f:X\to Y$ between sets equipped with uniformities
  (called \emph{uniform spaces}) is said to be \textbf{uniformly
    continuous} if whenever $U$ is an entourage of $Y$, then $(f\times
  f)^{-1}(U)$ is an entourage of $X$.  Prove that:
  \begin{enumerate}
  \item Every gauge induces a uniformity.
  \item A function $f:X\to Y$ between gauge spaces is uniformly
    continuous in our sense if and only if it is uniformly continuous
    in the entourage sense.
  \item Every uniformity is induced by a gauge.\label{item:unif}
  \item Two gauges on a set induce the same uniformity if and only if
    they are uniformly equivalent.
  \end{enumerate}
\end{exstar}

\section{Constructing points in $\be X$}
\label{sec:points-in-betaX}

One thing which is still missing is a proof that every point at
infinity in some other compactification $Y$ must come from \emph{at
  least one} point at infinity in $\be X$.  In other words, we want to
be able to get any other compactification by squashing together some
points in $\be X$.

The reason this is hard to prove is that even with
\autoref{thm:cauchy-beta}, the points of $\be X$ are still pretty
amorphous beasts.  However, there is a fairly easy way to produce them
(even if the result is not all that explicit).  The idea is to start
with an approximation and gradually refine it.

\begin{defn}
  An \textbf{evaluation datum} on a gauge space $X$ is an operation
  $J$ assigning to each continuous $\ph:X\to [0,1]$ a nonempty
  \emph{closed interval} $J(\ph) \subseteq [0,1]$, such that
  \begin{quote}
    For any finite family $(\ph_i)_{1\le i\le n}$ and every $\ep>0$,
    there exists a point $x\in X$ such that
    $d(\ph_i(x),J(\ph_i))<\ep$ for all $i$.
  \end{quote}
\end{defn}

By \autoref{thm:cauchy-beta}, a Cauchy point of $(X,\cG_\be)$ is
precisely an evaluation datum such that each interval $J(\ph)$ has
length zero (i.e.\ is of the form $[\ph(\xi),\ph(\xi)]$).  The next lemma
says that we can ``improve'' any evaluation datum to become more like
a Cauchy point.

\begin{lem}\label{thm:eval-improve}
  Let $J$ be an evaluation datum, $\ph:X\to [0,1]$ continuous, and
  suppose $J(\ph) = I_1 \cup I_2$ is the union of two closed
  subintervals.  Define $J_1$ to be $J$ except that $J_1(\ph) = I_1$, and
  similarly $J_2$ to be $J$ except that $J_2(\ph)= I_2$.  Then either
  $J_1$ or $J_2$ is an evaluation datum.
\end{lem}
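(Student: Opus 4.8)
The plan is to argue by contradiction, combining the two \emph{possible} failures into a single failure of the original datum $J$. The only condition that can go wrong for $J_1$ or $J_2$ is the approximation property, since each of $I_1$ and $I_2$ is still a nonempty closed subinterval of $[0,1]$ and the two data agree with $J$ everywhere except at $\ph$; so I would focus entirely on that property.

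First I would suppose that \emph{neither} $J_1$ nor $J_2$ is an evaluation datum. Unpacking the negation of the defining condition, this produces a finite family $\mathcal F_1$ of continuous functions $X\to[0,1]$ together with a tolerance $\ep_1>0$ witnessing the failure of $J_1$ (that is, no point $x$ satisfies $d(\eta(x),J_1(\eta))<\ep_1$ for all $\eta\in\mathcal F_1$), and similarly a family $\mathcal F_2$ with a tolerance $\ep_2>0$ witnessing the failure of $J_2$.

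Next I would form the combined finite family $\mathcal F=\mathcal F_1\cup\mathcal F_2\cup\{\ph\}$ and apply the evaluation-datum property of $J$ itself, with $\ep=\min(\ep_1,\ep_2)>0$, to obtain a point $x\in X$ with $d(\eta(x),J(\eta))<\ep$ for every $\eta\in\mathcal F$. The crucial observation is that for $\ph$ we have $J(\ph)=I_1\cup I_2$, and the distance from a point to a union is the minimum of the distances, so $\min\big(d(\ph(x),I_1),\,d(\ph(x),I_2)\big)<\ep$; hence at least one of the cases $d(\ph(x),I_1)<\ep$ or $d(\ph(x),I_2)<\ep$ must hold.

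Finally, in the first case I would verify that this same $x$ already satisfies $d(\eta(x),J_1(\eta))<\ep_1$ for all $\eta\in\mathcal F_1$: when $\eta\neq\ph$ we have $J_1(\eta)=J(\eta)$ and $d(\eta(x),J(\eta))<\ep\le\ep_1$ because $\eta\in\mathcal F$, while for $\eta=\ph$ we have $J_1(\ph)=I_1$ and $d(\ph(x),I_1)<\ep\le\ep_1$. This contradicts the choice of $\mathcal F_1$ and $\ep_1$, and the second case contradicts the choice of $\mathcal F_2$ and $\ep_2$ by the symmetric argument, so one of $J_1,J_2$ must in fact be an evaluation datum. The only point needing care — and the reason the two families must be handled together rather than separately — is that the dichotomy above is resolved only \emph{after} the single point $x$ has been produced, so one cannot decide in advance which of $J_1,J_2$ will survive; taking the common tolerance $\min(\ep_1,\ep_2)$ is exactly what lets the one point $x$ defeat whichever failure turns out to be relevant.
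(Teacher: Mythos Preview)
Your argument is correct and is essentially the paper's own proof: assume both $J_1$ and $J_2$ fail, combine the witnessing finite families with $\ep=\min(\ep_1,\ep_2)$, apply the approximation property of $J$, and use that a point within $\ep$ of $I_1\cup I_2$ must be within $\ep$ of one of $I_1,I_2$ to derive a contradiction. The only cosmetic difference is that the paper observes $\ph$ must already appear in each counterexample family (since $J_k$ agrees with $J$ away from $\ph$), whereas you simply adjoin $\ph$ to the combined family; both handle the issue equally well.
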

\begin{proof}
  Suppose that neither is.  Then we have an $\ep_1>0$ and a finite
  family $(\psi_i)_{1\le i\le n}$ forming a counterexample for $J_1$,
  and similarly $\ep_2>0$ and $(\chi_j)_{1\le j\le m}$ forming a
  counterexample for $J_2$.  Since $J$ is an evaluation datum, we must
  have $\psi_i = \ph$ for some $i$ and $\chi_j = \ph$ for some $j$.
  But now $\ep = \min(\ep_1,\ep_2)$ and the union of the $\psi$'s and
  $\chi$'s is a counterexample for $J$, since a point that is within
  $\ep$ of $J(\ph)$ must either be within $\ep$ of $I_1$ or $I_2$.
\end{proof}

Using this, we can produce a Cauchy point refining any evaluation datum.

\begin{thm}\label{thm:making-points}
  For any evaluation datum $J$, there exists a point $\xi\in \be X$ such
  that $\ph(\xi)\in J(\ph)$ for all $\ph$.
\end{thm}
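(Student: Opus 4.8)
The plan is to find, among all evaluation data that \emph{refine} $J$, a \emph{minimal} one, and then argue that a minimal evaluation datum must assign a degenerate (length-zero) interval to every $\ph$, so that it is a Cauchy point by \autoref{thm:cauchy-beta}.

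Concretely, I would consider the collection $P$ of all evaluation data $J'$ with $J'(\ph)\subseteq J(\ph)$ for every continuous $\ph\maps X\to[0,1]$, partially ordered by refinement: $J'\preceq J''$ iff $J'(\ph)\subseteq J''(\ph)$ for all $\ph$. Since $J\in P$, this poset is nonempty. I would then apply Zorn's lemma, in its dual minimal-element form, to produce a minimal element $J^*$ of $P$; for this the work is to show that every chain in $P$ has a lower bound in $P$.

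Given a chain $\{J_\al\}$, the natural candidate lower bound is $J_\infty(\ph)=\bigcap_\al J_\al(\ph)$. For each fixed $\ph$ the intervals $J_\al(\ph)$ are totally ordered by inclusion (because the chain is), so they form a nested family of nonempty closed subintervals of the compact space $[0,1]$; hence $J_\infty(\ph)$ is again a nonempty closed interval, contained in every $J_\al(\ph)$ and in $J(\ph)$. The one genuinely nontrivial point, which I expect to be the main obstacle, is checking that $J_\infty$ still satisfies the approximation condition defining an evaluation datum. Here is how I would handle it. Fix a finite family $(\ph_i)_{1\le i\le n}$ and $\ep>0$, and write $J_\al(\ph_i)=[a^i_\al,b^i_\al]$; along the chain $a^i_\al\uparrow$ and $b^i_\al\downarrow$ to the endpoints of $J_\infty(\ph_i)$, so for each $i$ there is an index $\al_i$ with $J_{\al_i}(\ph_i)$ lying inside the $\frc\ep2$-neighborhood of $J_\infty(\ph_i)$. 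Using that the chain is \emph{totally} ordered, I take the most refined of the finitely many $J_{\al_i}$, call it $J_{\al^*}$; then $J_{\al^*}(\ph_i)$ sits inside the $\frc\ep2$-neighborhood of $J_\infty(\ph_i)$ \emph{simultaneously} for all $i$. Applying the evaluation-datum property of $J_{\al^*}$ gives a single $x\in X$ with $d(\ph_i(x),J_{\al^*}(\ph_i))<\frc\ep2$ for every $i$, and the triangle inequality then yields $d(\ph_i(x),J_\infty(\ph_i))<\ep$. Thus $J_\infty\in P$ is a lower bound, and Zorn applies.

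It remains to see that the minimal $J^*$ assigns only degenerate intervals. If some $J^*(\ph)=[a,b]$ had $a<b$, I would bisect it as $[a,\frac{a+b}2]\cup[\frac{a+b}2,b]$ and invoke \autoref{thm:eval-improve}: one of the two halves again gives an evaluation datum, which strictly refines $J^*$, contradicting minimality. Hence every $J^*(\ph)$ is a single point $\ph(\xi)$, so by \autoref{thm:cauchy-beta} $J^*$ is a Cauchy point $\xi\in\be X$; and since $J^*(\ph)\subseteq J(\ph)$, we conclude $\ph(\xi)\in J(\ph)$ for all $\ph$, as required.
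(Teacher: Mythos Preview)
Your argument is correct and essentially identical to the paper's: both apply Zorn's lemma to the poset of evaluation data refining $J$, verify that the intersection along a chain is again an evaluation datum by the same $\frc\ep2$-approximation trick, and then use \autoref{thm:eval-improve} (via bisection) to see that an extremal element must have all intervals degenerate. The only cosmetic difference is the ordering convention (you seek a minimal element under $\subseteq$, the paper a maximal element under the reverse order), and you are slightly more explicit than the paper about why $J_\infty(\ph)$ is nonempty.
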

\begin{proof}
  Consider the partially ordered set of evaluation data, where $J_1
  \le J_2$ is defined to mean $J_2(\ph) \subseteq J_1(\ph)$ for all
  $\ph$.  \autoref{thm:eval-improve} implies that if $J$ is an
  evaluation datum that is not a $\beta$-Cauchy point, then there is
  an evaluation datum $J'$ with $J<J'$.  Thus, a maximal element of
  this poset must be a $\beta$-Cauchy point.

  We will apply Zorn's Lemma to show that a maximal element exists,
  and indeed a maximal element exists above any element; this will
  prove the theorem.  Thus, we must show that chains of evaluation
  data have upper bounds.  Let $(J_a)_{a\in A}$ be a chain, i.e.\ a
  set of evaluation data such that $\le$ restricted to it is a total
  order, and define
  \[ J(\ph) = \bigcap_{a\in A} J_a(\ph).
  \]
  Evidently the lower bound of $J(\ph)$ is the supremum of the lower
  bounds of the $J_a(\ph)$ and likewise its upper bound is the infimum
  of their upper bounds.

  Clearly $J\ge J_a$ for all $a$, so it remains to show that $J$ is an
  evaluation datum.  Let $(\ph_i)_{1\le i\le n}$ and $\ep>0$ be given.
  Since $(J_a)_a$ is a chain (or more precisely, filtered), for each
  $i$ there exists some $a_i$ such that the length of $J_{a_i}(\ph)$
  is no more than $\frc\ep2$ greater than the length of $J_a(\ph)$.
  Again, since $(J_a)_a$ is a chain (or filtered) and there are only
  finitely many $i$'s, there exists some $b\in A$ which is above all of
  these $a_i$'s.  But since $J_b$ is an evaluation datum, we have an
  $x\in X$ such that $d(\ph_i(x), J_b(\ph)) < \frc\ep2$ for all $i$,
  and hence $d(\ph_i(x), J(\ph)) < \ep$.  Thus $J$ is an evaluation
  datum, completing the proof.
\end{proof}

Now we can prove the missing theorem.

\begin{thm}\label{thm:ext-surj}
  Suppose $Y$ is compact and separated, and $g:X\to Y$ is a continuous
  map such that for all $y\in Y$ we have $g(X) \approx y$.  Then the
  unique extension $\gbar:\be X \to Y$ is surjective.
\end{thm}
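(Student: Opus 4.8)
The plan is to fix an arbitrary $y\in Y$ and manufacture a point $\xi\in\be X$ with $\gbar(\xi)=y$, using \autoref{thm:making-points} as the engine. I will build an evaluation datum $J$ that records, for each continuous $\ph$, the possible cluster values of $\ph(x)$ as its argument $x$ is dragged toward $y$ through $g$; the Cauchy point $\xi$ that \autoref{thm:making-points} extracts by refining $J$ will then satisfy $\gbar(\xi)=y$. The hypothesis $g(X)\approx y$ is precisely what makes the constraint ``$g(x)$ near $y$'' nonvacuous, so that is where it enters.

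Concretely, for a continuous $\ph\maps X\to[0,1]$, each $Y$-metric $d$, and each $\de>0$, I set $S_{d,\de}(\ph)=\setof{\ph(x) | d(g(x),y)<\de}$, a nonempty subset of $[0,1]$ (nonempty exactly because $g(X)\approx y$). As $d$ grows and $\de$ shrinks, $S_{d,\de}(\ph)$ shrinks, so its infimum rises and its supremum falls; filteredness of the gauge on $Y$ makes the pairs $(d,\de)$ into a directed set, so I may define $a_\ph=\sup_{d,\de}\inf S_{d,\de}(\ph)$ and $b_\ph=\inf_{d,\de}\sup S_{d,\de}(\ph)$. Refining any two pairs to a common one (whose $S$ is nonempty) gives $a_\ph\le b_\ph$, so $J(\ph)=[a_\ph,b_\ph]$ is a genuine nonempty closed interval. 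To check $J$ is an evaluation datum, given a finite family $(\ph_i)$ and $\ep>0$ I use filteredness to pick a single $d$ and $\de$ witnessing $\inf S_{d,\de}(\ph_i)>a_{\ph_i}-\ep$ and $\sup S_{d,\de}(\ph_i)<b_{\ph_i}+\ep$ for every $i$ simultaneously; then any $x$ with $d(g(x),y)<\de$ (one exists by $g(X)\approx y$) has each $\ph_i(x)$ within $\ep$ of $J(\ph_i)$.

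Now \autoref{thm:making-points} yields $\xi\in\be X$ with $\ph(\xi)\in J(\ph)$ for all $\ph$, and it remains to show $d(\gbar(\xi),y)=0$ for every $Y$-metric $d$. The key computation is that, for the truncated distance function $\psi'_{d,g,y}(x)=\min(d(g(x),y),1)$ used in the proof of \autoref{thm:extension}, one has $J(\psi'_{d,g,y})=\{0\}$: letting $\de\to 0$ forces $\sup S_{d,\de}(\psi'_{d,g,y})\to 0$, so $b_{\psi'_{d,g,y}}=0$, while $a\ge 0$ automatically, giving $\psi'_{d,g,y}(\xi)=0$. Since this value is $<1$, the point $y$ lies in the set $A_d$ from the proof of \autoref{thm:extension}, and that proof's distance identity gives $d(\gbar(\xi),y)=\psi'_{d,g,y}(\xi)=0$. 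As this holds for every $d$, we obtain $\gbar(\xi)\approx y$, and separatedness of $Y$ upgrades this to $\gbar(\xi)=y$. Since $y$ was arbitrary, $\gbar$ is surjective.

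The main obstacle I expect is verifying that $J$ really is an evaluation datum: I must produce a single point $x$ that simultaneously places every $\ph_i(x)$ near its interval $J(\ph_i)$, which forces the supremum/infimum estimates to be done uniformly over the finite family and leans on filteredness twice (once to define the directed limits $a_\ph,b_\ph$, once to merge the finitely many constraints). A secondary piece of bookkeeping is reconciling my $[0,1]$-valued evaluation $\psi'_{d,g,y}(\xi)$ with the possibly untruncated quantity $\psi_{d,g,y}(\xi)$ appearing in the definition of $A_d$ and in the extension theorem's formula; this is harmless, since the two agree whenever the value is below $1$, which is exactly the regime the computation lands in.
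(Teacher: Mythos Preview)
Your proof is correct and follows essentially the same route as the paper's: both build the evaluation datum $J(\ph)=[\,\sup_{d,\de}\inf\ph(C_{d,\de}),\,\inf_{d,\de}\sup\ph(C_{d,\de})\,]$ from the ``neighborhood filter'' $C_{d,\de}=\{x:d(g(x),y)<\de\}$, verify it via filteredness, and then observe that $J$ collapses to $\{0\}$ on the distance-to-$y$ function so that the refining $\xi$ satisfies $\gbar(\xi)=y$. The only cosmetic difference is that you work with the truncated function $\psi'_{d,g,y}$ rather than the untruncated $\psi_{d,g,y}$, which is arguably a bit more careful since evaluation data are only defined on $[0,1]$-valued maps.
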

\begin{proof}
  Given $y\in Y$, define an evaluation datum $J$ on $X$ as follows.
  For each $Y$-metric $d$ and $\ep>0$, let $C_{d,\ep} = \setof{x\in X
    | d(g(x),y) < \ep }$.  Since $g(X)\approx y$ by assumption,
  $C_{d,\ep}$ is always nonempty.  For $\ph:X\to [0,1]$ continuous, let
  \begin{align}
    A(\ph) &= \sup_{d,\ep} \; \inf_{x\in C_{d,\ep}} \ph(x)\\
    B(\ph) &= \inf_{d,\ep} \; \sup_{x\in C_{d,\ep}} \ph(x).
  \end{align}
  and set $J(\ph) = [A(\ph),B(\ph)]$.  To show that $J$ is an evaluation
  datum, let $(\ph_i)_{1\le i\le n}$ and $\ep'>0$ be given.  Since
  $Y$-metrics are filtered and positive real numbers admit minima, we
  can then choose a single $Y$-metric $d$ and an $\ep>0$ such that for
  all $i$, we have
  \begin{equation}
    \left|A({\ph_i}) - \inf_{x\in C_{d,\ep}} \ph_i(x) \right| < \ep'\qquad\text{and}\qquad
    \left|B({\ph_i}) - \sup_{x\in C_{d,\ep}} \ph_i(x) \right| < \ep'
  \end{equation}
  Thus, for \emph{any} $x\in C_{d,\ep}$, since $\ph_i(x)$ lies between
  $\inf_{x\in C_{d,\ep}} \ph_i(x)$ and $\sup_{x\in C_{d,\ep}}
  \ph_i(x)$ for all $i$, it must also lie within $\ep'$ of $J(\ph_i)$.

  As in the proof of \autoref{thm:extension}, for any $Y$-metric $d$
  we define $\psi_{d,g,y}(x) = d(g(x),y)$.  Note that then for any
  $\ep>0$, we have
  \begin{equation}
    \sup_{x\in C_{d,\ep}} \psi_{d,g,y}(x) \le \ep.
  \end{equation}
  Therefore $B({\psi_{d,g,y}}) = 0$, and so $J(\psi_{d,g,y}) = [0,0]$.

  Now we apply \autoref{thm:making-points} to obtain a point
  $\xi\in\be X$ such that $\ph(\xi) \in J(\ph)$ for all $\ph$.  In
  particular, for any $Y$-metric $d$ we have $\psi_{d,g,y}(\xi) = 0$.
  Since $0<1$, by the proof of \autoref{thm:extension} we also have
  \[ d(\gbar(\xi),y) = \psi_{d,g,y}(\xi) = 0. \] Since this is true
  for any $d$, and $Y$ is separated, $\gbar(\xi) = y$.
\end{proof}

Thus, for instance, $\be (\lR^2)$ contains points at infinity which map
onto all the points at infinity in the projective plane.  In fact,
each point at infinity in the projective plane is the image of
\emph{many} points at infinity in $\be(\lR^2)$.

\begin{eg}
  For any fixed $y_0\in\lR$, define an evaluation datum $J_y$ on $\lR^2$ by setting
  \begin{align}
    A_{y_0}(\ph) &= \sup_{z\in\lR}\; \inf_{x>z}\; \ph(x,y_0)\\
    B_{y_0}(\ph) &= \inf_{z\in\lR}\; \sup_{x>z}\; \ph(x,y_0)\\
    J_{y_0}(\ph) &= [A_{y_0}(\ph),B_{y_0}(\ph)].
  \end{align}
  Since finite sets of real numbers have maxima, the same argument as
  in the proof of \autoref{thm:ext-surj} shows that $J_{y_0}$ is an
  evaluation datum.  Thus, it is refined by some point
  $\xi_{y_0}\in\be (\lR^2)$.  It is easy to see that $\xi_{y_0} \neq
  \xi_{y_1}$ if $y_0\neq y_1$ (consider a continuous function which is
  constant at zero on the line $y=y_0$, but constant at $1$ on the
  line $y=y_1$).

  Thus, although in the projective plane all pairs of parallel lines
  meet at exactly one point, in $\be(\lR^2)$, every line contains points
  at infinity that are \emph{not} shared by any line parallel to it.
  Also, we can do the same thing with $x<z$ rather than $x>z$ and
  obtain different points, so there are different points at infinity
  in opposite directions on the same line (just as in the
  lower-hemisphere compactification).
\end{eg}

The numbers $A_{y_0}(\ph)$ and $B_{y_0}(\ph)$ defined in the previous example
are sometimes called the \emph{limit inferior} and the \emph{limit
  superior}, respectively, of $\ph(x,y_0)$ as $x\to\infty$, and written
$\liminf_{x\to\infty} \ph(x,y_0)$ and $\limsup_{x\to\infty} \ph(x,y_0)$.
The similar definitions in \autoref{thm:ext-surj} could also be called
limits inferior and superior of a sort.

\section{Final exercises}
\label{sec:ex-constr}

\begin{ex}
  Prove that if the canonical map $X\to \be X$ is surjective, then $X$
  is complete.
\end{ex}

\begin{exstar}
  Prove that if $X\to \be X$ is surjective, then $X$ is totally
  bounded.
\end{exstar}

\begin{ex}
  Prove that $X$ is compact if and only if $X\to \be X$ is
  surjective.  Conclude that compactness is a topological property.
\end{ex}

\begin{ex}
  Prove that $X$ is compact if and only if the gauge $\cG_\be$ is complete.
\end{ex}

\begin{exstar}\label{ex:cpt-prox}
  Prove that in a compact gauge space, we have $A\approx B$ if and
  only if there exists a point $x$ with $x\approx A$ and $x\approx B$.
\end{exstar}

\begin{ex}\label{ex:surj-cpt}
  Prove that if $g:X\to Y$ is a continuous surjection with $X$
  compact, then $Y$ is also compact.
\end{ex}

\begin{ex}
  Prove that any compact gauge space is pseudocompact.
\end{ex}

\begin{ex}
  Prove that any continuous map with compact domain is in fact
  \emph{uniformly} continuous.  Conclude that if two compact gauge
  spaces are topologically isomorphic, then they are uniformly
  isomorphic.
\end{ex}

\bibliographystyle{plain}
\bibliography{all}

\end{document}